\newtheorem{thm}{Theorem}[section]
\theoremstyle{plain}
\newtheorem{lem}[thm]{Lemma}
\newtheorem{prop}[thm]{Proposition}
\newtheorem{prp}[thm]{Proposition}
\newtheorem{cor}[thm]{Corollary}
\theoremstyle{definition}
\newtheorem{dfn}[thm]{Definition}
\newtheorem{example}[thm]{Example}
\theoremstyle{remark}
\newtheorem{rem}[thm]{Remark}
\definecolor{A}{rgb}{.75,1,.75}
\numberwithin{equation}{section}
\newcommand{\A}{\mathcal A}
\newcommand{\Z}{\mathbb Z}
\newcommand{\hf}{\frac12}
\newcommand{\ep}{\epsilon}
\newcommand{\ov}{\overline}
\newcommand{\osp}{\mathfrak{osp}}
\newcommand{\ffpr}{{}'{\bf f}}
\newcommand{\g}{\mathfrak g}
\newcommand{\und}[1]{{#1}^{+}}
\newcommand{\ev}[1]{{#1}_{\bar{0}}}
\newcommand{\od}[1]{{#1}_{\bar{1}}}
  \newcommand{\af}{\alpha}
\newcommand{\gm}{\gamma}
\newcommand{\dt}{\delta}
\newcommand{\te}{\theta}
\newcommand{\ld}{\lambda}
\newcommand{\Gm}{\Gamma}
\newcommand{\Ld}{\Lambda}
\newcommand{\del}{\partial}
\newcommand{\dpm}{\del^{\pm}}
\newcommand{\epm}{\mathbf{e}^\pm}
\newcommand{\bfe}{\mathbf{e}}
\newcommand{\Q}{{\mathbb{Q}}}
\newcommand{\bbk}{{\mathbb{K}}}
\newcommand{\curlyQ}{{\Q(q)^{\pi}}}
\newcommand{\curlyP}{{\mathcal{P}}}
\newcommand{\calQ}{{\mathsf{Q}}}
\newcommand{\Sym}{{\mathcal{S}^-}}
\newcommand{\Symz}{{\mathcal{S}^-_0}}
\newcommand{\Hm}{\mathcal{H}^-}
\newcommand{\NH}{\mathcal{NH}}
\newcommand{\NC}{\mathcal{NC}}
\newcommand{\NHm}{\mathcal{NH}^-}
\newcommand{\NCm}{\mathcal{NC}^{-}}
\newcommand{\fqp}{{\mathbf{f}^\pi}}
\newcommand{\fq}{{\mathbf{f}^+}}
\newcommand{\fqm}{{\mathbf{f}^-}}
\newcommand{\fap}{{}_\A{\mathbf{f}^\pi}}
\newcommand{\fa}{{}_\A{\mathbf{f}^+}}
\renewcommand{\fam}{{}_\A{\mathbf{f}^-}}
\newcommand{\Sl}{{\mathfrak{sl}}}
\newcommand{\Hom}{{\operatorname{Hom}}}
\newcommand{\Homm}{{\operatorname{Hom}}^-}
\newcommand{\HOMm}{{\operatorname{HOM}}^-}
\newcommand{\End}{{\operatorname{End}}}
\newcommand{\Rad}{{\operatorname{Rad}}}
\newcommand{\Mod}{{\operatorname{Mod}}}
\newcommand{\Modm}{{\operatorname{Mod}^-}}
\newcommand{\Rep}{{\operatorname{Rep}}}
\newcommand{\Proj}{{\operatorname{Proj}\,}}
\newcommand{\Repm}{{\operatorname{Rep}^-}}
\newcommand{\Projm}{{\operatorname{Proj}^-}}
\newcommand{\Res}{{\operatorname{Res}}}
\newcommand{\Ind}{{\operatorname{Ind}}}
\newcommand{\height}{{\operatorname{ht}}}
\newcommand{\ch}{{\operatorname{ch}}}
\newcommand{\Qch}{{\operatorname{Ch}}}
\newcommand{\zero}{{\bar{0}}}
\newcommand{\one}{{\bar{1}}}
\newcommand{\ui}{{\underline{i}}}
\newcommand{\uj}{{\underline{j}}}
\newcommand{\uk}{{\underline{k}}}
\newcommand{\ul}{{\underline{l}}}
\newcommand{\lan}{{\langle}}
\newcommand{\ran}{{\rangle}}
\newcommand{\andeqn}{\,\,\,\,\,\, {\mbox{and}} \,\,\,\,\,\,}
\begin{document}

\title
[Categorification of superalgebras]{Categorification of quantum
Kac-Moody superalgebras}
\author[David Hill and Weiqiang Wang]{David Hill and Weiqiang Wang}
\address{Department of Mathematics, University of Virginia, Charlottesville, VA 22904}
\email{deh4n@virginia.edu (D. Hill), \quad ww9c@virginia.edu (W.
Wang)}


\begin{abstract}
We introduce a non-degenerate bilinear form and use it to provide a
new characterization of quantum Kac-Moody superalgebras with no
isotropic odd simple roots. We show that the spin quiver Hecke
algebras introduced by Kang-Kashiwara-Tsuchioka provide a
categorification of half the quantum Kac-Moody superalgebras, using
the recent work of Ellis-Khovanov-Lauda. A new idea here is that a
super sign is categorified as spin (i.e., the parity-shift functor).

\end{abstract} \maketitle

\setcounter{tocdepth}{1}
\tableofcontents

\section{Introduction}

\subsection{Background}

In recent years, quiver Hecke algebras (or, KLR algebras) were
introduced independently by Khovanov-Lauda and Rouquier
\cite{KL1,KL2,Ro1}. These algebras are fundamental in the construction of
Khovanov-Lauda-Rouquier 2-categories--categorical analogues
of quantum Kac-Moody algebras (also see the earlier work \cite{CR}).
The KLR 2-categories and quiver Hecke algebras have implications in
modular representation theory of symmetric groups and Hecke
algebras, low dimensional topology, algebraic geometry, and other areas
\cite{BK2, BKW, BS, CKL, HM, HS, KK, LV, VV, Ro2, Web}; see also the
survey articles \cite{Kl2, Kh1} for more references.

Several years ago, partly motivated by Nazarov's construction of
affine Hecke-Clifford algebras \cite{Naz}, the second author
\cite{Wa} introduced {\em spin Hecke algebras} and studied them in a
series of papers with Khongsap starting in \cite{KW}. The spin Hecke
algebras are associated to spin Weyl groups, and they afford many
variations (e.g., affine, double affine, degenerate, nil, etc.). A
distinct new feature in \cite{Wa, KW} is the
appearance of the skew-polynomial algebra as a subalgebra of the
spin Hecke algebra, in contrast to the polynomial algebra for the
stardard affine/double Hecke algebras. The spin Hecke algebras are
naturally superalgebras, and are Morita super-equivalent to
Hecke-Clifford algebras (though, we prefer to suppress the prefix
``super" for such associative algebras in contrast to Lie
superalgebras). A straightforward modification of the  spin Hecke
algebra is the spin nilHecke algebra, which has recently been
rediscovered and studied in depth in  \cite{EKL} for the affine type
$A$ case.

Almost as recently, Kang-Kashiwara-Tsuchioka \cite{KKT}  utilized
the spin nilHecke algebra to generalize the KLR construction to
several new families of algebras, including the \emph{spin quiver
Hecke algebras} (called ``quiver Hecke superalgebras'' in \emph{loc.
cit.}) and quiver Hecke-Clifford algebras, starting from a
generalized Cartan matrix (GCM) $A$ parametrized by an index set
$I=I_\zero \cup I_\one$ subject to some natural conditions (see
\S\ref{subsec:GCM}). Roughly speaking, to each $i\in I_\zero$ is
attached the usual nilHecke algebra and to each $i\in I_\one$ is
attached the spin nilHecke algebra; when $I_\one =\emptyset$, the
KKT construction reduces to the original KLR construction. It is
suggested in \cite{KKT} that these new algebras can be used to
categorify the quantum Kac-Moody algebra associated to the GCM $A$
with the $\Z_2$-parity forgotten (denoted by $\und A$ in this
paper). Their expectation was partly motivated by \cite{BK} where
affine Kac-Moody algebra of type $A_{2\ell}^{(2)}$ arises (also cf.
\cite{Ts}).

\subsection{What to categorify?}

It has been known much earlier \cite{Kac} that a Kac-Moody
superalgebra can be associated to a GCM $A$ exactly as specified in
\S\ref{subsec:GCM}. This class of Kac-Moody superalgebras is
distinguished among the Lie superalgebras in the following sense:
the odd simple roots are all non-isotropic, the notion of integrable
modules is defined as usual, and the super Weyl-Kac character
formula for integrable modules holds. Note however that the only
finite-dimensional simple Lie superalgebras in this class, which are
not Lie algebras,  are $\mathfrak{osp}(1|2n)$. This class of Lie
superalgebras and beyond have been quantized in \cite{Ya, BKM}, and
a generalization of Lusztig's theorem \cite{Lu1} on deforming
integrable modules was obtained in \cite{BKM}.

We propose a connection between the spin quiver Hecke algebras and
quantum Kac-Moody superalgebras, despite the major difficulty caused
by additional signs appearing in the superalgebras. These signs show
up in the super quantum integers \eqref{eq:pi=-1}, which may
specialize to zero if one naively takes $q\mapsto 1$, and also
appear in the super quantum Serre relations \eqref{eq:superserre}.
Another major conceptual obstacle is that no canonical basis \`a la
Lusztig-Kashiwara has ever been constructed (even conjecturally) for
superalgebras,  in spite of various works generalizing the
corresponding crystal basis theory.

\subsection{The main results}

In this paper, we introduce a twisted bialgebra $\fap$, called the {\em
(quantum) covering Kac-Moody algebra}, with an additional parameter
$\pi$ satisfiying $\pi^2=1$. This algebra is associated to the GCM
$A$ with an extra natural condition (C6) in \S\ref{subsec:C6} which
is assumed throughout this paper unless otherwise specified.

The assumption (C6) plays an essential role in the paper. First, we
introduce an apparently novel bar involution on $\fap$, which is the
identity on the Chevalley generators: $\bar{\te_i}=\te_i$ ($i\in
I$), as usual, but $\bar{q}=\pi q^{-1}.$ Assumption (C6) guarantees
that the quantum integers in $\fap$, and therefore the divided
powers, $\te_i^{(a)}$ ($i\in I$), are bar invariant. Additionally,
following Rouquier \cite{Ro1}, we define a family of (skew-)
polynomials $\mathsf{Q}=(\mathsf{Q}_{ij}(u,v))_{i,j\in I}$ from a
\emph{quiver with compatible automorphism} and show  in
Lemma~\ref{L:Qconditions} that assumption (C6) implies these
polynomials satisfy the necessary conditions to construct an
associated spin quiver Hecke algebra, \cite[(3.1)]{KKT}.

The two specializations, $\pi\mapsto 1$ and $\pi\mapsto-1$, of
$\fap$ become half of the Kac-Moody algebra associated to $A^+$ and
half of the Kac-Moody superalgebra associated to $A$, respectively.
The main result of this paper is that the spin quiver Hecke algebras
defined by the family of polynomials, $\mathsf{Q}$, naturally
categorify the algebra $\fap$ and, consequently, we obtain a
categorification of halves of the corresponding Kac-Moody bialgebra
and bisuperalgebra simultaneously.

Our key new idea in this paper is to distinguish two types of signs
occuring in (quantum) Kac-Moody superalgebras. Signs common to Lie
algebras and superalgebras, or ordinary signs, are denoted by $(-1)$
as usual, while  the signs arising from exchanges of odd elements
are replaced by the parameter $\pi$.  The ordinary signs are
categorified using complexes as usual following \cite{KL1, KL2,
Ro1}, while $\pi$ is the shadow of a parity-shift functor (called
\emph{spin}). Just as the parameter $q$ is categorified by an
integer grading shift, $\pi$ is categorified by a spin. Forgetting
the spin corresponds to the specialization $\pi\mapsto1$, and in
this way one ends up with the usual quantum Kac-Moody algebras (as
suggested in \cite{KKT}).

We introduce a bilinear form on the  algebra $\fap$ (which
specializes to a form on the superalgebra $\fam$) and establish its
non-degeneracy, following \cite{Lu}. With this in place, the
necessary categorical constructions can be obtained within the
framework of \cite{KL1, KL2} using the spin quiver Hecke algebras.
In the process we find the detailed structures of spin nilHecke
algebras worked out in \cite{EKL} handy to use.  As a consequence of
our categorification, we prove a conjecture of \cite{KKT} that all
simple modules of spin quiver Hecke algebras are of type $\texttt M$
(that is, they remain simple with the $\Z_2$-grading forgotten).

In the simplest case when $I =I_\one$ consists of an (odd)
singleton, the spin quiver Hecke algebra reduces to the spin
nilHecke algebra of the second author. In this case, our assertion
is that the spin nilHecke algebras categorify half of the quantum
$\mathfrak{osp}(1|2)$ (which is new) as well as half of the quantum
$\mathfrak{sl}(2)$ (which was already proved in \cite{EKL}; see also
\cite{KKT}).

\subsection{Future work}

The  ideas of this paper are expected to have several ramifications.
The results here can be rephrased in terms of $2$-Kac-Moody
superalgebras in the sense of \cite{Ro1, Ro2}. One can also imitate
\cite{Web} to formulate a (conjectural) categorification of tensor
products of integrable modules of quantum Kac-Moody superalgebras.
Following the algebraic construction of \cite{KK} it should allow
one to show that the cyclotomic spin quiver Hecke algebras
categorify the integrable modules of the quantum Kac-Moody
(super)algebras. The idea here can also be combined with \cite{KOP}
to categorify the more general quantum Borcherds  superalgebras and
their integrable modules studied in \cite{BKM}.

Another main point of this paper is the introduction for the first
time of a bar-involution on quantum superalgebras such that
$$
\bar{q}=- q^{-1},
$$
and the assumption (C6) is again  perfect for this purpose. A
remarkable property of this bar-involution is its compatibility with
the categorification. The canonical basis for the modified or
idempotented quantum $\mathfrak{osp}(1|2)$ is constructed in
\cite{CW}. In a work in preparation joint with Sean Clark, we are
undertaking a construction of the canonical bases for quantum
Kac-Moody superalgebras. It will be interesting to compare these
canonical bases with those coming from categorification.

We hope that our work helps to clarify the right framework for
categorifying the odd Khovanov homology (cf. \cite{EKL} and
references therein). A new idea was suggested in \cite{Kh2} on how
to categorify a superalgebra with an isotropic odd simple root. It
is natural to expect, though remains highly non-trivial, that the
categorification of the more general Kac-Moody superalgebras will
have to combine all these ideas of categorifying the even simple
roots, the isotropic odd simple roots, and the non-isotropic odd
simple roots.

\subsection{Organization}

The layout of this paper is as follows.  In Section \ref{S:Root
Data} we collect the necessary Lie theoretic data. In Section
\ref{S:QKM} we define the covering Kac-Moody algebra, realize it as
the quotient of a free algebra by the radical of a bilinear form,
and define a new bar involution. In Section \ref{S:sqHa} we recall
the definition of the spin quiver Hecke algeba and describe some of
its basic properties. In Section \ref{sec:Representations} we
introduce the category of finitely generated graded projective
modules over the spin quiver Hecke algebra that will categorify the
covering Kac-Moody algebra, and then establish the categorical Serre
relations. From these results, we deduce in Section~
\ref{S:Categorification} the categorification of the covering
Kac-Moody algebra.

\bigskip

{\bf Conventions.} A module over a superalgebra  $R$ in this paper
is understood as a \emph{left} module with $\Z_2$-grading compatible
with that of $R$.

\bigskip

{\bf Acknowledgement.} We are indebted to the authors of \cite{KKT,
EKL}, as this paper grew out of our readings of their papers and
relies heavily on their results. We thank Sean Clark for stimulating
discussions and collaborations. The second author is grateful to
Jinkui Wan for her collaboration in 2009 when we were motivated by
\cite{Naz, BK, Wa, KW} to make an unsuccessful attempt to construct
the spin/Clifford generalization of the KLR algebras. The research
of the second author is partially supported by NSF grant
DMS-1101268.

After the completion of this paper, Kang-Kashiwara-Oh \cite{KKO} used \cite{KKT} to categorify the Kac-Moody algebra $\g=\g(A^+)$ and its integrable representations following \cite{KK}, but the connection with Kac-Moody superalgebras was not pursued.

\section{Root data}\label{S:Root Data}

\subsection{Generalized Cartan matrices} \label{subsec:GCM}

Let $I=I_\zero \cup I_\one$ be a $\Z_2$-graded finite set of size
$\ell$. Let $A=(a_{ij})_{i,j\in I}$ be a generalized Cartan matrix
(GCM) such that
\begin{enumerate}
\item[(C1)] $a_{ii} =2$, for all $i\in I$;

\item[(C2)] $a_{ij} \in \Z_{\leq0}$, for $i\neq j \in I$;

\item[(C3)] $a_{ij}=0$ if and only if $a_{ji}=0$;

\item[(C4)] $a_{ij} \in 2\Z$, for all $i\in I_\one$ and all $j\in I$;

\item[(C5)]
there exists an invertible matrix $D =\text{diag}(s_1,\ldots,
s_r)$ with $DA$ symmetric.
\end{enumerate}
We further assume $s_i \in \Z_{>0}$ and $\gcd
(s_1,\ldots, s_r)=1$. Introduce the parity function $p(i)=0$ for
$i\in I_\zero$ and $p(i)=1$ for $i\in I_\one$.

Condition (C4) was imposed first in \cite{Kac} so that the corresponding Kac-Moody superalgebras
 possess similar favorable properties as $\mathfrak{osp}(1|2n)$, i.e., the odd simple roots are all non-isotropic
and the Weyl-Kac character formula for integrable modules expressed in terms of the Weyl group holds.

\subsection{The assumption (C6)}   \label{subsec:C6}

We will impose an additional condition on a GCM $A$ introduced in
\S\ref{subsec:GCM} for the Kac-Moody superalgebras considered in
this paper:

\vspace{.2cm}
 (C6) $I_\one\neq\emptyset$, and the integer $s_i$ is odd
if and only if $i$ is odd (i.e., $i\in I_\one$). \vspace{.2cm}

The case $I_\one=\emptyset$ has been well studied, and we
have nothing new to add.

There is precisely one Kac-Moody superalgebra of finite type satisfying (C1)--(C6). Namely, $\mathfrak{osp}(1|2n)$ (or, $B(0,n)$):
$$\xy
(-25,0)*{\bullet};(-15,0)*{\circ}**\dir{=};(-5,0)*{\circ}**\dir{-};
(0,0)*{\cdots};(-20,0)*{<};
(5,0)*{\circ};(15,0)*{\circ}**\dir{-};(25,0)*{\circ}**\dir{-};
\endxy$$
In Table 1 below, we list the affine Dynkin diagrams
satisfying the parity assumption (C6). The nodes labeled by
$I_\zero$ are drawn as hollow circles $\circ$; the nodes labeled
by $I_\one$ are drawn as solid dots $\bullet$. A complete list of affine Lie superalgebras and Dynkin diagrams can be found in \cite{vdL}
and we observe that there is exactly one family of affine superalgebras excluded by (C6).

\begin{table}[ht]\label{T:Dynkin}
 \caption{Affine Dynkin diagrams satisfying (C1)-(C6)}
\begin{center}
\begin{tabular}{|c|c||c|c|}\hline
$B^{(1)}(0,n)$&
$$ \xy
(-25,0)*{\bullet};(-15,0)*{\circ}**\dir{=};(-5,0)*{\circ}**\dir{-};
(0,0)*{\cdots};(-20,0)*{<};
(5,0)*{\circ};(15,0)*{\circ}**\dir{-};(25,0)*{\circ}**\dir{=};(20,0)*{<};
\endxy$$
&
$B^{(1)}(0,1)$&
$$\xy
(-5,0)*{\bullet};(5,0)*{\circ}**\dir2{=};(0,0)*{<};(-5,1)*{};(5,1)*{}**\dir{-};
(-5,-1)*{};(5,-1)*{}**\dir{-};
\endxy$$
\\\hline
$A^{(2)}(0,2n-1)$&
$$\xy
(-25,0)*{\bullet};(-15,0)*{\circ}**\dir{=};(-5,0)*{\circ}**\dir{-};
(0,0)*{\cdots};(-20,0)*{<};
(5,0)*{\circ};(15,0)*{\circ}**\dir{-};(22,5)*{\circ}**\dir{-};(15,0)*{\circ};(22,-5)*{\circ}**\dir{-};
\endxy$$
&
$A^{(2)}(0,3)$&
$$\xy
(-10,0)*{\circ};(0,0)*{\bullet}**\dir{=};(10,0)*{\circ}**\dir{=};
(-5,0)*{>};(5,0)*{<};
\endxy$$
\\\hline
$C^{(2)}(n+1)$&
$$ \xy
(-25,0)*{\bullet};(-15,0)*{\circ}**\dir{=};(-5,0)*{\circ}**\dir{-};
(0,0)*{\cdots};(-20,0)*{<};
(5,0)*{\circ};(15,0)*{\circ}**\dir{-};(25,0)*{\bullet}**\dir{=};(20,0)*{>};
\endxy$$
&
$C^{(2)}(2)$&
$$\xy
(-5,0)*{\bullet};(5,0)*{\bullet}**\dir{=};(-1,0)*{<};(1,0)*{>};
\endxy$$
\\\hline
\end{tabular}
\end{center}
\end{table}

\subsection{Quivers with compatible automorphism}\label{subsec:Qmatrix}

Let $\bbk$ be a field, $\mathrm{char}\bbk\neq2$. We continue to work
under the assumptions of $\S$\ref{subsec:GCM} and
$\S$\ref{subsec:C6} throughout the paper.

Let $\widetilde{\Gm}$ be a graph without loops. We construct a Dynkin diagram
$\Gm$ by giving $\widetilde{\Gm}$ the structure of a \emph{graph
with compatible automorphism} in the sense of \cite[$\S12,14$]{Lu}.
To define the quiver Hecke algebra, we will use the notion of a
\emph{quiver with compatible automorphism} as described in
\cite[$\S3.2.4$]{Ro1}.

Let $\widetilde{I}$ be the labelling set for $\widetilde{\Gm}$, and
$\widetilde{H}$ be the (multi)set of edges. An automorphism
$a:\widetilde{\Gm}\to\widetilde{\Gm}$ is said to be
\emph{compatible} with $\widetilde{\Gm}$ if, whenever $(i,j)\in
\widetilde{H}$ is an edge, $i$ is not in the orbit of $j$ under $a$
(so the quotient graph has no loops).

Fix a compatible automorphism $a:\widetilde{\Gm}\to\widetilde{\Gm}$,
and set $I$ to be a set of representatives of the obits of
$\widetilde{I}$ under $a$ and let $\Gm=\widetilde{\Gm}/a$ be the
corresponding diagram with nodes labeled by $I$. For each $i\in I$,
let $\af_i\in\widetilde{I}/a$ be the corresponding orbit. For $i,
j\in I$ with $i\neq j$, let
\begin{align}
\label{E:SymmetrizingConst}s_i&=|\af_i|,
 \\
\label{E:afiafi}(\af_i,\af_i)&=2s_i,
 \\
\label{E:afiafj}
(\af_i,\af_j)&=-|\{(i',j')\in\tilde{H}\,|\,i'\in\af_i,j'\in\af_j\}|.
\end{align}
For all $i,j\in I$, let $a_{ij}=(\af_i,\af_j)/s_i$. Then, by
\cite[Proposition~ 14.1.2]{Lu} $A=(a_{ij})_{i,j\in I}$ is a GCM and every GCM arises in this way. Moreover, the symmetrizing constants
$s_i$ are nonnegative integers by definition, and we may assume that $\gcd\{ s_i | i\in I \}=1$ (otherwise, let $\ell=\mathrm{lcm}\{s_i\mid i\in I\}/\gcd\{s_i\mid i\in I\}$, and repeat the construction above with $\widetilde{\Gm}/a^\ell$ instead). Define a $\Z_2$-grading on $I$ by setting
$I_\zero=\{i\in I\mid |\af_i|\in2\Z\}$ and $I_\one=I\backslash I_\zero$. Then, (C6) is satisfied. Among the diagrams obtained from this construction, we will work only with those satisfying (C4).

Then, $A$ is a GCM as in $\S$\ref{subsec:GCM}, and $\Gm$ is its
Dynkin diagram. We additionally have the data:
\begin{align}
\label{E:SimpleRoots}\mbox{Simple roots: }&  \{\af_i|i\in I\}\\
\label{E:RootLattice}\mbox{Root lattice: }
 & Q=\bigoplus_{i\in I}\Z\af_i,\;Q^+=\bigoplus_{i\in I}\Z_{\geq0}\af_i;\\
\label{E:RootPairing}\mbox{Bilinear pairing:  }& (\cdot,\cdot):Q\times
Q\longrightarrow \Z.
\end{align}

Assume further that $\widetilde{\Gm}$ is a quiver. That is, we have
a pair of maps $s:\widetilde{H}\to\widetilde{I}$ and
$t:\widetilde{H}\to\widetilde{I}$ (the source and the target). We
assume that $a$ is compatible with the quiver structure in the
sense that it is equivariant with respect to the source and target
maps: $s(a(h))=a(s(h))$ and $t(a(h))=a(t(h))$ for all
$h\in\widetilde{H}$.

Set
\begin{align}
d_{ij}=\bigg|\{h\in\widetilde{H}\,|\,s(h)\in\af_i\mbox{ and }t(h)\in\af_j\}/a\bigg|
\end{align}
and let $m(i,j)=\mathrm{lcm}\{(\af_i,\af_i),(\af_j,\af_j)\}
 =2s_is_j/\gcd(s_i,s_j)$. As noted in \cite{Ro1},
\begin{align}\label{E:dij+dji}
d_{ij}+d_{ji}=-2(\af_i,\af_j)/m(i,j).
\end{align}

Following \cite[$\S$3.1]{KKT}, for $i,j\in I$, define a ring
$$
\bbk_{ij}\{u,v\}=\bbk\lan u,v \ran/\lan uv-(-1)^{p(i)p(j)}vu\ran.
$$
The data above defines a matrix $\calQ=(\calQ_{ij}(u,v))_{i,j\in
I}$, see \cite{Ro1}. Each $\calQ_{ij}(u,v)\in\bbk_{ij}\{u,v\}$, and
the (skew-)polynomial entries in $\calQ$  are defined by
$\calQ_{ii}(u,v)=0$, and for $i\neq j$,
\begin{align}\label{E:Q polynomials}
\calQ_{ij}(u,v)=(-1)^{d_{ij}}
\big(u^{m(i,j)/(\af_i,\af_i)}-v^{m(i,j)/(\af_j,\af_j)}
\big)^{-2(\af_i,\af_j)/m(i,j)}.
\end{align}

\begin{lem}\label{L:Qconditions} For $i, j\in I$, we have
\begin{enumerate}
\item[(a)] $\calQ_{ij}(u,v)\in\bbk_{ij}\{u,v\}$,
\item[(b)] $\calQ_{ii}(u,v)=0$,
\item[(c)] $\calQ_{ij}(u,v)=\calQ_{ji}(v,u)$,
\item[(d)] $\calQ_{ij}(-u,v)=\calQ_{ij}(u,v)$ whenever $i\in I_\one$.
\end{enumerate}
In particular, the entries in $\calQ$ satisfy the properties in \cite[(3.1)]{KKT}.
\end{lem}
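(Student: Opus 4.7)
The plan is to dispatch (a) and (b) by inspection, then handle (c) via the centrality of $-1$, and finally reduce (d) to a parity computation forced by (C4) and (C6).

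Parts (a) and (b) follow directly from the definition \eqref{E:Q polynomials}. For (b), $\calQ_{ii}$ is declared to be $0$. For (a), writing $a=m(i,j)/(\af_i,\af_i)=s_j/\gcd(s_i,s_j)$, $b=m(i,j)/(\af_j,\af_j)=s_i/\gcd(s_i,s_j)$, and $N=-2(\af_i,\af_j)/m(i,j)=d_{ij}+d_{ji}$ (using \eqref{E:dij+dji}), all three exponents are non-negative integers, so $(u^a-v^b)^N$ is a legitimate element of $\bbk_{ij}\{u,v\}$.

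For (c), the scalar $-1$ is central in $\bbk_{ij}\{u,v\}$, so $(v^b-u^a)^N=(-1)^N(u^a-v^b)^N$. Since $N=d_{ij}+d_{ji}$, we have $(-1)^{d_{ji}+N}=(-1)^{d_{ij}+2d_{ji}}=(-1)^{d_{ij}}$, and the identity $\calQ_{ji}(v,u)=\calQ_{ij}(u,v)$ drops out.

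For (d), assume $i\in I_\one$, so by (C6), $s_i$ is odd, hence $\gcd(s_i,s_j)$ is odd. If $j\in I_\zero$, then (C6) gives $s_j$ even, whence $a=s_j/\gcd(s_i,s_j)$ is even, so $(-u)^a=u^a$ and the claim is trivial. The only substantive case is $j\in I_\one$: then $s_j$ is odd, $a$ and $b$ are both odd, and $u,v$ anti-commute in $\bbk_{ij}\{u,v\}$. The first thing to verify is that $N$ is even. Rewriting $N=-a_{ij}\gcd(s_i,s_j)/s_j=-a_{ij}/b$ with $b$ odd, integrality of $N$ forces $b\mid a_{ij}$, and since $a_{ij}\in 2\Z$ by (C4), we conclude that $N$ is even. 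Next, using $u^av^b=-v^bu^a$, the cross terms cancel in
\[
(u^a+v^b)^2=u^{2a}+v^{2b}=(u^a-v^b)^2,
\]
and this common square is central in $\bbk_{ij}\{u,v\}$ (because $u^{2a}$ and $v^{2b}$ commute with both $u$ and $v$). Iterating with the even integer $N$,
\[
(u^a+v^b)^N=(u^{2a}+v^{2b})^{N/2}=(u^a-v^b)^N.
\]
Since $((-u)^a-v^b)^N=(-1)^N(u^a+v^b)^N$ and $(-1)^N=1$, we obtain $\calQ_{ij}(-u,v)=\calQ_{ij}(u,v)$.

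The main obstacle is precisely the last case of (d): one needs the joint force of (C4) and (C6) to guarantee $N$ is even, and one must handle the skew commutation of $u^a$ and $v^b$ carefully. The remaining assertions, as well as the concluding remark that these $\calQ_{ij}$ satisfy the conditions of \cite[(3.1)]{KKT}, then follow at once.
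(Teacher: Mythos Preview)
Your proof is correct and follows essentially the same approach as the paper's, which likewise dispatches (a)--(c) by inspection and splits (d) into the case $j\in I_\zero$ (where $a$ is even) and the case $j\in I_\one$ (where $a,b$ are odd, $N$ is even by (C4) and (C6), and one reduces to $(u^a-v^b)^{2c}=(u^{2a}+v^{2b})^c$). One minor slip: you write $N=-a_{ij}\gcd(s_i,s_j)/s_j=-a_{ij}/b$, but since $a=s_j/\gcd(s_i,s_j)$ this quantity is actually $-a_{ij}/a$; as $a$ is also odd in the case at hand, your parity conclusion is unaffected.
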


\begin{proof}
Properties (a), (b), and (c) are clear by definition.

To prove (d), first assume $j\in I_\zero$. Note that by assumption
(C6) $s_i$ is odd and $s_j$ is even. Therefore,
$m(i,j)/(\af_i,\af_i)=s_j/\gcd(s_i,s_j)$ is even, proving the lemma
in this case.

Next, assume $j\in I_\one$. In this case, both
$m(i,j)/(\af_i,\af_i)=s_j/\gcd(s_i,s_j)$ and
$m(i,j)/(\af_j,\af_j)=s_i/\gcd(s_i,s_j)$ are odd by assumption (C6).
However, $$-2(\af_i,\af_j)/m(i,j)=a_{ij}\gcd(s_i,s_j)/s_j$$ is even,
since $a_{ij}$ is even by (C4) and both $s_i$ and $s_j$ are odd
(again, by (C6)). Since $uv=-vu\in\bbk_{ij}\{u,v\}$,
$$(u^a-v^b)^{2c}=(u^{2a}+v^{2b})^c$$ whenever $a$ and $b$ are both
odd, the result follows.
\end{proof}

\section{Quantum superalgebras and bilinear forms}\label{S:QKM}

\subsection{Kac-Moody superalgebras}   \label{subsec:superKM}

Associated to a GCM $A$ as in $\S$\ref{subsec:GCM} and
$\S$\ref{subsec:C6} is a Kac-Moody superalgebra $\g =\g(A)$ (see
\cite{Kac}), and a quantized enveloping superalgebra (see
\cite{BKM}). Let $\fqm$ be the super analogue of Lusztig's algebra
$\mathbf{f}$, generated over $\Q(q)$ by $\{\te_i|i\in I\}$ and
subject to a signed analogue of Serre relations (cf.
\eqref{eq:superserre} below). The $\te_i$ parameterized by $i\in
I_\zero$ (respectively, $i\in I_\one$) are {\em even} (respectively,
{\em odd}). For $k \ge 0$ and $i\in I$, we shall denote by
$\te_i^{(k)} =\te_i^k/[k]_i^-!$ the divided powers, where
\begin{align}
q_i =q^{s_i}, \qquad [k]_i^- =\frac{(- 1)^{kp(i)} q_i^k
-q_i^{-k}}{(- 1)^{p(i)} q_i -q_i^{-1}}, \qquad [k]_i^-!
=\prod_{a=1}^k [a]_i^-. \label{eq:pi=-1}
\end{align}
We shall denote by $\A=\Z[q,q^{-1}]\subset\Q(q)$ and by $\fam$ the
$\A$-subalgebra of $\fqm$ generated by all divided powers
$\te_i^{(k)}$, for $k\ge 1, i\in I$. Besides the standard relation
among divided powers, the signed Serre relations in $\fam$ can be
written as:
\begin{align}
\sum_{k=0}^{1-a_{ij}} (-1)^{k+{p(k;i,j)}} \te_i^{(1-a_{ij}-k)} \te_j
\te_i^{(k)} &=0,
 \label{eq:superserre}
\end{align}
where
\begin{align}\label{E:pkij}
p(k;i,j) ={kp(i)p(j)+\hf k(k-1)p(i)}.
\end{align}

\begin{dfn}
Assume a GCM $A$ satisfies (C1)-(C6). We define a bar-involution
$\bar{\quad}:\fqm\to\fqm$ by letting
\begin{equation}  \label{eq:superbar}
\ov{q} =- q^{-1}, \qquad \ov{\te_i} =\te_i \quad (\forall i\in I).
\end{equation}
\end{dfn}
Note that all the divided powers $\te_i^{(k)}$ are bar-invariant
under the assumption (C6), and hence we have
$\bar{\quad}:\fam\to\fam$.

\begin{rem}
With respect to this apparently new bar-involution, we will develop
a theory of canonical basis in a forthcoming work with Sean Clark.
\end{rem}

\subsection{Kac-Moody algebras}\label{subsec:KM}

If we forget the parity on $I$ in \ref{subsec:GCM}, we shall
write the corresponding  GCM matrix by $\und A$.
Associated to the GCM $\und A$ is the usual Kac-Moody algebra $\und
\g =\g(\und A)$, and write $\fq$ for Luszig's algebra $\mathbf{f}$.
For $k \ge 0$ and $i\in I$, we shall abuse notation slightly
and denote by $\te_i^{(k)} =\te_i^k/[k]_i^+!$ the divided powers as
before, where now
\begin{align}
[k]_i^+ =\frac{(+ 1)^{kp_i} q_i^k
-q_i^{-k}}{(+ 1)^{p(i)} q_i -q_i^{-1}}, \qquad [k]_i^+! =\prod_{a=1}^k
[a]_i^+.
\label{eq:pi=1}
\end{align}
Let $\fa$ be the $\A$-subalgebra of $\fq$ generated by all divided
powers $\te_i^{(k)}$, for $k\ge 1$ and $i\in I$, subject to the
usual Serre relations, which we suggestively write as:
\begin{align}
\sum_{k=0}^{1-a_{ij}} (-1)^k (+1)^{p(k;i,j)} \te_i^{(1-a_{ij}-k)}
\te_j \te_i^{(k)} &=0. \label{eq:serre+}
\end{align}

\subsection{Covering Kac-Moody algebras}\label{subsec:KMcover}

Finally, we present a common framework to describe the presentations
in $\S$\ref{subsec:superKM} and $\S$\ref{subsec:KM}, and in the
process justify our seemingly inconsistent use of notation therein.
To this end, fix an indeterminant $\pi$, and for a ring $R$, we
introduce a new ring
$$
R^\pi=R[\pi]/(\pi^2-1).
$$

Denote by   $\ffpr^\pi =\ev\ffpr^\pi \oplus \od \ffpr^\pi$ the free
associative algebra over $\curlyQ$ generated by even generators
$\theta_i$ for $i\in \ev I$ and odd generators $\theta_i$ for $i\in
\od I$. We have parity $p(x)=0$ for $x\in \ev\ffpr^\pi$ and
$p(x)=1$ for $x\in \od\ffpr^\pi$. Letting the weight of $\theta_i$
be $\af_i\in Q^+$, the algebra $\ffpr^\pi$ has an induced weight
space decomposition $\ffpr^\pi =\oplus_{\nu \in Q^+} \ffpr_\nu^\pi$.
For $x\in \ffpr^\pi_\nu$, we set $|x| =\nu$.

For $a \ge t \ge 0$ and $i\in I$, we shall denote
\begin{align*}
q_i =q^{s_i}, \quad \pi_i=\pi^{p(i)},
\end{align*}
and
\begin{align}
[a]_i =\frac{\pi_i^{ a} q_i^a
-q_i^{-a}}{\pi_i q_i -q_i^{-1}}, \quad [a]_i! =\prod_{k=1}^a
[k]_i,
 \quad
\left[ \begin{matrix}a\\t\end{matrix}\right]_i
=\frac{[a]_i!}{[t]_i![a-t]_i!}.
\label{eq:pi}
\end{align}
We denote by $\theta_i^{(a)} =\theta_i^a/[a]_i!$ the divided
powers.

Define an algebra homomorphism $r:\ffpr^\pi \rightarrow \ffpr^\pi \otimes
\ffpr^\pi$ by letting
$$
r(\theta_i) =\theta_i\otimes 1 +1\otimes \theta_i, \qquad \forall
i\in I.
$$
The algebra structure on $\ffpr^{\pi} \otimes \ffpr^{\pi}$ is given by
\begin{equation} \label{eq:prod}
(x_1\otimes x_2)(x_1' \otimes x_2') =\pi^{p(x_2) p(x_1')}
q^{-(|x_2|, |x_1|)} x_1 x_1' \otimes x_2 x_2',
\end{equation}
for homogeneous $x_1,x_2, x_1',x_2' \in\ffpr^{\pi}$, where
$(|x_2|,|x_1'|)$ is defined in \eqref{E:RootPairing}. In particular,
we note that
\begin{equation} \label{eq:piq2}
(1\otimes \theta_i) \cdot (\theta_i\otimes 1) =\pi_i
q_i^{-2}(\theta_i\otimes 1) \cdot (1\otimes \theta_i).
\end{equation}

The following is a super analogue of \cite[Proposition~1.2.3]{Lu}.
Note that the $v$ in Lusztig corresponds to our $q^{-1}$. Though the
identities below look almost identical to those in {\em loc. cit.},
we give a detailed proof where some super signs show up.

\begin{prop}  \label{prop:bform}
There exists a unique bilinear form $(\cdot,\cdot)$ on $\ffpr^\pi$
with values in $\curlyQ$ such that $(1,1)=1$,
\begin{enumerate}
\item[(a)]
$(\theta_i, \theta_j) = \delta_{ij} (1-\pi_i q_i^{2})^{-1} \quad
(\forall i,j\in I),$

\item[(b)]
$(x, y'y'') = (r(x), y'\otimes y'') \quad (\forall x,y',y'' \in
\ffpr^{\pi}),$

\item[(c)]
$(xx', y'') = (x'\otimes x'', r(y'')) \quad (\forall x',x'',y'' \in
\ffpr^{\pi}),$

\item[(d)]
the induced bilinear form  $(\ffpr^{\pi} \otimes \ffpr^{\pi})
\times (\ffpr^{\pi} \otimes \ffpr^{\pi}) \rightarrow \Q(q)$ is given
by
\begin{equation} \label{eq:tensorform}
(x'\otimes x'', y' \otimes y'') := (x',y')(x'', y'').
\end{equation}
\end{enumerate}
Moreover, the bilinear form $(\cdot,\cdot)$ is symmetric.
\end{prop}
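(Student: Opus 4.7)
The plan is to adapt Lusztig's strategy from \cite[Proposition~1.2.3]{Lu} to the super/$\pi$ setting, with the main additional work being careful bookkeeping of the twist $\pi^{p(x_2)p(x_1')}q^{-(|x_2|,|x_1'|)}$ from \eqref{eq:prod}. Weight-matching will let me decompose everything along the grading $\ffpr^\pi=\oplus_\nu\ffpr^\pi_\nu$, so I may work weight space by weight space.

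For uniqueness, note that (d) together with the coproduct forces $(\ffpr^\pi_\mu,\ffpr^\pi_\nu)=0$ for $\mu\neq\nu$. Writing any $y\in\ffpr^\pi$ as a $\curlyQ$-linear combination of monomials $\theta_{i_1}\cdots\theta_{i_n}$ and iterating (b), one reduces any pairing to
\[
(x,\theta_{i_1}\cdots\theta_{i_n})=(r^{(n-1)}(x),\theta_{i_1}\otimes\cdots\otimes\theta_{i_n}),
\]
where $r^{(n-1)}$ is the iterated coproduct; the right-hand side is then determined by (a), (d), and bilinearity.

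For existence, I would follow the skew-derivation/adjoint approach. For each $i\in I$, define ${}_ir,r_i:\ffpr^\pi\to\ffpr^\pi$ as the coefficients of $\theta_i\otimes(-)$ and $(-)\otimes\theta_i$ in $r(x)$ modulo lower-degree factors; the algebra structure \eqref{eq:prod} forces Leibniz-type identities of the form
\[
r_i(xy)=r_i(x)\,y+\pi^{p(i)p(x)}q^{-(\af_i,|x|)}\,x\,r_i(y),
\]
with an analogous rule for ${}_ir$. Then define $(\cdot,\cdot)$ recursively on weight by $(1,1)=1$ and
\[
(\theta_i y,z)=(1-\pi_iq_i^2)^{-1}\,(y,{}_ir(z)),
\]
and verify (a)--(d). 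The delicate point is showing this is well-defined, i.e.\ that peeling off a generator from the left versus the right of a monomial produces the same scalar; this reduces to a super-commutation identity between ${}_ir$ and $r_j$ that follows from coassociativity of $r$, with the $\pi$- and $q$-signs from \eqref{eq:prod} and \eqref{eq:piq2} landing precisely where they must.

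For symmetry, I would induct on $|x|+|y|$, with the base case handled by $(1,1)=1$. In the inductive step, weight-matching reduces to $|x|=|y|$; factoring $x=\theta_ix_0$, apply (c) to rewrite $(x,y)$ in terms of $r(y)$, expand using (d), and then invoke the induction hypothesis on the strictly smaller-weight pairings that appear. Running the dual computation on $(y,x)$ via (b) and matching term by term gives symmetry, and here again the twist $\pi^{p(x_2)p(x_1')}q^{-(|x_2|,|x_1'|)}$ in \eqref{eq:prod} is exactly what is needed for the two expressions to coincide. The hard part will be the consistency check in the existence step: each transposition of homogeneous tensor factors contributes a $\pi^{p(\cdot)p(\cdot)}q^{-(|\cdot|,|\cdot|)}$ factor, and an audit of every such contribution is required to confirm that they cancel in the right way to make the inductive definition of $(\cdot,\cdot)$ unambiguous.
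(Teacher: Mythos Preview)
Your proposal is correct in outline, but the existence step takes a genuinely different route from the paper.

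The paper follows Lusztig's dual-algebra construction rather than your skew-derivation recursion. Concretely, one dualizes the coproduct $r$ to put an associative algebra structure on $\ffpr^{\pi*}=\oplus_\nu(\ffpr^\pi_\nu)^*$, defines $\xi_i\in(\ffpr^\pi_{\af_i})^*$ by $\xi_i(\theta_j)=\delta_{ij}(1-\pi_iq_i^2)^{-1}$, and then invokes the universal property of the \emph{free} algebra $\ffpr^\pi$ to obtain a unique algebra homomorphism $\phi:\ffpr^\pi\to\ffpr^{\pi*}$ with $\phi(\theta_i)=\xi_i$. Setting $(x,y)=\phi(y)(x)$, properties (a), (d) are immediate, (b) drops out of the definition of the dual product, and (c) is checked by induction on the number of factors in $y''$, tracking the $\pi^{p(\cdot)p(\cdot)}q^{-(|\cdot|,|\cdot|)}$ twists exactly as you anticipated. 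The payoff is that the ``hard part'' you flag---the consistency check that peeling generators from the left versus the right agrees---disappears entirely: well-definedness of $\phi$ is free from the universal property of a free algebra, and no super-commutation identity between ${}_ir$ and $r_j$ need be verified. Your approach would work but trades that one-line appeal for a more laborious audit. For symmetry, the cleanest route (implicit in Lusztig's argument and available here too) is to observe that the form $(x,y)':=(y,x)$ also satisfies (a)--(d), since (b) and (c) interchange while (a) and (d) are manifestly symmetric; uniqueness then forces $(x,y)=(y,x)$, bypassing your inductive computation.
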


\begin{proof}
We follow \cite[1.2.3]{Lu} to define an associative algebra
structure on $\ffpr^* :=\oplus_{\nu} \ffpr^*_\nu$ by dualizing the
coproduct $r: \ffpr \rightarrow \ffpr \otimes \ffpr$. Let $\xi_i\in
\ffpr^*_i$ be defined by $\xi_i(\theta_j) =\dt_{ij}(1-\pi_i
q_i^{2})^{-1}$, for all $j\in I.$ Let $\phi: \ffpr \rightarrow
\ffpr^*$ be the unique algebra homomorphism such that
$\phi(\theta_i) =\xi_i$ for all $i$. We may identify
$\ffpr^*\otimes\ffpr^*\cong(\ffpr\otimes\ffpr)^*$ so that the
functional $(\phi\otimes\phi)(y\otimes y')$, for $y,y'\in \ffpr$, is
given by
\begin{equation}  \label{eq:phi2}
(\phi(y)\otimes\phi(y'))(x\otimes x')=(\phi(y)(x))(\phi(y')(x')),
\quad x,x'\in \ffpr.
\end{equation}
Note that the maps $\phi$ and $\phi\otimes\phi$ preserve the
$(Q\times \Z_2)$-grading.

Define $(x,y) =\phi(y)(x)$, for $x,y\in \ffpr$. Property (a) follows
directly from the definition, and (d) follows from \eqref{eq:phi2}.

Clearly $(x,y)=0$ unless (homogeneous) $x,y$ have the same weight in
$ Q$, which implies they must have the same parity. All elements involved below will be
assumed to be homogeneous.

Now, write $r(x) =\sum x_1\otimes x_2$. We have
\begin{align*}
(x,y'y'') &=\phi(y'y'')(x) =(\phi(y')\phi(y'')) (x)\\
    &=(\phi(y')\otimes \phi(y'')) (r(x))   \\
    &=\sum (\phi(y')\otimes \phi(y'')) (x_1\otimes x_2)\\
    &=\sum  \phi(y')(x_1) \phi(y'')(x_2)\\
    &=\sum \phi(y')(x_1)\phi(y'')(x_2) = (r(x), y'\otimes y'').
\end{align*}
This proves (b).

It remains to prove (c). The cases when $y''$ is $1$ or $\theta_j$
can be checked directly. Assume that (c) is known for $y''$ replaced
by $y$ or $y'$ and for any $x, x'$. We then prove that (c) holds for
$y''=yy'$. Write
\begin{align*}
r(x) =\sum x_1\otimes x_2, \quad & r(x') =\sum x_1'\otimes x_2',
  \\
r(y) =\sum y_1\otimes y_2, \quad & r(y') =\sum y_1'\otimes y_2'.
\end{align*}
Then
\begin{align*}
r(xx') &=\sum q^{(|x_2|,|x_1'|)}\pi^{p(x_2)p(x_1')} x_1x_1'\otimes
x_2x_2',
  \\
r(yy') &=\sum q^{(|y_2|,|y_1'|)}\pi^{p(y_2)p(y_1')} y_1y_1'\otimes
y_2y_2'.
\end{align*}
We have
\begin{align}
(xx',yy') &=(\phi(y)\phi(y'))(xx')=(\phi(y)\otimes\phi(y'))(r(xx'))
   \notag \\
&= \sum q^{-(|x_2|, |x_1'|)} \pi^{p(x_2)p(x_1')}
(x_1x_1',y)(x_2x_2',y')
   \notag \\
&= \sum q^{-(|x_2|, |x_1'|)} \pi^{p(x_2)p(x_1')}
(x_1 \otimes x_1',r(y))(x_2\otimes x_2',r(y'))
   \notag \\
&= \sum q^{-(|x_2|, |x_1'|)}
\pi^{p(x_2)p(x_1')} (x_1,y_1)(x_1',y_2)(x_2,y_1')(x_2',y_2').
  \label{eq:d}
\end{align}
On the other hand,
\begin{align}
(x\otimes x', r(yy'))
 &=\sum q^{-(|y_2|,|y_1'|)}\pi^{p(y_2)p(y_1')}
(x\otimes x', y_1y_1'\otimes y_2y_2')
   \notag \\
&=\sum q^{-(|y_2|,|y_1'|)}\pi^{p(y_2)p(y_1')} (x,y_1y_1')(x',y_2y_2')
   \notag \\
&=\sum q^{-(|y_2|,|y_1'|)}\pi^{p(y_2)p(y_1')}
(r(x),y_1\otimes y_1')(r(x'),y_2\otimes y_2')
   \notag \\
&=\sum q^{-(|y_2|,|y_1'|)}
\pi^{p(y_2)p(y_1')}
(x_1,y_1)(x_1',y_2)(x_2,y_1')(x_2',y_2').
  \label{eq:e}
\end{align}
For a summand to make nonzero contribution, we must have
$|x_1'|=|y_2|$ and $|x_2|=|y_1'|$ in $Q$ and, therefore, both
$p(x_1')=p(y_2)$ and $p(x_2)=p(y_1')$. It follows that the powers of
$q$ and $\pi$ in \eqref{eq:d} and \eqref{eq:e} match perfectly.
Hence the two sums in \eqref{eq:d} and \eqref{eq:e} are equal,
proving (c).
\end{proof}

We then define (half of) the {\em covering Kac-Moody algebra} to be the
quotient algebra $\ffpr^{\pi}$ by the radical as
$$
\fqp =\ffpr^{\pi} /\text{Rad} (\cdot, \cdot).
$$
The bilinear form $(\cdot,\cdot)$ on $\ffpr$ descends to a
non-degenerate bilinear form $(\cdot,\cdot)$ on $\fqp$ satisfying
the same properties as in Proposition~\ref{prop:bform}, and we also
have an induced algebra homomorphism $r: \fqp \rightarrow \fqp
\otimes \fqp$.

\subsection{Binomial identities}
For an indeterminate $v$, let
$$
[a]_v=\frac{v^a -v^{-a}}{v -v^{-1}},
 \qquad \left[
\begin{matrix}a\\t\end{matrix}\right]_v =\frac{[a]_v!}{[t]_v![a-t]_v!},
\quad \text{for } 0\le t \le a.
$$
Recall from \cite[1.3.5]{Lu} the quantum binomial formula: for any
$a\ge 0$ and $x,y$ in a $\Q(v)$-algebra such that $xy=v^2yx$,
\begin{equation}  \label{eq:binom}
(x+y)^a = \sum_{t=0}^a v^{t(a-t)}  \left[
\begin{matrix}a\\t\end{matrix}\right]_v y^t x^{a-t}.
\end{equation}

Now we convert \eqref{eq:binom} to notations in our super setting.

\begin{lem}  \label{lem:odbinom}
For any $a\ge 0$ and $x,y$ in a $\Q(q)$-algebra such that
$xy=\pi_i q_i^{-2}yx$,
\begin{equation}  \label{eq:binom-od}
(x+y)^a = \sum_{t=0}^a (\pi_iq_i)^{-t(a-t)}  \left[
\begin{matrix}a\\t\end{matrix}\right]_i y^t x^{a-t}.
\end{equation}
\end{lem}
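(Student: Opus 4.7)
The statement is a direct super analogue of Lusztig's quantum binomial formula \eqref{eq:binom}, so the plan is to mimic Lusztig's proof by induction on $a$, carefully tracking the extra $\pi_i$ factors. The base cases $a=0,1$ are immediate from the definition of the super binomials.

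For the inductive step I would first establish the commutation identity
\begin{equation*}
x y^t = \pi_i^t q_i^{-2t}\, y^t x
\end{equation*}
by a trivial induction on $t$ starting from $xy=\pi_i q_i^{-2}yx$, using that $\pi_i$ is central and that $\pi_i^2=1$ (so no signs accumulate from the coefficient of $\pi_i$ crossing $y$). Then I would expand $(x+y)^{a+1}=(x+y)(x+y)^a$, apply the inductive hypothesis, commute the left $x$ past each $y^t$ via the identity above, and collect the coefficient of $y^t x^{a+1-t}$. After this collection the induction reduces to verifying the super quantum Pascal identity
\begin{equation*}
\left[\begin{matrix}a+1\\t\end{matrix}\right]_i = q_i^{-t}\left[\begin{matrix}a\\t\end{matrix}\right]_i + \pi_i^{a+1-t} q_i^{a+1-t}\left[\begin{matrix}a\\t-1\end{matrix}\right]_i.
\end{equation*}

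This Pascal identity in turn, after clearing the common factor $[a]_i!/([t]_i![a+1-t]_i!)$, reduces to the super quantum-integer recurrence
\begin{equation*}
[a+1]_i = q_i^{-t}[a+1-t]_i + \pi_i^{a+1-t} q_i^{a+1-t}[t]_i,
\end{equation*}
which is a one-line algebraic check directly from the formula $[n]_i=(\pi_i^n q_i^n-q_i^{-n})/(\pi_i q_i - q_i^{-1})$ in \eqref{eq:pi}, using $\pi_i^2=1$ to absorb the cross terms.

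The main obstacle is not conceptual but bookkeeping: one has to keep the exponents of $\pi_i$ and $q_i$ straight through every commutation and match them carefully on both sides of the Pascal identity, exploiting $\pi_i^2=1$ at several points. In particular there is a ``mirror'' variant of the super Pascal identity with the roles of the two summands swapped; the version above is the one whose sign and degree pattern exactly matches what the expansion $(x+y)(x+y)^a$ produces, and choosing it is the only non-routine step.
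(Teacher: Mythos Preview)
Your inductive proof is correct: the super Pascal identity you wrote down is exactly the one that falls out of expanding $(x+y)(x+y)^a$, and the reduction to the super quantum-integer recurrence checks out line by line.

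However, the paper takes a different and shorter route. Rather than reproving the binomial formula by induction, it reduces \eqref{eq:binom-od} to Lusztig's formula \eqref{eq:binom} via a substitution. For $i$ even this is immediate with $v=q_i^{-1}$. For $i$ odd, the paper formally adjoins a square root $\sqrt{\pi}$ and sets $v=\sqrt{\pi}\,q_i^{-1}$, so that $xy=\pi_i q_i^{-2}yx$ becomes $xy=v^2yx$ and \eqref{eq:binom} applies directly; the conversion back uses the identities $[a]_v=\sqrt{\pi}^{\,a-1}[a]_i$ and $\left[\begin{smallmatrix}a\\t\end{smallmatrix}\right]_v=\sqrt{\pi}^{\,t(a-t)}\left[\begin{smallmatrix}a\\t\end{smallmatrix}\right]_i$, after which all surviving powers of $\sqrt{\pi}$ are even and collapse using $\pi^2=1$. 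This trick avoids rederiving any Pascal-type identity and is the paper's preferred device throughout (the same substitution reappears in the proof of Theorem~\ref{th:serre}). Your approach, by contrast, is more self-contained: it stays inside $\Q(q)^\pi$ without adjoining $\sqrt{\pi}$, and it produces the super Pascal identity as a byproduct, which is of independent interest even if the paper never states it explicitly.
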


\begin{proof}
For $i$ even, $p(i)=0$ and so \eqref{eq:binom-od} is simply
\eqref{eq:binom} with $v=q_i^{-1}$.

Now assume $i$ is odd, i.e, $p(i)=1$, and so $\pi_i=\pi$.  Fix a
square root $\sqrt{\pi}$ of $\pi$ once for all. We have $xy =v^2yx$,
if we introduce a new indeterminate $v$ by letting
\begin{equation}  \label{eq:vq}
v:=\sqrt{\pi}q_i^{-1}.
\end{equation}
Hence \eqref{eq:binom} is applicable, and we have
$$
(x+y)^a = \sum_{t=0}^a (\sqrt{\pi}q_i^{-1})^{t(a-t)}  \left[
\begin{matrix}a\\t\end{matrix}\right]_v y^t x^{a-t}.
$$
This identity can then converted to \eqref{eq:binom-od} by using the
following identities: for $a\ge t \ge 0$,
\begin{equation}  \label{eq:Zvq}
[a]_v =\sqrt{\pi}^{a-1} [a]_i, \qquad
 [a]_v! =\sqrt{\pi}^{\frac{a(a-1)}{2}} [a]_i!, \qquad
\left[\begin{matrix}a\\t\end{matrix}\right]_v = \sqrt{\pi}^{t(a-t)}
\left[
\begin{matrix}a\\t\end{matrix}\right]_i.
\end{equation}
Since $\pi=\pi^{-1}$, this proves the lemma.
\end{proof}

\subsection{Bilinear form}

We now study the divided powers in relation to the homomorphism $r$
and the bilinear form $(\cdot,\cdot)$.

\begin{lem}  \label{lem:rtheta}
For any $a \in \Z_{\geq0}$ and any $i\in I$, we have
$$
r(\theta_i^{(a)}) =\sum_{t+t'=a} (\pi_iq_i)^{-tt'} \theta_i^{(t)} \otimes
\theta_i^{(t')}.
$$
\end{lem}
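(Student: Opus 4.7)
The plan is to expand $r(\theta_i^a)=r(\theta_i)^a=(\theta_i\otimes 1+1\otimes\theta_i)^a$ via a quantum binomial theorem, then divide by $[a]_i!$. The key input is that the two summands in $r(\theta_i)$ satisfy a clean $\pi_i q_i^{-2}$-commutation, namely \eqref{eq:piq2}, so one of our previous lemmas applies directly.

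More precisely, I would set $x=1\otimes\theta_i$ and $y=\theta_i\otimes 1$ inside $\ffpr^\pi\otimes\ffpr^\pi$. Equation \eqref{eq:piq2} then reads $xy=\pi_i q_i^{-2}yx$, which is precisely the hypothesis of Lemma~\ref{lem:odbinom} applied to the subalgebra of $\ffpr^\pi\otimes\ffpr^\pi$ generated by $x$ and $y$. Invoking that lemma gives
\[
(x+y)^a=\sum_{t=0}^{a}(\pi_i q_i)^{-t(a-t)}\left[\begin{matrix}a\\t\end{matrix}\right]_i y^{t}x^{a-t}.
\]

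Next I would simplify $y^{t}x^{a-t}$ in the tensor product. By the multiplication rule \eqref{eq:prod}, each factor in $y^{t}=(\theta_i\otimes 1)^{t}$ or $x^{a-t}=(1\otimes\theta_i)^{a-t}$ pairs a $\theta_i$ against a $1$, so all the $\pi^{p(\cdot)p(\cdot)}$ and $q^{-(|\cdot|,|\cdot|)}$ factors are trivial; hence $y^{t}=\theta_i^{t}\otimes 1$ and $x^{a-t}=1\otimes\theta_i^{a-t}$, and similarly $(\theta_i^{t}\otimes 1)(1\otimes\theta_i^{a-t})=\theta_i^{t}\otimes\theta_i^{a-t}$. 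Thus
\[
r(\theta_i^a)=\sum_{t=0}^{a}(\pi_i q_i)^{-t(a-t)}\left[\begin{matrix}a\\t\end{matrix}\right]_i \theta_i^{t}\otimes\theta_i^{a-t}.
\]
Dividing by $[a]_i!$ and using $\frac{1}{[a]_i!}\bigl[\begin{smallmatrix}a\\t\end{smallmatrix}\bigr]_i=\frac{1}{[t]_i![a-t]_i!}$ converts each summand into $\theta_i^{(t)}\otimes\theta_i^{(a-t)}$, yielding the claimed identity.

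The only real subtlety, and the one step worth triple-checking, is the bookkeeping in matching \eqref{eq:piq2} with Lemma~\ref{lem:odbinom}: the lemma assumes $xy=\pi_i q_i^{-2}yx$, and one must choose $x=1\otimes\theta_i$, $y=\theta_i\otimes 1$ (not the reverse) so that the hypothesis holds verbatim, ensuring $y^{t}x^{a-t}$ expands as $\theta_i^{(t)}\otimes\theta_i^{(t')}$ rather than the opposite order. No nontrivial super sign is produced in this step because one side of every tensor factor is a unit, which is the reason the final formula involves only $(\pi_i q_i)^{-tt'}$ and no additional $\pi$-twist.
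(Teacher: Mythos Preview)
Your proof is correct and follows essentially the same approach as the paper: set $x=1\otimes\theta_i$, $y=\theta_i\otimes 1$, invoke \eqref{eq:piq2} to verify the hypothesis of Lemma~\ref{lem:odbinom}, and then divide by $[a]_i!$. The paper's version is terser and separates the even and odd cases explicitly, but the substance is identical.
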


\begin{proof}
When $i\in I_\zero$, this is simply \cite[Lemma~1.4.2]{Lu},
which was proved directly using \eqref{eq:binom}.

Now assume $i\in I_\one$. Thanks to the identity \eqref{eq:piq2}, the
assumption of Lemma~\ref{lem:odbinom} is satisfied with $x=1\otimes
\theta_i$ and $y=\theta_i\otimes 1$. Hence this lemma follows
directly from \eqref{eq:binom-od} by the definition of the divided
power based on \eqref{eq:pi}.
\end{proof}

\begin{lem}  \label{lem:thetap2}
For any $a \in \Z_{\geq0}$ and any $i\in I$, we have
\begin{align*}
(\theta_i^{(a)}, \theta_i^{(a)})
 &=\pi_i^{a(a-1)/2}\prod_{s=1}^a
(1-\pi_i^{s} q_i^{2s})^{-1}
  \\
&= (-1)^a \pi_i^{a(a-1)/2} q_i^{-a(a+1)/2} (\pi_iq_i-q_i^{-1})^{-a}
([a]_i!)^{-1}.
\end{align*}
\end{lem}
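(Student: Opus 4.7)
The plan is to prove the first equality by induction on $a$, and then derive the second equality by algebraic manipulation using the definitions of $[s]_i$.

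For the induction, the base case $a=0$ (giving $1$) and $a=1$ (giving $(1-\pi_i q_i^2)^{-1}$, which is exactly Proposition~\ref{prop:bform}(a)) are immediate. For the inductive step, I would use the identity $\theta_i \theta_i^{(a-1)} = [a]_i \theta_i^{(a)}$ to write
$$
(\theta_i^{(a)}, \theta_i^{(a)}) = [a]_i^{-1} (\theta_i^{(a)}, \theta_i \theta_i^{(a-1)}) = [a]_i^{-1} (r(\theta_i^{(a)}),\, \theta_i \otimes \theta_i^{(a-1)}),
$$
invoking Proposition~\ref{prop:bform}(b). By Lemma~\ref{lem:rtheta}, $r(\theta_i^{(a)}) = \sum_{t+t'=a} (\pi_i q_i)^{-tt'} \theta_i^{(t)} \otimes \theta_i^{(t')}$, and by weight considerations together with Proposition~\ref{prop:bform}(d), only the summand with $t=1,\, t'=a-1$ contributes, yielding
$$
(\theta_i^{(a)}, \theta_i^{(a)}) = [a]_i^{-1}(\pi_i q_i)^{-(a-1)}(\theta_i,\theta_i)(\theta_i^{(a-1)}, \theta_i^{(a-1)}).
$$

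The key algebraic step, which is the mild obstacle to track carefully, is to recognize the factorization $\pi_i^a q_i^a - q_i^{-a} = -q_i^{-a}(1-\pi_i^a q_i^{2a})$ so that
$$
[a]_i^{-1}(\pi_i q_i - q_i^{-1}) = (\pi_i^a q_i^a - q_i^{-a})^{-1}(\pi_i q_i - q_i^{-1}) = -q_i^a (1-\pi_i^a q_i^{2a})^{-1}\cdot (\pi_i q_i - q_i^{-1})\cdot\text{(scalar)},
$$
and similarly $(1-\pi_i q_i^2)^{-1} = -q_i^{-1}(\pi_i q_i - q_i^{-1})^{-1}$. Combining these two identities shows
$$
[a]_i^{-1}(\pi_i q_i)^{-(a-1)}(1-\pi_i q_i^{2})^{-1} = \pi_i^{a-1}(1-\pi_i^a q_i^{2a})^{-1},
$$
after using $\pi_i^{-1}=\pi_i$. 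Plugging in the induction hypothesis and using $(a-1) + (a-1)(a-2)/2 = a(a-1)/2$ on the exponents of $\pi_i$ gives the first displayed formula.

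For the second equality, I would apply the factorization $1 - \pi_i^s q_i^{2s} = -q_i^s [s]_i(\pi_i q_i - q_i^{-1})$ to each factor in the product $\prod_{s=1}^a(1-\pi_i^s q_i^{2s})^{-1}$. Taking the product over $s=1,\dots,a$ produces the sign $(-1)^a$, the $q$-power $q_i^{-(1+2+\cdots+a)} = q_i^{-a(a+1)/2}$, the factor $(\pi_i q_i - q_i^{-1})^{-a}$, and the factorial $([a]_i!)^{-1}$, which combine to the claimed expression. The main bookkeeping concern throughout is the consistent use of $\pi_i^2=1$ to simplify sign factors, but no deeper ingredient is needed beyond Proposition~\ref{prop:bform} and Lemma~\ref{lem:rtheta}.
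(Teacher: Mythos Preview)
Your proof is correct and follows essentially the same approach as the paper: induction via Lemma~\ref{lem:rtheta} together with Proposition~\ref{prop:bform}(b,d), followed by the elementary factorization $1-\pi_i^s q_i^{2s} = -q_i^s[s]_i(\pi_iq_i-q_i^{-1})$ for the second identity. The only cosmetic difference is that the paper inducts from an arbitrary pair $(a,a')$ to $a+a'$ using the $q$-binomial $\left[\begin{smallmatrix}a+a'\\a\end{smallmatrix}\right]_i^{-1}$, whereas you take the special case $a'=1$ (so the binomial becomes $[a]_i^{-1}$), which is sufficient and slightly cleaner.
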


\begin{proof}
We will only prove the first identity, as the second identity is
elementary.

The argument is similar to the proof of \cite[Lemma~1.4.4]{Lu},
which corresponds to the case for $i \in I_\zero$. We proceed by
induction. Note that the lemma holds for $a=0$ or $1$. Assume the
lemma holds for $a$ and also for $a'$. Using Lemma~\ref{lem:rtheta}
and \eqref{eq:tensorform} we have
\begin{align*}
(\theta_i^{(a+a')}, \theta_i^{(a+a')})
 &=\left[
\begin{matrix}a+a'\\a\end{matrix}\right]_i^{-1}
\Big( r(\theta_i^{(a+a')}), \theta_i^{(a)}\otimes  \theta_i^{(a')}
\Big)
 \\
&= (\pi_iq_i)^{-aa'} \left[
\begin{matrix}a+a'\\a\end{matrix}\right]_i^{-1}
 (\theta_i^{(a)},
\theta_i^{(a)})(\theta_i^{(a')}, \theta_i^{(a')})
 \\
&=(\pi_iq_i)^{-aa'} \left[
\begin{matrix}a+a'\\a\end{matrix}\right]_i^{-1}
 \pi_i^{a\choose2}\prod_{s=1}^a (1-\pi_i^{s}
q_i^{2s})^{-1} \cdot \pi_i^{a'\choose2}\prod_{s=1}^{a'}(1-\pi_i^{s}
q_i^{2s})^{-1}
 \\
&= \pi_i^{aa'+a(a-1)/2+a'(a'-1)/2}
\prod_{s=1}^{a+a'}(1-\pi_i^{s}q_i^{2s})^{-1}.
\end{align*}
Since $aa'+{a\choose2}+{a'\choose2}={{a+a'}\choose2}$, this gives
the result.
\end{proof}

\subsection{Quantum Serre relations}

We first formulate the following super analogue of the last formula
in the proof of \cite[Lemma~1.4.5]{Lu}.

\begin{lem}  \label{lem:claim}
Assume $i\in I$ is odd. Let $n\in \Z_{\geq0}$ and let $a,a',b, b'\in
\Z_{\geq0}$ such that $a+a' =b+b'=n$. Then,
\begin{align}  \label{eq:Lu12}
  \big(\theta_i^{(a)} \theta_j \theta_i^{(a')}, \theta_i^{(b)} \theta_j
\theta_i^{(b')} \big)
 = \sum \frac{(-1)^{n+1} q_i^{-\clubsuit}
\pi^{\heartsuit}
 }{(\pi q_i-q_i^{-1})^n (1-\pi_jq_j^{2}) [s]_i! [s']_i! [t]_i! [t']_i!
   },
\end{align}
where the sum is taken over all $t,t',s,s'$ in $\Z_{\geq0}$ such that %
\begin{equation} \label{eq:tsts}
t+s=b, t'+s'=b', t+t'=a, s+s'=a',
\end{equation}
and we have denoted
\begin{align*}
\spadesuit &=\frac12 \big( s(s-1) +s'(s'-1)+t(t-1)+t'(t'-1)\big ),
 \\
\clubsuit &=ss'+tt'+ts+t's'+2t's+(\spadesuit +n)+(t'+s)a_{ij},
 \\
\heartsuit &=ss'+tt'+ts+t's'+t's+(s+t')p(j) +\spadesuit.
\end{align*}
\end{lem}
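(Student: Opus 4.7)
The strategy is the super analogue of Lusztig's proof of \cite[Lemma~1.4.5]{Lu}: iterate property~(c) of Proposition~\ref{prop:bform} until the pairing is reduced to products of the diagonal values $(\theta_i^{(k)},\theta_i^{(k)})$ and $(\theta_j,\theta_j)$, which are then evaluated via Lemma~\ref{lem:thetap2} and Proposition~\ref{prop:bform}(a). The two essential inputs are the coproduct formula $r(\theta_i^{(k)})=\sum_{u+v=k}(\pi q_i)^{-uv}\theta_i^{(u)}\otimes\theta_i^{(v)}$ from Lemma~\ref{lem:rtheta} (using $\pi_i=\pi$ since $i$ is odd), and the multiplicativity of $r$ with respect to the twisted product \eqref{eq:prod}.

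First apply (c) to factorize the right argument as $\theta_i^{(a)}\cdot(\theta_j\theta_i^{(a')})$ and expand $r(\theta_i^{(b)}\theta_j\theta_i^{(b')})=r(\theta_i^{(b)})\,r(\theta_j)\,r(\theta_i^{(b')})$ in the twisted tensor algebra. A weight argument forces the $\theta_j$ contribution from $r(\theta_j)=\theta_j\otimes 1+1\otimes\theta_j$ into the right tensor slot, yielding the summation indices $t+s=b$, $t'+s'=b'$, $t+t'=a$, $s+s'=a'$. Commuting $1\otimes\theta_j$ past $\theta_i^{(t')}\otimes\theta_i^{(s')}$ via \eqref{eq:prod} introduces $\pi^{t'p(j)}q_i^{-t'a_{ij}}$; then left-multiplying by $\theta_i^{(t)}\otimes\theta_i^{(s)}$ produces $\pi^{st'}q_i^{-2st'}$ together with the quantum binomial $\left[\begin{matrix}a\\t\end{matrix}\right]_i$ arising from $\theta_i^{(t)}\theta_i^{(t')}$. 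Property~(d) of Proposition~\ref{prop:bform} then reduces the original pairing to a sum over $(t,t',s,s')$ of
$$(\pi q_i)^{-ts-t's'}\pi^{t'p(j)+st'}q_i^{-t'a_{ij}-2st'}\left[\begin{matrix}a\\t\end{matrix}\right]_i(\theta_i^{(a)},\theta_i^{(a)})\,\big(\theta_i^{(s)}\theta_j\theta_i^{(s')},\,\theta_j\theta_i^{(a')}\big).$$

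A second application of~(c) and Lemma~\ref{lem:rtheta} to the inner pairing similarly selects $u=s$, $u'=s'$ in the expansion of $r(\theta_j\theta_i^{(a')})$, yielding
$$\big(\theta_i^{(s)}\theta_j\theta_i^{(s')},\,\theta_j\theta_i^{(a')}\big)=(\pi q_i)^{-ss'}\pi^{sp(j)}q_i^{-sa_{ij}}\,(\theta_i^{(s)},\theta_i^{(s)})(\theta_j,\theta_j)(\theta_i^{(s')},\theta_i^{(s')}),$$
with $(\theta_j,\theta_j)=(1-\pi_j q_j^2)^{-1}$ supplying the $(1-\pi_j q_j^2)$ factor in the denominator. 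Substituting the second form of Lemma~\ref{lem:thetap2} for each $(\theta_i^{(k)},\theta_i^{(k)})$ with $k\in\{a,s,s'\}$ cancels $[a]_i!$ against $\left[\begin{matrix}a\\t\end{matrix}\right]_i$ to produce the denominator $[t]_i![t']_i![s]_i![s']_i!$, and supplies the factor $(\pi q_i-q_i^{-1})^{-(a+s+s')}=(\pi q_i-q_i^{-1})^{-n}$ together with the claimed overall sign.

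The main obstacle is the systematic bookkeeping of the $\pi$- and $q_i$-exponents. The exponent of $q_i$ collects $-ts-t's'-ss'$ (from Lemma~\ref{lem:rtheta}), $-2st'-t'a_{ij}-sa_{ij}$ (from the twisted products via \eqref{eq:prod}), and $-\tfrac12 a(a+1)-\tfrac12 s(s+1)-\tfrac12 s'(s'+1)$ (from Lemma~\ref{lem:thetap2}); the exponent of $\pi$ collects $ts+t's'+ss'+st'+(t'+s)p(j)$ together with $\tfrac12 a(a-1)+\tfrac12 s(s-1)+\tfrac12 s'(s'-1)$. Using $a=t+t'$, $a'=s+s'$, and $\pi^2=1$, one verifies by a careful routine tally that these exponents collapse to $q_i^{-\clubsuit}$ and $\pi^{\heartsuit}$ exactly in the form stated, with $\spadesuit$ absorbing the diagonal contributions from Lemma~\ref{lem:thetap2}. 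The novelty over Lusztig's sign-free computation in \cite[Lemma~1.4.5]{Lu} lies entirely in these additional $\pi$-factors, which appear because $\pi_i=\pi$ for odd~$i$.
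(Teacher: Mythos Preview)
Your proof is correct and follows essentially the same route as the paper: iterate Proposition~\ref{prop:bform} together with Lemma~\ref{lem:rtheta} to reduce the pairing to a product of the diagonal values $(\theta_i^{(k)},\theta_i^{(k)})$ and $(\theta_j,\theta_j)$, then substitute Lemma~\ref{lem:thetap2}. The only cosmetic differences are that the paper splits the first argument as $(\theta_i^{(a)}\theta_j)\cdot\theta_i^{(a')}$ (and then uses property~(b) for the inner step), whereas you split it as $\theta_i^{(a)}\cdot(\theta_j\theta_i^{(a')})$ and use~(c) twice; after converting $\left[\begin{smallmatrix}a\\t\end{smallmatrix}\right]_i(\theta_i^{(a)},\theta_i^{(a)})=(\pi q_i)^{-tt'}(\theta_i^{(t)},\theta_i^{(t)})(\theta_i^{(t')},\theta_i^{(t')})$ both computations land on the same expression \eqref{eq:5pair}. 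One small slip: in your first sentence you say ``factorize the right argument as $\theta_i^{(a)}\cdot(\theta_j\theta_i^{(a')})$'', but that is the \emph{left} argument of the pairing.
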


\begin{proof}
We first compute by Lemma~\ref{lem:rtheta} that
\begin{align}
& r(\theta_i^{(b)}\theta_j\theta_i^{(b')})
=r(\theta_i^{(b)})r(\theta_j)r(\theta_i^{(b')})
  \notag \\
 &=\big(\sum_{t+s=b} (\pi q_i)^{-ts} \theta_i^{(t)}\otimes
 \theta_i^{(s)}\big)
 \cdot (\theta_j\otimes 1 +1\otimes \theta_j) \cdot
 \big(\sum_{t'+s'=b'} (\pi q_i)^{-t's'} \theta_i^{(t')}\otimes
 \theta_i^{(s')}\big)
  \notag \\
 &=\sum q_i^{-(ts+t's'+2t's)} q^{-s(\af_i,\af_j)} \pi^{ts+t's'+t's+sp(j)}
\theta_i^{(t)}\theta_j\theta_i^{(t')}\otimes
\theta_i^{(s)}\theta_i^{(s')}
 \notag \\
 &\quad + \sum q_i^{-(ts+t's'+2t's)} q^{-t'(\af_i,\af_j)}
\pi^{ts+t's'+t's+t'p(j)} \theta_i^{(t)}\theta_i^{(t')}\otimes
\theta_i^{(s)}\theta_j\theta_i^{(s')}.
  \label{eq:riji}
 \end{align}
Using \eqref{eq:riji} and $q^{(\af_i,\af_j)} =q_i^{a_{ij}}$, we further
compute that
\begin{align}
& (\theta_i^{(a)}\theta_j\theta_i^{(a')},
\theta_i^{(b)}\theta_j\theta_i^{(b')})
  \notag \\
& =(\theta_i^{(a)}\theta_j\otimes\theta_i^{(a')},
r(\theta_i^{(b)}\theta_j\theta_i^{(b')}) )
  \notag \\
&=\sum q_i^{-(ts+t's'+2t's)} q^{-s(\af_i,\af_j)} \pi^{t's+ts+t's'+sp(j)}
(\theta_i^{(a)}\theta_j\otimes\theta_i^{(a')},
\theta_i^{(t)}\theta_j\theta_i^{(t')}\otimes
\theta_i^{(s)}\theta_i^{(s')} )
  \notag \\
&= \sum q_i^{-(ts+t's'+2t's+s a_{ij})}   \pi^{ts+t's'+t's+sp(j)}
(\theta_i^{(a)}\theta_j, \theta_i^{(t)}\theta_j\theta_i^{(t')})
(\theta_i^{(a')}, \theta_i^{(s)}\theta_i^{(s')} ).
 \label{eq:ijiab}
\end{align}
 It follows by Lemma~\ref{lem:rtheta} that
\begin{align}
(\theta_i^{(a')}, \theta_i^{(s)}\theta_i^{(s')} )
 &=(r(\theta_i^{(a')}), \theta_i^{(s)}\otimes \theta_i^{(s')} )
 \notag \\
 &=q_i^{-ss'} \pi^{ss'} (\theta_i^{(s)}, \theta_i^{(s)})
(\theta_i^{(s')}, \theta_i^{(s')} ). \label{eq:ass'}
\end{align}
 Also using $q^{(\af_i,\af_j)} =q_i^{a_{ij}}$, we have
\begin{align}
(\theta_i^{(a)}\theta_j, \theta_i^{(t)}\theta_j\theta_i^{(t')})
 &=(r(\theta_i^{(a)})r(\theta_j), \theta_i^{(t)}\theta_j\otimes\theta_i^{(t')})
 \notag \\
 &=q_i^{-(tt'+t'a_{ij})} \pi^{tt'+t'p(j)}  (\theta_j, \theta_j)
(\theta_i^{(t)}, \theta_i^{(t)}) (\theta_i^{(t')}, \theta_i^{(t')}
). \label{eq:aijit'}
 \end{align}
Inserting \eqref{eq:ass'} and \eqref{eq:aijit'} into
\eqref{eq:ijiab}, we obtain
\begin{align}
& (\theta_i^{(a)}\theta_j\theta_i^{(a')},
\theta_i^{(b)}\theta_j\theta_i^{(b')})
\notag \\
 &= q_i^{-(ss'+tt'+ts+t's'+2t's+(s+t') a_{ij})}
\pi^{ss'+tt'+ts+t's'+t's+(s+t')p(j)} \times
 \notag \\
 &\qquad \times (\theta_j, \theta_j) (\theta_i^{(s)}, \theta_i^{(s)})
(\theta_i^{(s')}, \theta_i^{(s')} ) (\theta_i^{(t)}, \theta_i^{(t)})
(\theta_i^{(t')}, \theta_i^{(t')} ).
 \label{eq:5pair}
\end{align}
The right-hand side of \eqref{eq:5pair} can then be converted to
\eqref{eq:Lu12} using Lemma~\ref{lem:thetap2} repeatedly and noting
that
$$
\frac12 \big( s(s+1) +s'(s'+1)+t(t+1)+t'(t'+1)\big ) =\spadesuit +n.
$$
This proves the lemma.
\end{proof}

Now we are ready to state and prove the following fundamental
result, called the {\em quantum Serre relations}. Recall
$p(k;i,j)$ from \eqref{E:pkij}.
\begin{thm}  \label{th:serre}
Let $A$ be a GCM satisfying (C1)-(C5).
 For any $i \neq j $ in $I$, the following
identities hold in $\fqp$:
\begin{equation}  \label{eq:serretheta}
\sum_{a+a'=1-a_{ij}} (-1)^{a'} \pi^{p(a';i,j)} \theta_i^{(a)} \theta_j
\theta_i^{(a')} =0.
\end{equation}
\end{thm}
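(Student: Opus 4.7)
My plan is to exploit the non-degeneracy of the bilinear form on $\fqp$ established earlier: it suffices to show that the Serre element $\Sigma_{ij} := \sum_{a+a'=1-a_{ij}} (-1)^{a'} \pi^{p(a';i,j)} \theta_i^{(a)} \theta_j \theta_i^{(a')}$ pairs trivially with every element in the same weight space of $\ffpr^\pi$. Since $|\Sigma_{ij}| = (1-a_{ij})\alpha_i + \alpha_j$, and the vectors $\theta_i^{(b)} \theta_j \theta_i^{(b')}$ with $b+b' = 1-a_{ij}$ span this weight space, it is enough to prove $\big(\Sigma_{ij},\, \theta_i^{(b)} \theta_j \theta_i^{(b')}\big) = 0$ for every such $(b,b')$.

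The case $i \in I_\zero$ (so $\pi_i = 1$) is essentially Lusztig's original computation carried out in \cite[\S1.4]{Lu}, since the specializing identity \eqref{eq:binom-od} reduces to \eqref{eq:binom}. The genuinely new case is $i \in I_\one$, which I will handle using Lemma~\ref{lem:claim}. Substituting the formula \eqref{eq:Lu12} into the sum, the pairing becomes
\begin{equation*}
\big(\Sigma_{ij}, \theta_i^{(b)} \theta_j \theta_i^{(b')}\big)
 = \sum_{a+a'=n} (-1)^{a'} \pi^{p(a';i,j)} \!\!\!\sum_{\substack{t+t'=a\\ s+s'=a'\\ t+s=b,\ t'+s'=b'}}\!\! \frac{(-1)^{n+1} q_i^{-\clubsuit}\pi^{\heartsuit}}{(\pi q_i - q_i^{-1})^n (1 - \pi_j q_j^2)\, [s]_i!\, [s']_i!\, [t]_i!\, [t']_i!},
\end{equation*}
where $n = 1 - a_{ij}$. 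Since the constraints force $(a,a') = (t+t', s+s')$, I can re-index the double sum as a single sum over all quadruples $(t,t',s,s') \in \Z_{\ge 0}^4$ with $t+s=b$, $t'+s'=b'$.

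The next step is a bookkeeping reduction: I will collect the exponents $\clubsuit$ and $\heartsuit$ together with $p(a';i,j) = a's'p(j) + \tfrac12 a'(a'-1)$ and show, using $p(i)=1$ and the oddness of $a_{ij}$-related quantities provided by (C4)--(C6), that all the sign/parity data reduce modulo the identification $\pi^2 = 1$ to a single monomial in $\pi$ and $q_i$ that factors cleanly over the summation. After pulling out the common factor $(1-\pi_j q_j^2)^{-1}(\pi q_i - q_i^{-1})^{-n}$, what remains will be a sum of the form $\sum_{t+s=b,\,t'+s'=b'} (\text{sign}) q_i^{\text{quadratic}} \pi^{\text{quadratic}} / ([t]_i![s]_i![t']_i![s']_i!)$, which I aim to recognize, via Lemma~\ref{lem:odbinom} and the relation \eqref{eq:vq}, as the product of two specializations of the quantum binomial theorem evaluated at a point where it vanishes — namely, the expansion of $(x+y)^b$ and $(x+y)^{b'}$ when $xy = \pi_i q_i^{-2} yx$ and $x + y = 0$ (the latter encoded by the alternating sign $(-1)^{a'}$). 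Equivalently, the inner sum becomes a $v$-binomial identity $\sum_t (-1)^t \left[\begin{matrix}b\\t\end{matrix}\right]_v = 0$ (for $b \ge 1$) after the change of variables \eqref{eq:vq}, which vanishes because $n \ge 1$ forces at least one of $b, b'$ to be positive.

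The main obstacle will be the sign/parity bookkeeping: matching $\clubsuit$, $\heartsuit$, and $p(a';i,j)$ against the combinatorial identity so the cross-terms collapse correctly. Hypotheses (C4)--(C6) enter precisely here, since they guarantee $a_{ij}p(j) \equiv 0 \pmod 2$ and that $s_i$ is odd when $i$ is odd, which is what lets the resulting expression be identified with a bona fide $v$-binomial sum after applying \eqref{eq:Zvq}. Once this identification is made, the vanishing is immediate.
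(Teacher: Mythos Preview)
Your plan is essentially the paper's own proof: pair the Serre element against the spanning set $\theta_i^{(b)}\theta_j\theta_i^{(b')}$, invoke Lemma~\ref{lem:claim} for $i$ odd, absorb $p(a';i,j)$ into the $\pi$-exponent (the paper packages the combined exponent as $\diamondsuit = 2p(a';i,j)+2\heartsuit-\clubsuit+\spadesuit$ and verifies $\diamondsuit \equiv 2p(j)b'+b^2-b-n \pmod 4$, hence independent of $s,s',t,t'$), pass to $v=\sqrt{\pi}\,q_i^{-1}$ via \eqref{eq:Zvq}, and factor the residual sum as a product of two $v$-binomial sums $\sum_{t+s=b}(-1)^s v^{-s(b-1)}([s]_v![t]_v!)^{-1}$, one of which vanishes by \cite[1.3.4(a)]{Lu} since $b+b'\ge 1$.

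Two corrections. First, $p(a';i,j)=a'p(j)+\tfrac12 a'(a'-1)$ (your $a's'p(j)$ is a slip). Second, and more to the point, the theorem is stated under (C1)--(C5) only, and (C6) is neither assumed nor used: the sole parity input in the $\diamondsuit$ computation is that $a_{ij}\in 2\Z$ when $i\in I_\one$, which is exactly (C4). No information about $s_i$ enters, so you should drop the appeal to (C6).
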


\begin{proof}
The strategy is to show the element on the left-hand side of
\eqref{eq:serretheta} orthogonal to $\fqp$  with respect to
$(\cdot,\cdot)$.
 For $i$ even, the same proof for \cite[Proposition~ 1.4.3]{Lu} applies
here without any change, regardless of the parity of $j$.

Now assume that $i$ is odd. Hence $p(i)=1$ and $\pi_i=\pi$. We still
proceed as in \cite{Lu} and keep track of the super signs carefully
in the meantime.

Using \eqref{eq:tsts} to get rid of $t$ and $s'$, we can rewrite
$\clubsuit$ in Lemma~\ref{lem:claim} as
\begin{align}  \label{eq:club}
\clubsuit
 &=b(b+1)/2 +b'(b'+1)/2-s(b-1)-t'(b'-1) +(t'+s)(a_{ij}+n-1).
\end{align}
(This agreed with the power of $v^{-1}$ in the corrected
\cite[Lemma~1.4.5]{Lu}.)

Recall the new indeterminate $v$ from \eqref{eq:vq}. It follows by
\eqref{eq:vq}, \eqref{eq:Zvq},  \eqref{eq:Lu12} and \eqref{eq:club}
that
\begin{align}  \label{eq:Lu1.4.5}
& \pi^{p(a';i,j)} \cdot \big(\theta_i^{(a)} \theta_j
\theta_i^{(a')}, \theta_i^{(b)} \theta_j \theta_i^{(b')} \big)
  \\
&= \frac{(-1)^{n+1} v^{b(b+1)/2 +b'(b'+1)/2}
 }{(\pi q_i-q_i^{-1})^n (1-q_j^{2}) }
\sum \frac{v^{-s(b-1)-t'(b'-1)
+(t'+s)(a_{ij}+n-1)}\sqrt{\pi}^\diamondsuit }{[s]_v! [s']_v! [t]_v!
 [t']_v!
   }
   \notag
\end{align}
where the sum is taken as in Lemma~\ref{lem:claim}. Here we have
\begin{equation} \label{eq:card}
\diamondsuit =2p(a';i,j) +2 \heartsuit -\clubsuit + \spadesuit,
\end{equation}
where $\spadesuit$ arises from the conversion \eqref{eq:Zvq}. Recall
$p(a';i,j) ={a'p(j) +\hf a'(a'-1)}$, and note that $a_{ij}=1-n$ is
an even integer by (C4). Some elementary and lengthy manipulation
using \eqref{eq:tsts} allows us to rewrite $\diamondsuit$ in
\eqref{eq:card} (in terms of $s, t',b,b'$) as
\begin{align*}
\diamondsuit &=p(j)\big(2a'+2(s+t')\big)
+a'(a'-1)+ss'+tt'+ts+t's'+2\spadesuit -n-(s+t')a_{ij}
  \\
 &=p(j)(4s+2b')
+2a'(a'-1)+b^2-b+(1-n-a_{ij})s +(n-1-a_{ij})t' -n
  \\
&\equiv 2p(j)b'+b^2-b-n \mod 4.
  \label{eq:diamond}
\end{align*}
%


Recalling $\pi^2=1$, it follows that $\sqrt{\pi}^\diamondsuit$ is
independent of $s,s',t,t'$ and can be moved to the front of $\sum$
in \eqref{eq:Lu1.4.5}.

To prove \eqref{eq:serretheta}, it suffices to show that
$\sum_{a+a'=1-a_{ij}} (-1)^{a'} \pi^{p(a',i,j)} \theta_i^{(a)}
\theta_j \theta_i^{(a')} \in {\fqp}$ is orthogonal with respect to
$(\cdot,\cdot)$ to $\theta_i^{(b)} \theta_j \theta_i^{(b')}$ for all
$b,b'$ such that $b+b'=1-a_{ij}$. (These elements $\theta_i^{(b)}
\theta_j \theta_i^{(b')}$ span ${\fqp}_{\af_j+ (1-a_{ij})\af_i}.$)
To that end, recalling \eqref{eq:Lu1.4.5} and noting $a_{ij}+n-1=0$,
it remains to verify the identity
$$
\sum (-1)^{s+s'} v^{-s(b-1) -t'(b'-1)} ([s]_v! [s']_v! [t]_v!
 [t']_v!)^{-1} =0,
 $$
where the sum is taken as in Lemma~\ref{lem:claim} again. We can
factorize the sum on the left hand side as
$$
\Big( \sum_{t+s=b} (-1)^s v^{-s(b-1)}[s]_v! [t]_v!)^{-1} \Big)
\Big( \sum_{t'+s'=b'} (-1)^{s'} v^{-s'(b'-1)}[s']_v! [t']_v!)^{-1}
\Big).
$$
Since at least one of $b$ and $b'$ is positive, one of the two
factors must be zero thanks to a classical binomial identity (cf.
\cite[1.3.4(a)]{Lu}).
\end{proof}

\subsection{The bar involution}\label{SS:barinv}

It is shown in \cite{BKM} that the integrable modules of the quantum
Kac-Moody superalgebras have the same characters as their classical
counterparts, generalizing the earlier work of \cite{Lu1}. Now based
on this fact and proceeding as in \cite[33.1]{Lu}, we can establish
Theorem~\ref{th:KMpres}(a) below as a super analogue of
\cite[Theorem~33.1.3]{Lu} (which is reformulated as
Theorem~\ref{th:KMpres}(b) below). Recall the algebras $\fqm, \fq,
\fqp$ were introduced in earlier subsections.

\begin{thm}  \label{th:KMpres}
There exist isomorphisms of $\Q(q)$-algebras:
\begin{enumerate}
\item[(a)]  $\fqp/(\pi+1)\cong\fqm$,
\item[(b)]  $\fqp/(\pi-1)\cong\fq$.
\end{enumerate}
\end{thm}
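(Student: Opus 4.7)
My plan is to handle (a) and (b) in parallel by specializing the bilinear form $(\cdot,\cdot)$ of Proposition~\ref{prop:bform} at $\pi=\pm 1$ and invoking the integrable character formula in the classical \cite{Lu1} and super \cite{BKM} settings, respectively, following the template of \cite[33.1]{Lu}.

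For $\epsilon\in\{1,-1\}$, first observe that $\ffpr^\pi/(\pi-\epsilon)\cong\ffpr$, the free $\Q(q)$-algebra on $\{\theta_i\mid i\in I\}$, and that the axioms in Proposition~\ref{prop:bform} descend to a $\Q(q)$-valued bilinear form $(\cdot,\cdot)_\epsilon$ on $\ffpr$. At $\epsilon=1$, the super sign $\pi^{p(x_2)p(x_1')}$ in \eqref{eq:prod} trivializes and $(\cdot,\cdot)_1$ is Lusztig's form associated to $\und A$; at $\epsilon=-1$, it specializes to the super bilinear form of \cite{BKM}. Since the radical clearly commutes with the specialization, this gives
\[
\fqp/(\pi-\epsilon)\;\cong\;\ffpr/\Rad(\cdot,\cdot)_\epsilon.
\]

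Next, Theorem~\ref{th:serre} established the quantum Serre relations \eqref{eq:serretheta} in $\fqp$. Setting $\pi=1$ in \eqref{eq:serretheta} recovers the ordinary Serre relations \eqref{eq:serre+}, while setting $\pi=-1$ recovers the super Serre relations \eqref{eq:superserre}, since $(-1)^{a'}\pi^{p(a';i,j)}$ specializes to $(-1)^{a'}$ and $(-1)^{a'+p(a';i,j)}$ respectively. Hence there are surjective $\Q(q)$-algebra homomorphisms
\[
\fq\;\twoheadrightarrow\;\fqp/(\pi-1),\qquad\fqm\;\twoheadrightarrow\;\fqp/(\pi+1),
\]
sending generators to generators.

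To upgrade surjectivity to isomorphism, I would compare graded dimensions weight-space by weight-space. For (b), this is precisely Lusztig's argument in \cite[Theorem~33.1.3]{Lu}: integrability fixes $\dim\fq_\nu$ via the Weyl--Kac character formula, and non-degeneracy of Lusztig's form forces injectivity. For (a), the result of \cite{BKM}---that integrable modules of the quantum Kac-Moody superalgebra carry the same characters as classically---plays the role of Lusztig's integrability theorem and yields the matching weight-space dimensions for $\fqm$. The principal obstacle I anticipate is the careful sign-tracking needed to verify that $(\cdot,\cdot)_{-1}$ agrees with the super form used in \cite{BKM} up to normalization, and to confirm that non-degeneracy at $\pi=-1$ can be extracted from the super character data in exactly the way Lusztig extracts it at $\pi=1$.
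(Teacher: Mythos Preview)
Your proposal is correct and follows essentially the same route as the paper, which itself only sketches the argument by pointing to \cite[33.1]{Lu} and the character result of \cite{BKM}. Two small points worth tightening: the step ``the radical clearly commutes with the specialization'' is true here precisely because $\Q(q)^\pi\cong\Q(q)\times\Q(q)$ via the idempotents $(1\pm\pi)/2$, so the form and its radical split along the two factors---this deserves one sentence rather than the word ``clearly''; and the attribution of the form $(\cdot,\cdot)_{-1}$ to \cite{BKM} is not quite accurate (the super bilinear form is introduced in this paper, not in \cite{BKM}), though as you correctly note later, what you actually need from \cite{BKM} is only the character formula for integrable modules.
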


\begin{cor} $\Rad(\cdot,\cdot)$ is generated by Serre relations.
\end{cor}

\begin{proof} When $\pi=1$ this is proved in \cite[33.1]{Lu}. Using \cite{BKM}, a similar argument proves this in the case $\pi=-1$.
\end{proof}

\begin{rem}
Theorem~\ref{th:KMpres}(a) identifies half of the quantum Kac-Moody
superalgebra associated to the generalized Cartan matrix $A$ in
\S\ref{subsec:GCM} with the quotient of a free algebra by the
radical of an analogue of Lusztig's bilinear form. A similar result
for quantum $\mathfrak{osp}(1|2n)$ was also obtained in \cite{Ya}
using a different normalization of bilinear form such that
$(\theta_i,\theta_i)=1$ for $i$ odd and with additional signs
appearing in the definition of the form on $\fqm\otimes\fqm$ (see also \cite{Gr}).
\end{rem}

We will write $\fap$ for the $\A^{\pi}$-subalgebra of $\fqp$
generated by the divided powers $\te_i^{(k)}=\te_i^k/[k]_i!$ (see
\eqref{eq:pi} for $[k]_i$), subject to the quantum Serre relation
\eqref{eq:serretheta}.

\begin{dfn}\label{D:Bar}
Under the assumption (C1)-(C6) for the GCM $A$, we define the bar
involution $\bar{\quad}:\fqp\to\fqp$ by letting
\begin{equation}  \label{eq:bar}
\ov{\pi}=\pi, \qquad \ov{q} =\pi q^{-1}, \qquad \ov{\te_i} =\te_i
\quad (\forall i\in I).
\end{equation}
\end{dfn}
We have that
$$
\ov{q_i}=\pi^{s_i}q_i^{-1},
$$
and a calculation gives
$$
\ov{[k]_i}=\pi_i^{(k-1)}\pi^{s_i(k-1)}[k]_i=\pi^{(s_i+p(i))(k-1)}[k]_i.
$$
It now follows from (C6) that the quantum integers $[k]_i$ are
bar-invariant, and so the divided powers $\te_i^{(k)}$ are
bar-invariant as well. Thus, this induces a bar-involution
$\bar{\quad}:\fap\to\fap$.

\section{Spin quiver Hecke algebras}\label{S:sqHa}

\subsection{Generators and relations}\label{subsec:gen&rel}

Fix an $\ell\times\ell$ GCM $A$ as in $\S$\ref{subsec:GCM}, and
continue to assume (C6) as usual. Let $n\in\Z_{\geq0}$, and assume
for $\nu\in Q^+$ (see \eqref{E:RootLattice}) that
$\nu=n_1\af_1+\cdots+n_\ell\af_\ell$ and $n_1+\cdots+n_\ell=n$ (i.e.
$\nu$ has \emph{height} $\height(\nu)=n$). Let $I^\nu\subset I^n$ be
the $S_n$-orbit of the element
$$
(\underbrace{1,\ldots,1}_{n_1},\ldots,\underbrace{\ell,\ldots,\ell}_{n_\ell}),
$$
where $S_n$ acts on $I^n$ by place permutation:
$w\cdot(i_1,\ldots,i_n)=(i_{w(1)},\ldots,i_{w(n)})$. Equivalently,
\begin{align}\label{E:Inu}
I^\nu=\{ \ui=(i_1,\ldots,i_n)\in I^n \,|\, \af_{i_1}+\cdots+\af_{i_n}=\nu \}
\end{align}

We now define an algebra based on the data above in terms of
generators and relations. When $I_\one=\emptyset$, the algebra is
nothing but the quiver Hecke algebra of Khovanov-Lauda and Rouquier
\cite{KL1, Ro1}. In the general case $I_\one\neq\emptyset$ we are
considering, these algebras were recently defined in \cite{KKT}. We
refer to them as \emph{spin} quiver Hecke algebras as explained in
the introduction. In the special case when $I=I_\one$ is a
singleton, the algebra is the spin nilHecke algebra, a nil version
of the spin Hecke algebra first introduced in \cite[3.3]{Wa}. The
spin quiver Hecke algebra is defined to be
\[
\Hm=\bigoplus_{\nu\in Q^+}\Hm(\nu),
\]
where $\Hm(\nu)$ is the unital $\bbk$-algebra, with identity
$1_\nu$, given by generators and relations as described below.

The generators of $\Hm(\nu)$ are
\[
\{e(\ui)|\ui\in I^\nu\}\cup\{y_1,\ldots,
y_n\}\cup\{\tau_1,\ldots,\tau_{n-1}\}.
\]
We refer to the $e(\ui)$ as \emph{idempotents}, the $y_r$ as
\emph{(skew) Jucys-Murphy elements}, and the $\tau_r$ as \emph{intertwining
elements}. Indeed, these generators are subject to the following
relations for all $\ui,\uj\in I^\nu$ and all admissible $r,s$:
{\allowdisplaybreaks
\begin{align}
\label{E:Relations1}e(\ui)e(\uj)&=\dt_{\ui,\uj}e(\ui);\\
\label{E:Relations2}\sum_{\ui\in I^\nu}e(\ui)&=1_\nu;\\
\label{E:Relations3}y_re(\ui)&=e(\ui)y_r;\\
\label{E:Relations4}\tau_re(\ui)&=e(s_r\cdot\ui)\tau_r;\\
\label{E:Relations5}y_ry_se(\ui)&=(-1)^{p(i_r)p(i_s)}y_sy_re(\ui);\\
\label{E:Relations6}
 \tau_ry_se(\ui)
 &=(-1)^{p(i_r)p(i_{r+1})p(i_s)}y_s\tau_re(\ui)&\mbox{if }s\neq r,r+1;\\
\label{E:Relations7}
 \tau_r\tau_se(\ui)
 &=(-1)^{p(i_r)p(i_{r+1})p(i_s)p(i_{s+1})}\tau_s\tau_re(\ui)
 &\mbox{if }|s-r|>1;\\
\label{E:Relations8}
 \tau_ry_{r+1}e(\ui)&=\begin{cases}((-1)^{p(i_r)p(i_{r+1})}y_r\tau_r+1)e(\ui)\\
 (-1)^{p(i_r)p(i_{r+1})}y_r\tau_re(\ui)\end{cases}
 &\begin{array}{r}i_r=i_{r+1},\\i_r\neq
  i_{r+1};\end{array}\\
\label{E:Relations9}
 y_{r+1}\tau_re(\ui)&=\begin{cases}((-1)^{p(i_r)p(i_{r+1})}\tau_ry_r+1)e(\ui)\\
 (-1)^{p(i_r)p(i_{r+1})}\tau_ry_re(\ui)\end{cases}
 &\begin{array}{r}i_r=i_{r+1},\\i_r\neq
                        i_{r+1}.\end{array}
\end{align}
 }
Additionally, the intertwining elements satisfy the quadratic relations
\begin{align}\label{E:Quadratic Relations}
\tau_r^2e(\ui)=\calQ_{i_r,i_{r+1}}(y_r,y_{r+1})e(\ui)
\end{align}
for all $1\leq r\leq n-1$, and the braid-like relations
\begin{align}\label{E:Braid Relations}
(\tau_r&\tau_{r+1}\tau_r-\tau_{r+1}\tau_r\tau_{r+1})e(\ui)\\
\nonumber =&
 \begin{cases}\left(\frac{\calQ_{i_r,i_{r+1}}(y_{r+2},y_{r+1})
 -\calQ_{i_r,i_{r+1}}(y_r,y_{r+1})}{y_{r+2}-y_r} \right)
  e(\ui)&\mbox{if }i_r=i_{r+2}\in I_\zero,\\
    (-1)^{p(i_{r+1})}(y_{r+2}-y_r)
  \left(\frac{\calQ_{i_r,i_{r+1}}(y_{r+2},y_{r+1})
  -\calQ_{i_r,i_{r+1}}(y_r,y_{r+1})}{y_{r+2}^2-y_r^2}
  \right)e(\ui)&\mbox{if }i_r=i_{r+2}\in I_\one,\\
    0&\mbox{otherwise,}\end{cases}
\end{align}
for $1\leq r\leq n-2$. A subtle point, explained in \cite{KKT}, is
that in the case $i_r=i_{r+2}\in I_\one$ above, $y_r^2$ and
$y_{r+2}^2$ are even and, consequently, commute with
$y_1,\ldots,y_n$.  Therefore, there is no ambiguity in the
corresponding formula.

Finally, this algebra is bi-graded, with $\Z$-grading given by
\begin{align}\label{E:ZGrading}
\deg e(\ui)=0,\;\;\;\deg
y_re(\ui)
=(\af_{i_r},\af_{i_r}),\andeqn\deg\tau_re(\ui)=-(\af_{i_r},\af_{i_{r+1}}),
\end{align}
and $\Z_2$-grading given by
\begin{align}\label{E:Z2Grading}
p(e(\ui))=0,\;\;\; p(y_re(\ui))
=p(i_r),\andeqn p(\tau_re(\ui))=p(i_r)p(i_{r+1}).
\end{align}

\subsection{An automorphism and antiautomorphism}\label{SS:automorphism}

The following two propositions can be verified directly by
definition.

\begin{prp}\label{P:Automorphismphi}
There is a unique $\bbk$-linear automorphism $\phi:\Hm(\nu)\to
\Hm(\nu)$ given by
\begin{align*}
\phi(e(\ui))=e(w_0\cdot\ui),\;\;   \phi(y_r)& =y_{n-r+1},
 \\
\phi(\tau_re(\ui))= (-1)^{1+p(i_r)r(i_{r+1})}
&\tau_{n-r}e(s_rw_0\cdot\ui),
\end{align*}
where $w_0\in S_n$ is the longest element.
\end{prp}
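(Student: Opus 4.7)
The plan is to use the presentation of $\Hm(\nu)$ by generators and relations: it suffices to check that the prescribed assignments on the generators respect every defining relation \eqref{E:Relations1}--\eqref{E:Braid Relations}, so that $\phi$ extends uniquely to a $\bbk$-algebra map; bijectivity then follows by showing $\phi^2=\mathrm{id}$. The combinatorial backbone of the argument is the identity $w_0(k)=n+1-k$, which yields $s_rw_0=w_0s_{n-r}$ and $(w_0\cdot\ui)_{n-r}=i_{r+1}$, $(w_0\cdot\ui)_{n-r+1}=i_r$. In particular, the labels in positions $n-r$ and $n-r+1$ of $w_0\cdot\ui$ are swapped versions of the labels in positions $r$ and $r+1$ of $\ui$, which is precisely what is needed for the target $\tau_{n-r}$ to interact correctly with the quadratic and braid relations.

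First I would dispatch the easy families. Relations \eqref{E:Relations1}--\eqref{E:Relations4} follow from $w_0\colon I^\nu\to I^\nu$ being a bijection together with $s_rw_0=w_0s_{n-r}$. The parity-twisted commutation relations \eqref{E:Relations5}--\eqref{E:Relations7} and the mixed relations \eqref{E:Relations8}--\eqref{E:Relations9} are verified by tracking parities: since $p((w_0\cdot\ui)_k)=p(i_{n-k+1})$, the $\Z_2$-signs match, while the scalar $(-1)^{1+p(i_r)p(i_{r+1})}$ contributes in even powers when only Jucys--Murphy generators and a single $\tau$ appear, so it simply cancels on both sides. For the quadratic relation \eqref{E:Quadratic Relations}, $\phi(\tau_r)^2e(\ui)$ acquires the square $(-1)^{2(1+p(i_r)p(i_{r+1}))}=1$ and equals $\tau_{n-r}^2e(w_0\cdot\ui)=\calQ_{i_{r+1},i_r}(y_{n-r},y_{n-r+1})e(w_0\cdot\ui)$, which by Lemma~\ref{L:Qconditions}(c) equals $\calQ_{i_r,i_{r+1}}(y_{n-r+1},y_{n-r})e(w_0\cdot\ui)=\phi\bigl(\calQ_{i_r,i_{r+1}}(y_r,y_{r+1})e(\ui)\bigr)$; here one also uses that $y_{n-r+1}$ and $y_{n-r}$ satisfy the correct skew-commutation governed by the labels $i_r,i_{r+1}$, inherited from \eqref{E:Relations5}.

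The main obstacle is the braid relation \eqref{E:Braid Relations}. Applying $\phi$ to $(\tau_r\tau_{r+1}\tau_r-\tau_{r+1}\tau_r\tau_{r+1})e(\ui)$, the cubic sign $(-1)^{3(1+p(i_r)p(i_{r+1}))+3(1+p(i_{r+1})p(i_{r+2}))}$ together with a rearrangement against relations \eqref{E:Relations4}--\eqref{E:Relations7} converts the image into $\pm(\tau_{n-r}\tau_{n-r-1}\tau_{n-r}-\tau_{n-r-1}\tau_{n-r}\tau_{n-r-1})$ applied to an idempotent whose labels in positions $n-r-1, n-r, n-r+1$ are $i_{r+2},i_{r+1},i_r$. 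Since $i_r=i_{r+2}$, the same branch of \eqref{E:Braid Relations} applies to both sides; the symmetry $\calQ_{ji}(v,u)=\calQ_{ij}(u,v)$ identifies the polynomial output, and in the odd case $i_r\in I_\one$ the factor $(-1)^{p(i_{r+1})}$ on the right is exactly what is needed to balance the cubic sign. I expect this sign accounting to be the heart of the verification.

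Finally, to conclude that $\phi$ is an automorphism I would verify $\phi^2=\mathrm{id}$ directly on generators: $\phi^2(e(\ui))=e(w_0^2\cdot\ui)=e(\ui)$ because $w_0^2=1$; $\phi^2(y_r)=y_{n-(n-r+1)+1}=y_r$; and setting $\uj=s_rw_0\cdot\ui$, one has $p(j_{n-r})=p(i_r)$ and $p(j_{n-r+1})=p(i_{r+1})$, so the two copies of $(-1)^{1+p(i_r)p(i_{r+1})}$ multiply to $+1$, while $s_{n-r}w_0s_rw_0=s_{n-r}\cdot s_{n-r}=1$, giving $\phi^2(\tau_re(\ui))=\tau_re(\ui)$. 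Hence $\phi$ is an involution, therefore invertible, and the homomorphism constructed above is the desired automorphism. Uniqueness is automatic from the fact that the listed elements generate $\Hm(\nu)$.
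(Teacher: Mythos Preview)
Your approach is exactly what the paper intends: the authors' entire proof reads ``can be verified directly by definition,'' and your plan of checking each defining relation \eqref{E:Relations1}--\eqref{E:Braid Relations} on generators and then establishing bijectivity via $\phi^2=\mathrm{id}$ is precisely that direct verification spelled out. Your treatment of the idempotent, polynomial, and quadratic relations, and your computation that $\phi^2$ fixes every generator (using $w_0^2=1$, $s_{n-r}w_0=w_0s_r$, and the parity bookkeeping $p(j_{n-r})=p(i_r)$, $p(j_{n-r+1})=p(i_{r+1})$) are correct.

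One small correction to your braid-relation sketch: the cubic sign you wrote, $(-1)^{3(1+p(i_r)p(i_{r+1}))+3(1+p(i_{r+1})p(i_{r+2}))}$, is not quite right. When you factor $\tau_r\tau_{r+1}\tau_r e(\ui)$ as $\tau_r e(s_{r+1}s_r\cdot\ui)\cdot\tau_{r+1}e(s_r\cdot\ui)\cdot\tau_r e(\ui)$ and apply $\phi$ to each piece, the three sign contributions are $(-1)^{1+p(i_r)p(i_{r+1})}$, $(-1)^{1+p(i_r)p(i_{r+2})}$, and $(-1)^{1+p(i_r)p(i_{r+2})}$ (the middle and outer factors see the labels $(i_r,i_{r+2})$ and $(i_{r+2},i_r)$ after the partial shuffles), and similarly for $\tau_{r+1}\tau_r\tau_{r+1}$. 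Since the only nontrivial case has $i_r=i_{r+2}$, these collapse correctly and the sign matches the $(-1)^{p(i_{r+1})}$ appearing on the right of \eqref{E:Braid Relations} in the odd branch. You flagged this as the delicate step, and it is; once you redo the sign tally with the correct intermediate idempotents the verification goes through.
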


\begin{prp}\label{P:Anti-AutomorphismPsi}
There is a unique $\bbk$-linear anti-automorphism $\psi:\Hm(\nu)\to
\Hm(\nu)$ defined by $\psi(e(\ui))=e(\ui)$, $\psi(y_r)=y_r$, and
$\psi(\tau_s)=\tau_s$ for all $\ui\in I^\nu$ and admissible $r,s$.
\end{prp}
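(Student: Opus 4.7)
The plan is to establish existence; uniqueness is automatic since an anti-automorphism of an algebra is determined by its values on a generating set. For existence I would define $\psi$ on the free associative algebra generated by the symbols $e(\ui)$, $y_r$, $\tau_s$ via the anti-multiplicative extension $\psi(ab) = \psi(b)\psi(a)$, and then verify that $\psi$ carries every defining relation of $\Hm(\nu)$ to an identity already valid in $\Hm(\nu)$.

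A preliminary observation streamlines the bookkeeping: because $\psi$ fixes each generator and acts trivially on the $\bbk$-scalars (including every $(-1)^{\bullet}$ that appears), $\psi$ applied to a monomial merely reverses the order of its factors. So for each defining relation $A = B$, the content is to verify the reversed identity in $\Hm(\nu)$. The idempotent relations \eqref{E:Relations1}, \eqref{E:Relations2} are manifestly symmetric; the (super)commutation relations \eqref{E:Relations5}, \eqref{E:Relations6}, \eqref{E:Relations7} reverse to relations of the same shape with the two sides swapped; and \eqref{E:Relations3}, \eqref{E:Relations4} reduce to their originals after repositioning the idempotent via $\tau_r e(\ui) = e(s_r\cdot\ui)\tau_r$. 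The Leibniz-type pair \eqref{E:Relations8}, \eqref{E:Relations9} is exchanged by $\psi$: a short manipulation using \eqref{E:Relations4} converts the image of one into the other.

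The main obstacle is the quadratic relation \eqref{E:Quadratic Relations} and the braid-like relation \eqref{E:Braid Relations}. The reversed quadratic identity $e(\ui)\tau_r^2 = e(\ui)\calQ_{i_r,i_{r+1}}(y_r,y_{r+1})$ follows from the original, since $\tau_r^2$ commutes with $e(\ui)$ (as $\tau_r^2 e(\ui) = e(s_r^2\cdot\ui)\tau_r^2 = e(\ui)\tau_r^2$) and $\calQ_{i_r,i_{r+1}}(y_r,y_{r+1})$ commutes with $e(\ui)$ by \eqref{E:Relations3}. For the braid relation I would use \eqref{E:Relations4} to rewrite $e(\ui)(\tau_r\tau_{r+1}\tau_r - \tau_{r+1}\tau_r\tau_{r+1}) = (\tau_r\tau_{r+1}\tau_r - \tau_{r+1}\tau_r\tau_{r+1})e(s_rs_{r+1}s_r\cdot\ui)$; in the nontrivial cases $i_r = i_{r+2}$ the permuted index $s_rs_{r+1}s_r\cdot\ui$ equals $\ui$ (the entry at position $r+1$ is fixed, and the hypothesis gives agreement at positions $r, r+2$), so $\psi$ applied to the left side of \eqref{E:Braid Relations} reproduces the original left side. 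It then remains to show that $\psi$ fixes the right-hand polynomial, and this is the most delicate point: invoking Lemma~\ref{L:Qconditions}(d) (together with its analogue in $v$, obtained from Lemma~\ref{L:Qconditions}(c)), $\calQ_{i_r,i_{r+1}}(u,v)$ is a polynomial in $u^2$ when $i_r \in I_\one$, and by (C6) together with the skew-commutation $uv=-vu$, also a polynomial in $v^2$ when $i_{r+1}\in I_\one$. Consequently the quotient appearing in \eqref{E:Braid Relations} lies in the subalgebra generated by the central even elements $y_r^2, y_{r+2}^2$ and by $y_{r+1}$ (or $y_{r+1}^2$, when $i_{r+1}\in I_\one$); this quotient is therefore $\psi$-fixed and commutes with the outer factor $(y_{r+2}-y_r)$, so the right-hand side of \eqref{E:Braid Relations} is $\psi$-invariant, completing the verification.
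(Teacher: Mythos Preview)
Your approach is correct and is precisely what the paper has in mind: the paper offers no argument beyond the remark that Propositions~\ref{P:Automorphismphi} and~\ref{P:Anti-AutomorphismPsi} ``can be verified directly by definition,'' and you have supplied that verification in detail.

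One small point deserves tightening. In your treatment of the quadratic relation \eqref{E:Quadratic Relations} you assert that the reversed identity is $e(\ui)\tau_r^2 = e(\ui)\calQ_{i_r,i_{r+1}}(y_r,y_{r+1})$, but applying $\psi$ to the right-hand side actually yields $e(\ui)\,\psi\big(\calQ_{i_r,i_{r+1}}(y_r,y_{r+1})\big)$, with the monomials in $y_r,y_{r+1}$ written in reversed order. You still need to observe that $\psi$ fixes $\calQ_{i_r,i_{r+1}}(y_r,y_{r+1})$ as an element of the algebra. This follows by the same parity analysis you carry out for the braid relation: if $p(i_r)p(i_{r+1})=0$ then $y_r,y_{r+1}$ commute on $e(\ui)$ and there is nothing to check, while if $i_r,i_{r+1}\in I_\one$ then the proof of Lemma~\ref{L:Qconditions} shows $\calQ_{i_r,i_{r+1}}(u,v)$ is a polynomial in $u^2,v^2$, and $y_r^2,y_{r+1}^2$ are central among the $y$'s. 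With this remark added, your verification is complete.
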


\subsection{The rank 1 case}\label{subsec:rk1}

Note that the diagram of a single node $\xy(0,0)*{\bullet};\endxy$
corresponds to Lie superalgebra $\osp(1|2)$. In this section, we
will also consider the diagram $\xy(0,0)*{\circ};\endxy$ which
corresponds to $\Sl(2)$. These are quivers with compatible
automorphism (the trivial one) and the corresponding GCM is $A=(2)$,
and there is only one polynomial $\calQ(u,v)\equiv 0$.

In the even case $\circ$, the quiver Hecke algebra is the nilHecke
algebra $\NH_n$ generated by subalgebras $\bbk[Y]$ for
$Y=\{y_1,\ldots,y_n\}$ and the nil-Coxeter algebra $\NC_n$. The
nil-Coxeter algebra is generated by the divided difference operators
$\del_r^+$, $1\leq r<n$, which are subject to the relations
\begin{align}\label{E:nilCox}
\nonumber(\del_r^+)^2&=0,\\
\del_r^+\del_s^+&=\del_s^+\del_r^+,\;\;\;|r-s|>1,\\
\nonumber\del_r^+\del_{r+1}^+\del_r^+&=\del^+_{r+1}\del_r^+\del^+_{r+1}.
\end{align}
The nil-Coxeter algebra act faithfully on $\bbk[Y]$ via
\begin{align}\label{E:DivDiff+}
\del_r^+\mapsto \frac{1-s_r}{y_{r+1}-y_{r}},
\end{align}
where the simple transposition acts on polynomials by permuting the
variables as usual.

For the odd case, the corresponding spin (or odd) nilHecke algebra
$\NHm_n$ is a nil version of the spin Hecke algebra in
\cite[3.3]{Wa} and we follow the presentation in \cite[2.2]{EKL}
below. Let $\bbk[Y]^-$ be a skew-polynomial ring in $n$ variables
(that is, the quotient of the free algebra on $Y=\{y_1,\ldots,y_n\}$
by the relation $y_ry_s=-y_sy_r$ for $r\neq s$). Then, $\NHm_n$ is
generated by subalgebras $\bbk[Y]^-$ and the spin (or odd)
nil-Coxeter algebra $\NCm_n$. The spin nil-Coxeter algebra is
generated by an odd analogue of divided difference operators
$\del_r^-$, $1\leq r<n$, which are subject to the relations
\begin{align}\label{E:spinnilCox}
\nonumber(\del_r^-)^2&=0,\\
\del_r^-\del_s^-&=-\del_s^-\del_r^-,\;\;\;|r-s|>1,\\
\nonumber\del_r^-\del_{r+1}^-\del_r^-&=\del^-_{r+1}\del_r^-\del^-_{r+1}.
\end{align}
The spin nil-Coxeter algebra acts faithfully on $\bbk[Y]^-$ as in
the even case, where
\begin{align}\label{E:DivDiff-1}
\del_r^-(y_s)=
\begin{cases}1&\mbox{if }s=r,r+1,\\0&\mbox{otherwise,}\end{cases}
\end{align}
and the action of $\del_r$ extends to $\bbk[Y]^-$ using the
\emph{Leibnitz rule}: for $f,g\in\bbk[Y]^-$,
\begin{align}\label{E:DivDiff-2}
\del_r^-(fg)=\del_r^-(f)g+s_r(f)\del_r^-(g).
\end{align}
Here the action of $S_n$ on $\bbk[Y]^-$ is given by
$s_r(y_k)=-y_{s_r(k)}$.

\subsection{A basis theorem}\label{subsec:BasisThm}
We now show that $\Hm(\nu)$ satisfies the PBW property. More
precisely, we have the following.

\begin{thm}\label{T:PBW}
For each $w\in S_n$, fix a reduced expression $w=s_{i_1}\cdots
s_{i_r}$ for $w$, and let $\tau_w=\tau_{i_1}\cdots\tau_{i_r}$. Set
\begin{align}\label{E:PBW Basis}
\mathcal{B}=\{\tau_w y_1^{a_1}\cdots y_n^{a_n}e(\ui)\,|\,\ui\in
I^\nu,\;(a_1,\ldots,a_n)\in\Z_{\geq0}^n,\;w\in S_n\}.
\end{align}
Then, $\Hm(\nu)$ is free over $\bbk$ with basis $\mathcal{B}$.
\end{thm}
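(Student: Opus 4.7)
The plan is to prove the statement in two stages, following the now-standard pattern for KLR-type PBW theorems (\cite{KL1, Ro1}) adapted to the super setting: first show $\mathcal{B}$ spans $\Hm(\nu)$ by a rewriting argument, then establish linear independence via a faithful action on a mixed (skew-)polynomial module.

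For spanning, I would argue by double induction on the word length and on the Coxeter length of the $\tau$-subword. The idempotent relations \eqref{E:Relations1}--\eqref{E:Relations4} let one consolidate idempotents and push them to the right past $y$'s (which commute with them by \eqref{E:Relations3}) and past $\tau_r$'s (which shift them by $s_r$). Relations \eqref{E:Relations5}--\eqref{E:Relations7} and \eqref{E:Relations8}--\eqref{E:Relations9} then allow any $y_s$ to be moved past a $\tau_r$ at the cost either of a super sign (when $s \neq r,r+1$ or $i_r \neq i_{r+1}$), or of a shorter word with the $\tau_r$ deleted (when $i_r=i_{r+1}$). The quadratic relation \eqref{E:Quadratic Relations} and braid-like relation \eqref{E:Braid Relations} then reduce any word in $\tau_1,\ldots,\tau_{n-1}$ to a chosen $\tau_w$ modulo words with strictly smaller Coxeter length but possibly higher $y$-degree, so the induction closes. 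This shows that every element of $\Hm(\nu)$ is a $\bbk$-linear combination of elements of $\mathcal{B}$, independent of the chosen reduced expressions.

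For linear independence, I would construct an explicit faithful representation generalizing the rank-one constructions of \S\ref{subsec:rk1}. For each $\ui \in I^\nu$ set
$$P_{\ui} = \bbk\langle y_1,\ldots,y_n\rangle\big/\bigl(y_ry_s - (-1)^{p(i_r)p(i_s)} y_s y_r\bigr),$$
and let $M = \bigoplus_{\ui \in I^\nu} P_{\ui}$. Let $e(\uj)$ act as projection onto $P_{\uj}$, let $y_r$ act by left multiplication on each $P_{\ui}$, and let $\tau_r$ send $f \cdot e(\ui) \in P_{\ui}$ to an element of $P_{s_r\cdot \ui}$ defined by: if $i_r = i_{r+1}$, apply the even or odd divided difference operator $\partial^{\pm}_r$ of \S\ref{subsec:rk1} (chosen by $p(i_r)$); if $i_r \neq i_{r+1}$, apply a signed renaming of the variables and multiply by the appropriate factor built from $\calQ_{i_r,i_{r+1}}(y_r, y_{r+1})$. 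Lemma~\ref{L:Qconditions} ensures $\calQ$ lives in the correct $\bbk_{ij}\{u,v\}$ and has the right parity, so all the formulas are well-defined. Verifying that this action satisfies the defining relations reduces to the Leibnitz rules \eqref{E:DivDiff-2} and the identities $(\partial^{\pm}_r)^2 = 0$ from \eqref{E:nilCox}--\eqref{E:spinnilCox}. Faithfulness of $\mathcal{B}$ on $M$ then follows by filtering by polynomial degree and Coxeter length: the element $\tau_w y_1^{a_1}\cdots y_n^{a_n} e(\ui)$ acting on $1 \in P_{\ui}$ lands in the summand $P_{w\cdot \ui}$ with leading term (in both filtrations) the distinct monomial $\partial^{\pm}_w(y_1^{a_1}\cdots y_n^{a_n})$, so any linear relation in $\mathcal{B}$ must be trivial.

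The main obstacle will be the braid-like relation \eqref{E:Braid Relations} in the odd cases, where the signs require careful handling of the factorization $y_{r+2}^2 - y_r^2 = (y_{r+2}-y_r)(y_{r+2}+y_r)$ in the skew-polynomial ring (together with the key fact, noted in \S\ref{subsec:gen&rel}, that $y_r^2$ and $y_{r+2}^2$ are even and central). A secondary nuisance is the bookkeeping of super signs produced by composing the action of $\tau$'s through summands $P_{\ui}$ with different parity sequences, which must match precisely the signs in \eqref{E:Relations6}--\eqref{E:Relations9}; once this is done, the rest of the verification is essentially the spin nilHecke computation of \cite{EKL} performed locally at each pair of adjacent nodes, combined with the original KLR argument for nodes of differing colour.
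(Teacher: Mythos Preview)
Your approach is essentially the paper's: the paper constructs the faithful polynomial representation on $\curlyP^-(\nu)=\bigoplus_{\ui}P_{\ui}$ (Proposition~\ref{P:PolynomialRep}) and deduces the PBW theorem from it, deferring the verification of relations to \cite{KL1,Ro1}. One concrete detail worth noting: rather than leaving vague ``the appropriate factor built from $\calQ_{i_r,i_{r+1}}$'', the paper fixes a total order on $I$ and defines the asymmetric matrix $\mathsf{P}$ in \eqref{E:Pmatrix}, so that $\tau_r$ acts (when $i_r\neq i_{r+1}$) by $\mathsf{P}_{i_r,i_{r+1}}(y_{r+1},y_r)e(s_r\cdot\ui)s_r$; this makes the quadratic relation $\tau_r^2=\calQ_{i_r,i_{r+1}}$ come out correctly since $\mathsf{P}_{ij}\mathsf{P}_{ji}=\calQ_{ij}$. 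Your faithfulness sketch (acting on $1\in P_{\ui}$ and reading off $\partial_w^\pm(y^{\underline{a}})$ as a ``distinct monomial'') is not literally right---divided differences kill low-degree monomials---so the actual argument requires the degree/length triangularity more carefully, but this is exactly the standard KLR computation you cite.
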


Note that by \eqref{E:Braid Relations} this depends on the choice of
reduced expression. However, we have the following proposition, the
proof of which is almost identical to \cite[Proposition 2.5]{BKW}.

\begin{prp}\label{P:Dependence on Reduced Words} Let $\ui\in I^\nu$ and $w\in
S_n$. Assume that $w=s_{k_1}\cdots s_{k_t}$ is a presentation of $w$ as a
product of simple transpositions.
\begin{enumerate}
\item[(a)]
If the presentation $w=s_{k_1}\cdots s_{k_t}$ is reduced, and
$w=s_{\ell_1}\cdots s_{\ell_t}$ is another reduced presentation of
$w$, then
\[
\tau_{k_1}\cdots\tau_{k_t}e(\ui)
=\pm\tau_{\ell_1}\cdots\tau_{\ell_t}e(\ui)+\sum_{u<w}\tau_uf_u(y)e(\ui)
\]
where the sum is over $u<w$ in the Bruhat order, $f_u(y)$ is a
polynomial in $y_1,\ldots,y_n$, and
$\deg\tau_uf_u(y)e(\ui)=\deg\tau_we(\ui)$ for all $u$.

\item[(b)]
If the presentation $w=s_{k_1}\cdots s_{k_t}$ is not reduced, then
$\tau_{k_1}\cdots\tau_{k_t}e(\ui)$ can be written as a linear
combination of words of the form
$\tau_{k_{r_1}}\cdots\tau_{k_{r_s}}f(y)e(\ui)$, such that $1\leq
r_1<\cdots<r_s\leq t$, $s<t$, $w=s_{k_{r_1}}\cdots s_{k_{r_s}}$ is
reduced, $f(y)$ is a polynomial in $y_1,\ldots y_n$, and
\[
\deg\tau_{k_1}\cdots\tau_{k_t}e(\ui)
=\deg\tau_{k_{r_1}}\cdots\tau_{k_{r_s}}f(y)e(\ui).
\]
\end{enumerate}
\end{prp}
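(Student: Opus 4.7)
The plan is to adapt to the super setting the classical argument for KLR algebras in \cite[Proposition 2.5]{BKW}, grounded in Matsumoto's theorem. Any two reduced expressions of $w\in S_n$ are related by a finite sequence of commutation moves $s_as_b\leftrightarrow s_bs_a$ (with $|a-b|>1$) and three-term braid moves $s_as_{a+1}s_a\leftrightarrow s_{a+1}s_as_{a+1}$, and any non-reduced word can, via such moves, be transformed into one containing an adjacent repetition $s_as_a$. In the algebra $\Hm(\nu)$ these three operations are governed respectively by \eqref{E:Relations7} (commutation, producing only a sign), \eqref{E:Braid Relations} (three-term braid, producing a sign together with a possible polynomial correction), and \eqref{E:Quadratic Relations} (adjacent square, producing a polynomial).

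For part (a), I would induct on $t=\ell(w)$, the base cases $t\le 2$ being immediate. For the inductive step, write the passage between the two reduced expressions as a chain of braid transformations and process them one at a time. A commutation move contributes only the sign from \eqref{E:Relations7}. A three-term braid move applied inside a longer $\tau$-string gives, via \eqref{E:Braid Relations}, $\tau_{k_1}\cdots\tau_{k_t}e(\ui)=\pm\tau_{\ell_1}\cdots\tau_{\ell_t}e(\ui)+X$, where $X$ is either zero (when the relevant $i_r\neq i_{r+2}$) or a $\tau$-string in which a polynomial in the $y_r$'s has been inserted in place of the three consecutive intertwiners. Commuting this polynomial out of the remaining outer $\tau$'s using \eqref{E:Relations6}, \eqref{E:Relations8}, \eqref{E:Relations9} either preserves the $\tau$-length up to a sign, or decreases it by one at the cost of an additional polynomial factor. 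The resulting terms all lie in the span of $\tau_uf_u(y)e(\ui)$ with $\ell(u)<\ell(w)$, and standard Bruhat-combinatorial facts allow one to arrange in fact $u<w$. Matching of $\Z$-degrees follows because every defining relation is homogeneous for \eqref{E:ZGrading}; in particular the polynomial correction in \eqref{E:Braid Relations} carries exactly the $\Z$-degree needed to match that of $\tau_r\tau_{r+1}\tau_re(\ui)$.

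For part (b), if $s_{k_1}\cdots s_{k_t}$ is not reduced, the strong exchange property in $S_n$ produces $1\le j_1<j_2\le t$ whose deletion preserves $w$. Using part (a) I would transform the given word, modulo polynomial corrections supported on strictly shorter $\tau$-words, into one containing an adjacent pattern $\tau_a\tau_a$. Applying \eqref{E:Quadratic Relations} then replaces this square by $\calQ_{i_a,i_{a+1}}(y_a,y_{a+1})$, producing a word in at most $t-2$ intertwiners with a polynomial in the $y$'s appended; when the new labels satisfy $i_a=i_{a+1}$ this factor vanishes by Lemma~\ref{L:Qconditions}(b). A double induction (on $t$ externally, and on $\ell(w)$ internally using part (a)) then expresses $\tau_{k_1}\cdots\tau_{k_t}e(\ui)$ as a linear combination of elements $\tau_{k_{r_1}}\cdots\tau_{k_{r_s}}f(y)e(\ui)$ indexed by reduced sub-expressions of strictly smaller length. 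Degrees match by the same homogeneity argument as in (a).

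The main obstacle is the careful bookkeeping of super signs introduced by \eqref{E:Relations5}--\eqref{E:Relations9} and the parity-dependent factor $(-1)^{p(i_{r+1})}$ in \eqref{E:Braid Relations}, together with the fact that $y_r$ and $y_s$ anticommute whenever both $i_r$ and $i_s$ are odd, so that the polynomial corrections above live in a skew-polynomial subalgebra. These signs affect only the ambiguous $\pm$ in (a) and the precise scalar coefficients of the $f_u(y)$ and $f(y)$, so the structural conclusions—existence of lower-order correction terms and matching of $\Z$-degrees—are unaffected, and the argument of \cite[Proposition 2.5]{BKW} transfers with only minor notational modifications.
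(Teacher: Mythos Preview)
Your proposal is correct and follows exactly the route the paper indicates: the paper does not give a proof at all but simply states that it ``is almost identical to \cite[Proposition 2.5]{BKW},'' which is precisely the Matsumoto-based induction you outline. Your additional remarks on tracking the super signs from \eqref{E:Relations5}--\eqref{E:Relations9} and the parity factor in \eqref{E:Braid Relations} are the appropriate adaptations, and as you note they only affect the overall $\pm$ and the scalar coefficients, not the structure of the argument.
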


We now turn to the proof of Theorem \ref{T:PBW}. To this end, let
$\curlyP^-(\nu)$ be the subalgebra of $\Hm(\nu)$ generated by the
elements $y_1,\ldots,y_n$ and $e(\ui)$, $i\in I^\nu$. Fix a total
ordering on $I$, and define a family of polynomials
$\mathsf{P}=(\mathsf{P}_{ij}(u,v))_{i,j\in I}$ by the formula
\begin{align}\label{E:Pmatrix}
\mathsf{P}_{ij}(u,v)=\begin{cases}0&\mbox{if }i=j,\\
                        \calQ_{ij}(u,v)&\mbox{if }i<j,\\
                        1&\mbox{if }i>j.\end{cases}
\end{align}
Then, Theorem \ref{T:PBW} follows immediately from the following
proposition.

\begin{prp}\label{P:PolynomialRep}
There is a faithful action of the algebra $\Hm(\nu)$ on
$\curlyP^-(\nu)$, where $y_s$ and $e(\ui)$ act by left
multiplication ($s=1,\ldots,n$, $\ui\in I^\nu$), and
$$
\tau_re(\ui)\mapsto
 \begin{cases}\del_r^+e(\ui)&\mbox{if }i_r=i_{r+1}\in I_\zero,\\
 \del_r^-e(\ui)&\mbox{if }i_r=i_{r+1}\in I_\one,\\
 \mathsf{P}_{i_r,i_{r+1}}(y_{r+1},y_{r})e(s_r\cdot\ui)s_r
 &\mbox{otherwise,}
 \end{cases}
$$
where $S_n$ acts on $\curlyP^-(\nu)$ according to
\begin{align}\label{E:SymAction}
s_k(y_re(\ui))=(-1)^{p(i_k)p(i_{k+1})p(i_r)}y_{s_k(r)}e(s_k\cdot\ui).
\end{align}
\end{prp}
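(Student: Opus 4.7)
The plan is to proceed in two stages: first verify that the assignment respects all the defining relations of $\Hm(\nu)$, so that we have a well-defined representation on $\curlyP^-(\nu)$; then establish faithfulness by a leading-term argument that also yields the spanning PBW basis of Theorem~\ref{T:PBW}.

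For well-definedness, I would dispatch relations in increasing order of difficulty. The idempotent relations \eqref{E:Relations1}--\eqref{E:Relations2}, the intertwining relation \eqref{E:Relations4}, and the skew-commutativity \eqref{E:Relations5} of the $y$'s are immediate from left multiplication in the skew-polynomial ring. Relations \eqref{E:Relations3} and \eqref{E:Relations6} amount to checking that the $S_n$-action defined by \eqref{E:SymAction} carries the sign $(-1)^{p(i_r)p(i_{r+1})p(i_s)}$ when $y_s$ is moved past $\tau_r$; this is built into \eqref{E:SymAction}. The disjoint-braid relation \eqref{E:Relations7} is checked by separating the two operators, which act on non-overlapping variable pairs, and tracking the resulting parity sign. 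For \eqref{E:Relations8}--\eqref{E:Relations9}, the case $i_r=i_{r+1}$ uses the Leibniz rules \eqref{E:DivDiff+}, \eqref{E:DivDiff-1}--\eqref{E:DivDiff-2} applied to $y_{r+1}f$ and $y_r\partial_r^{\pm}(f)+f$; the case $i_r\neq i_{r+1}$ is a direct computation of $\mathsf{P}_{i_r,i_{r+1}}(y_{r+1},y_r)\,e(s_r\cdot\ui)\,s_r$ composed with multiplication by $y_{r+1}$ and $y_r$, using the sign conventions of \eqref{E:SymAction}.

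The first substantive point is the quadratic relation \eqref{E:Quadratic Relations}. When $i_r=i_{r+1}$ it reduces to $(\partial_r^{\pm})^2=0$, consistent with $\calQ_{ii}=0$. When $i_r\neq i_{r+1}$ one computes
\[
\bigl(\mathsf{P}_{i_r,i_{r+1}}(y_{r+1},y_r)\,e(s_r\cdot\ui)\,s_r\bigr)^2
  = \mathsf{P}_{i_r,i_{r+1}}(y_{r+1},y_r)\cdot s_r\!\bigl(\mathsf{P}_{i_{r+1},i_r}(y_{r+1},y_r)\bigr)\,e(\ui),
\]
and the product collapses to $\calQ_{i_r,i_{r+1}}(y_r,y_{r+1})e(\ui)$ by \eqref{E:Pmatrix} together with Lemma~\ref{L:Qconditions}(c),(d). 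The hard part is the braid relation \eqref{E:Braid Relations}, which requires case analysis on which of $i_r,i_{r+1},i_{r+2}$ coincide. When these three indices are pairwise distinct, both sides reduce to a product of $\mathsf{P}$-polynomials times $s_rs_{r+1}s_r=s_{r+1}s_rs_{r+1}$, and the two expressions cancel. When $i_r=i_{r+1}\neq i_{r+2}$ (and symmetric variants) one rewrites the triple using the Leibniz rule to move $\partial_r^{\pm}$ past the transposition factor. The truly delicate case is $i_r=i_{r+2}$: the even subcase essentially reproduces the calculation of Khovanov--Lauda \cite{KL1}; the odd subcase additionally requires the skew-commutativity $y_ry_{r+1}=-y_{r+1}y_r$, the centrality of $y_r^2$ and $y_{r+2}^2$ in $\curlyP^-(\nu)$ (noted after \eqref{E:Braid Relations}), and Lemma~\ref{L:Qconditions}(d) to divide by $y_{r+2}^2-y_r^2$ rather than $y_{r+2}-y_r$. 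This sign-bookkeeping is where the argument is most easily derailed.

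For faithfulness, I would argue that the monomials in $\mathcal{B}$ from \eqref{E:PBW Basis} act linearly independently on $\curlyP^-(\nu)$. Fix $\ui\in I^\nu$ and apply $\tau_w\,y_1^{a_1}\!\cdots y_n^{a_n}\,e(\ui)$ to the generator $e(\uj)$ with $\uj=w^{-1}\!\cdot\ui$; using Proposition~\ref{P:Dependence on Reduced Words} to put $\tau_w$ in a fixed reduced form, and iterating the explicit action of $\partial_r^{\pm}$ (which lowers total $y$-degree by one) and of $\mathsf{P}_{ij}(y_{r+1},y_r)s_r$, one extracts a unique leading monomial in the lexicographic ordering on the $y$-exponents and the Bruhat order on $w$. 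Distinct basis elements yield distinct leading monomials, so the action is faithful. Combined with Proposition~\ref{P:Dependence on Reduced Words}, which shows that $\mathcal{B}$ spans $\Hm(\nu)$, this simultaneously establishes Proposition~\ref{P:PolynomialRep} and Theorem~\ref{T:PBW}.
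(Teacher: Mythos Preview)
Your approach is correct and is precisely what the paper's proof amounts to: the paper simply says the argument can be adapted from \cite[Proposition~2.3, Theorem~2.5]{KL1} or \cite[Proposition~3.12]{Ro1}, and your two-stage plan (verify relations, then prove faithfulness via a leading-term argument) is exactly that adaptation with the super signs tracked. One small slip: in the faithfulness step you should apply $\tau_w y_1^{a_1}\cdots y_n^{a_n}e(\ui)$ to test vectors $y^b e(\ui)$ (not $e(\uj)$ with $\uj=w^{-1}\cdot\ui$, which the idempotent $e(\ui)$ would annihilate), but this does not affect the argument.
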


\begin{proof}
The proof can be adapted from either \cite[Proposition 2.3, Theorem
2.5]{KL1}, or \cite[Proposition 3.12]{Ro1}.
\end{proof}

\subsection{The center}\label{subsec:Center}

For $\nu=\sum_{i\in I}n_i\af_i$, let $n_\one=\sum_{i\in I_\one}n_i$.
Let
\begin{align}\label{E:Ynu}
Y_\nu=\{y_r^{1+p(i_r)}e(\ui)|r=1,\ldots,n,\ui\in I^\nu\},
\end{align}
and let $\Sym(\nu)=\bbk[Y_\nu]^{S_n}\subset\curlyP^-(\nu)$, where the
symmetric group $S_n$ acts on $\curlyP^-(\nu)$ by \eqref{E:SymAction}.
Then, we obtain the following by combining the proofs of
\cite[Proposition~ 2.7]{KL1} and \cite[Proposition~ 2.15]{EKL}.

\begin{prp}\label{P:Center}
The center of $\Hm(\nu)$ is $\Sym(\nu)$. Moreover, $\Hm(\nu)$ is
free of rank $2^{n_\one}(n!)^2$ over $\Sym(\nu)$.
\end{prp}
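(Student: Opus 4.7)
The proof proceeds in three steps, adapting \cite[Proposition~2.7]{KL1} (even case) and \cite[Proposition~2.15]{EKL} (pure spin case). The key tools are the faithful polynomial representation of Proposition~\ref{P:PolynomialRep} and the PBW basis of Theorem~\ref{T:PBW}.

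First I would verify $\Sym(\nu)\subseteq Z(\Hm(\nu))$. Each $y_r^{1+p(i_r)}e(\ui)\in Y_\nu$ is $\Z_2$-even, so by \eqref{E:Relations5} it commutes without sign with every $y_s$, and by \eqref{E:Relations3}--\eqref{E:Relations4} it commutes with the idempotents. For commutation with $\tau_r$, iteration of \eqref{E:Relations8}--\eqref{E:Relations9} yields a twisted Leibniz identity: for $f\in\bbk[Y_\nu]$,
\[
\tau_r f(y)e(\ui)=s_r(f)(y)\,\tau_r e(\ui)+R_r(f)(y)\,e(\ui),
\]
where $s_r$ acts as in \eqref{E:SymAction} and $R_r$ is a divided-difference-type operator. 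When $f$ is $S_n$-invariant one has $s_r(f)=f$ and $R_r(f)=0$, yielding centrality. Assumption (C6) enters here to ensure that \eqref{E:SymAction} defines a genuine $S_n$-action on $\bbk[Y_\nu]$.

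For the reverse inclusion, write a central $z$ in PBW form $z=\sum_{w,\ui}\tau_w f_{w,\ui}(y)e(\ui)$ and consider its action on $\curlyP^-(\nu)$ via Proposition~\ref{P:PolynomialRep}. Since the $y_s$ act by (skew) multiplication and the $\tau_r$ by divided-difference operators, the commutation conditions $[z,y_s]=0$ and $[z,\tau_r]=0$ translate into constraints on the $f_{w,\ui}$. A length-induction on $w\in S_n$ in the style of \cite[Proof of Proposition~2.7]{KL1} shows that only the term with $w=e$ survives; the residual condition $[z,\tau_r]=0$ imposes $S_n$-invariance under \eqref{E:SymAction}, and $[z,y_s]=0$ combined with \eqref{E:Relations5} confines each $f_{e,\ui}(y)$ to the commutative subring $\bbk[Y_\nu]$. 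Together these place $z$ in $\Sym(\nu)$.

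The rank $2^{n_\one}(n!)^2$ then follows by stringing together three freeness statements: Theorem~\ref{T:PBW} gives $\Hm(\nu)$ free of rank $n!$ over the polynomial subalgebra $\curlyP^-(\nu)$; the skew-polynomial ring $\curlyP^-(\nu)$ is free of rank $2^{n_\one}$ over its even subalgebra $\bigoplus_\ui\bbk[Y_\nu]e(\ui)$, with basis the squarefree monomials in the odd $y_r$'s; and classical invariant theory gives $\bigoplus_\ui\bbk[Y_\nu]e(\ui)$ free of rank $n!$ over $\Sym(\nu)$. The main obstacle is the induction in Step 2: the non-commutative skew structure together with the sign twists in \eqref{E:SymAction} make the leading-term analysis substantially more delicate than in the purely even \cite{KL1} or purely spin \cite{EKL} settings, and careful parity bookkeeping across the mixed index set $I=I_\zero\sqcup I_\one$ is required to rule out contributions from $w\neq e$ in the PBW expansion.
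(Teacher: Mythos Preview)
Your outline is correct and matches the paper's approach exactly: the paper's proof is a single sentence citing \cite[Proposition~2.7]{KL1} and \cite[Proposition~2.15]{EKL}, and your three steps are precisely how those two arguments combine in the mixed setting. Two small slips that do not affect the argument: assumption~(C6) plays no role in checking that \eqref{E:SymAction} gives an $S_n$-action (only parities enter there), and $\bigoplus_\ui\bbk[Y_\nu]e(\ui)$ is not the full even part of $\curlyP^-(\nu)$ (for instance $y_ry_se(\ui)$ with $i_r,i_s\in I_\one$ is even but not in it) --- nevertheless your freeness-of-rank-$2^{n_\one}$ claim over this subring with squarefree-monomial basis is correct, so the rank count $n!\cdot 2^{n_\one}\cdot n!$ goes through.
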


\section{Categorification of quantum Serre relations} \label{sec:Representations}

\subsection{Module categories}\label{subsec:ModCat}

Throughout the rest of the paper, we will primarily work inside the
abelian category $\Modm(\nu):=\Mod\Hm(\nu)$ of finitely generated
$(\Z\times\Z_2)$-\emph{graded} left $\Hm(\nu)$-modules, with
morphisms being $\Hm(\nu)$-homomorphisms that preserve \emph{both} the
$\Z$-grading and $\Z_2$-grading. Let $\Hom_\nu(M,N)$ denote the
$\bbk$-vector space of morphisms between $M$ and $N$.

Note that $\Hm(\nu)$ is \emph{almost} positively $\Z$-graded,  cf.
\cite[p.24]{KL1}. Indeed, it is nontrivial only in degrees greater
than, or equal to, $-\sum_i n_i(n_i-1)$, for $\nu=\sum_{i\in
I}n_i\af_i$. In particular, for any $M\in\Modm(\nu)$, we may define
its {\em $(q,\pi)$-dimension}
\begin{align}\label{E:qpDim}
\dim_q^\pi M=\sum_{a\in\Z}(\dim M_\zero[a] +\pi\dim
M_\one[a])q^a\in\Z((q))^{\pi}.
\end{align}
The specialization $\pi\mapsto 1$ recovers the usual graded
dimension of the module $M$, while $\pi\mapsto-1$ produces the
graded \emph{super}dimension. Additionally, we define the
\emph{graded character} of $M\in\Modm(\nu)$ by
\begin{align}\label{E:FormalChar}
\ch_q^\pi M=\sum_{\ui\in I^\nu}(\dim_q^\pi e(\ui)M)\cdot \ui.
\end{align}
This is an element of the free $\Z((q))^\pi$-module with basis
labelled by $I^\nu$. Define
\begin{align}\label{E:CharSpecialization}
\ch_q^+M=\ch_q^\pi M|_{\pi=1}\andeqn \ch_q^-M=\ch_q^\pi M|_{\pi=-1}.
\end{align}
Note that we may have $\ch_q^-M=0$.

There exists a parity shift functor
$$\Pi:\Modm(\nu)\longrightarrow \Modm(\nu)
$$
which is defined on an object $M$ by letting $\Pi M=M$ as
$\Hm(\nu)$-modules, but with $(\Pi M)_\zero=M_\one$ and $(\Pi
M)_\one=M_\zero$. We set
\begin{align}
\label{E:SuperHom}\Homm_\nu(M,N)=\Hom_\nu(M,N)\oplus\Hom_\nu(M,\Pi N).
\end{align}
Then, $\Homm_\nu(M,N)$ is $\Z_2$-graded, with
$$
\Homm_\nu(M,N)_\zero=\Hom_\nu(M,N), \quad
\Homm_\nu(M,N)_\one=\Hom_\nu(M,\Pi N).
$$

For each $a\in\Z$, there is a functor
$\texttt{q}:\Modm(\nu)\to\Modm(\nu)$ which shifts the $\Z$-grading:
$\texttt{q}M = M\{1\}$ where, for $a\in\Z$, the module $M\{a\}$
equals $M$ as a $\Hm(\nu)$-module, but has $k$th graded component
$(M\{a\})[k]=M[k-a]$ (the $(k-a)$th graded component of $M$). Define
the space of $\Pi$-twisted \emph{enhanced} homomorphisms
\begin{align}\label{E:HOM}
\HOMm_\nu(M,N)=\bigoplus_{a\in\Z}\Homm_\nu(M,\Pi^a N\{a\}).
\end{align}

The category $\Modm(\nu)$ contains the full subcategories
$\Repm(\nu):=\Rep\Hm(\nu)$ and $\Projm(\nu):=\Proj\Hm(\nu)$ of
finite dimensional and finitely generated projective modules,
respectively. The category $\Repm(\nu)$ is abelian, while
$\Projm(\nu)$ is additive. The corresponding Grothendieck groups
$[\Repm(\nu)]$ and $[\Projm(\nu)]$ are naturally $\A^{\pi}$-modules.
The functors $\Pi$ and $\texttt{q}$ induce an action of $\A^{\pi}$
on both $[\Projm(\nu)]$ and $[\Repm(\nu)]$ via
$q[M]=[\texttt{q}M]=[M\{1\}]$ and $\pi[M]=[\Pi M]$, where $M$ is an
object in the relevant category (recall that $\Hom_\nu(M,N)$ is the
space of morphisms, and NOT $\Homm_\nu(M,N)$).

Recall the anti-automorphism $\psi$ from
Proposition~\ref{P:Anti-AutomorphismPsi}. For $P\in\Projm(\nu)$,
define
$$
P^\#=\HOMm_\nu(P,\Hm(\nu))^\psi,
$$
where for a finitely generated graded \emph{right} (resp.
\emph{left}) $\Hm(\nu)$-module $M$, $M^\psi$ is the \emph{left}
(resp. \emph{right}) module obtained by twisting the action on $M$
by the anti-automorphism $\psi$: $y.m=m.\psi(y)$ (resp.
$m.y=\psi(y).m$) for $m\in M$ and $y\in\Hm(\nu)$. Let $\mathbf{1}$
denote the unique simple $\Hm(0)$ module. It follows by
\eqref{E:HOM} that
\begin{align}
(\texttt{q}\mathbf{1})^{\#}\cong\Pi\texttt{q}^{-1}\mathbf{1}.
 \label{eq:barq}
\end{align}

For each $\ui\in I^\nu$, define the projective module
\begin{align}\label{E:Pui}
P_\ui=\Hm(\nu)e(\ui).
\end{align}
Then, the mapping which sends $e(\ui)$ to the natural embedding
$\{P_\ui\hookrightarrow\Hm(\nu)\}$ defines an isomorphism
\begin{align}\label{E:Pound}
P_\ui\cong P_\ui^{\#}.
\end{align}
Moreover,  $(P_\ui\{a\})^{\#}\cong \Pi^a
P_{\ui}\{-a\}$. Let
\begin{align}\label{E:ProjBar}
\bar{\quad}:[\Projm(\nu)]\longrightarrow [\Projm(\nu)]
\end{align}
be the $\Z$-linear involution: $\ov{\pi}=\pi$, $\ov{q}=\pi q^{-1}$,
and $\ov{[P]}=[P^{\#}]$.

There is a bilinear form
\begin{align}\label{E:ProjmBilinForm}
(\cdot,\cdot):[\Projm(\nu)]\times[\Projm(\nu)]\longrightarrow
\Z((q))^{\pi}
\end{align}
given by
\begin{align}\label{E:ProjFormDfn}
([P],[Q])=\dim_q^\pi (P^\psi\otimes_{\Hm(\nu)}Q).
\end{align}
Note that $(P_\ui,P_\uj)=\dim_q^\pi e(\uj)\Hm(\nu)e(\ui)$. For
future reference, we also note that natural form on
$[\Proj\,\Hm(\mu)\otimes\Hm(\nu)]$ is given by
\begin{align}\label{E:cattensorform}
 ([P\otimes Q],[P'\otimes Q'])=\dim_q^\pi((P\otimes
Q)^\psi\otimes_{\Hm(\mu)\otimes\Hm(\nu)}(P'\otimes Q')).
\end{align}

\begin{lem}\label{L:EquivProjForm}
For $P,Q\in\Projm(\nu)$, we have
$
([P],[Q])=\dim_q^\pi\HOMm_\nu(P^{\#},Q).
$
\end{lem}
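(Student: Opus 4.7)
The plan is to establish an isomorphism of $(\Z \times \Z_2)$-graded vector spaces
$$\HOMm_\nu(P^{\#}, Q) \;\cong\; P^\psi \otimes_{\Hm(\nu)} Q,$$
from which the lemma follows by taking $\dim_q^\pi$ of both sides.

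First I would handle the model case $P = P_\ui = \Hm(\nu) e(\ui)$. Since $\psi(e(\ui)) = e(\ui)$ by Proposition~\ref{P:Anti-AutomorphismPsi}, the anti-automorphism $\psi$ induces a graded isomorphism $P_\ui^\psi \cong e(\ui)\Hm(\nu)$ of right modules, and hence $P_\ui^\psi \otimes_{\Hm(\nu)} Q \cong e(\ui)\Hm(\nu) \otimes_{\Hm(\nu)} Q \cong e(\ui)Q$. On the other side, \eqref{E:Pound} gives $P_\ui^{\#} \cong P_\ui$, and the evaluation map $\phi \mapsto \phi(e(\ui))$ identifies $\HOMm_\nu(P_\ui, Q)$ with $e(\ui)Q$. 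Thus both quantities equal $\dim_q^\pi e(\ui)Q$, proving the lemma for $P = P_\ui$.

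For a general $P \in \Projm(\nu)$, I would extend by additivity. Both $P \mapsto P^\psi \otimes_{\Hm(\nu)} Q$ and $P \mapsto \HOMm_\nu(P^{\#}, Q)$ are additive functors on $\Projm(\nu)$, and both are equivariant under the shift endofunctors $\texttt q$ and $\Pi$. Since every finitely generated graded projective is a direct summand of a finite direct sum of shifted and parity-shifted projectives $\Pi^b \texttt q^a P_\ui$, the reduction to the model case suffices. Alternatively, one can string together natural isomorphisms
$$\HOMm_\nu(P^{\#}, Q) \cong \HOMm_\nu(P^{\#}, \Hm(\nu)) \otimes_{\Hm(\nu)} Q \cong (P^{\#\#})^\psi \otimes_{\Hm(\nu)} Q \cong P^\psi \otimes_{\Hm(\nu)} Q,$$
where the first is the standard Hom-tensor isomorphism for finitely generated projective $P^{\#}$, the second unwinds the definition $(P^{\#})^\psi = \HOMm_\nu(P, \Hm(\nu))$, and the third uses the reflexivity $P^{\#\#} \cong P$ (itself reducible to the indecomposable generators $P_\ui$).

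The main technical obstacle is careful bookkeeping of the $\Z$- and $\Z_2$-gradings, especially verifying that the evaluation isomorphism $\HOMm_\nu(\Hm(\nu)e(\ui),Q) \cong e(\ui)Q$ is compatible with the definitions \eqref{E:SuperHom}--\eqref{E:HOM} of the enhanced Hom space, and that the duality $\#$ intertwines $\texttt q$ and $\Pi$ in accordance with the bar involution $\bar q = \pi q^{-1}$, $\bar\pi = \pi$ as recorded in \eqref{eq:barq} and \eqref{E:ProjBar}. Once this bookkeeping is in place, the lemma reduces to the elementary identifications above.
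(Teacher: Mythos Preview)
Your proposal is correct, and your ``alternative'' chain of natural isomorphisms is essentially the paper's own proof. The only organizational difference is that the paper substitutes $P \mapsto P^{\#}$ at the outset and computes
\[
([P^{\#}],[Q]) = \dim_q^\pi (P^{\#})^\psi \otimes_{\Hm(\nu)} Q
= \dim_q^\pi \HOMm_\nu(P,\Hm(\nu)) \otimes_{\Hm(\nu)} Q
= \dim_q^\pi \HOMm_\nu(P,Q),
\]
thereby sidestepping the reflexivity $P^{\#\#} \cong P$ that you invoke. Your first approach (reduce to $P = P_\ui$, check both sides equal $e(\ui)Q$, then extend by additivity and shift-equivariance) is a slightly more hands-on variant that trades the general Hom--tensor isomorphism for an explicit verification on generators; it buys you concreteness at the cost of a bit more bookkeeping, but both routes are sound.
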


\begin{proof}
For $P,Q\in\Projm(\nu)$, we compute
\begin{align*}
([P^{\#}],[Q])&=\dim_q^\pi(P^{\#})^\psi\otimes_{\Hm(\nu)}Q\\
    &=\dim_q^\pi\HOMm_\nu(P,\Hm(\nu))\otimes_{\Hm(\nu)}Q\\
    &=\dim_q^\pi\HOMm_\nu(P,Q).
\end{align*}
The lemma is proved.
\end{proof}

The simple objects in $\Modm(\nu)$ belong to the category
$\Repm(\nu)$. Let $\Symz(\nu)$ be the unique maximal graded ideal in
$\Sym(\nu)$, spanned by $S_n$-invariant polynomials without constant
term. We have the following.

\begin{prp}\label{P:FinManySimples}
$\Symz(\nu)$ acts by 0 on any simple module. Hence, there are only
finitely many simple $\Hm(\nu)$-modules up to isomorphism and
grading/parity shift.
\end{prp}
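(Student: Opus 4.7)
The plan is to exploit the fact that $\Symz(\nu)$ consists entirely of elements of strictly positive $\Z$-degree, together with Proposition~\ref{P:Center}, which tells us that $\Hm(\nu)$ is finitely generated (even free of rank $2^{n_\one}(n!)^2$) over its center $\Sym(\nu)$. The argument is essentially the standard one for graded connected algebras, adapted to the $\Z_2$-graded setting.

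First I would verify that every simple object $L$ of $\Modm(\nu)$ actually lies in $\Repm(\nu)$, i.e.\ is finite dimensional. Because $\Hm(\nu)$ is finitely generated as a module over its (commutative) center $\Sym(\nu)$, any simple module $L$ is a finitely generated graded module. Since the grading on $\Hm(\nu)$ is bounded below (by $-\sum_i n_i(n_i-1)$, as recalled in \S\ref{subsec:ModCat}), the grading on $L$ is bounded below as well. Now consider the submodule $\Symz(\nu)\cdot L$. Since $\Symz(\nu)$ is a two-sided ideal of the center, this is a graded $\Hm(\nu)$-submodule of $L$. By definition $\Symz(\nu)$ sits in strictly positive $\Z$-degree, so if $d$ is the minimal degree in which $L$ is nonzero, then $\Symz(\nu)\cdot L$ is concentrated in degrees $>d$; in particular $\Symz(\nu)\cdot L\neq L$. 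By simplicity of $L$, we conclude $\Symz(\nu)\cdot L=0$.

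Consequently, $L$ factors through the quotient
\[
\bar H(\nu):=\Hm(\nu)/\Hm(\nu)\Symz(\nu).
\]
By Proposition~\ref{P:Center}, $\Hm(\nu)$ is free of rank $2^{n_\one}(n!)^2$ over $\Sym(\nu)$, so $\bar H(\nu)$ is a finite-dimensional $(\Z\times\Z_2)$-graded $\bbk$-algebra of the same rank. In particular $L$ is finite dimensional, so $L\in\Repm(\nu)$. It also follows that every simple module of $\Hm(\nu)$ is a simple module of the finite-dimensional algebra $\bar H(\nu)$.

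For the finiteness claim, I would invoke the standard fact that a finite-dimensional $(\Z\times\Z_2)$-graded algebra admits only finitely many isomorphism classes of graded simple modules up to grading and parity shift (essentially because, after choosing a grading normalization, the simples correspond to the simples of the underlying ungraded algebra, which are finite in number). Applying this to $\bar H(\nu)$ yields finitely many simple $\Hm(\nu)$-modules up to isomorphism and shift. No step here looks genuinely hard; the only mild subtlety to be careful about is invoking simplicity in the graded sense and then matching graded simples to ungraded simples modulo the $(q,\pi)$-shift, but this is routine given the almost-positive grading on $\Hm(\nu)$.
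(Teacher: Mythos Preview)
Your proof is correct and follows essentially the same approach as the paper: use the strict positivity of $\Symz(\nu)$ together with its centrality (Proposition~\ref{P:Center}) to show it annihilates any simple, then pass to the finite-dimensional quotient $\Hm(\nu)/\Symz(\nu)\Hm(\nu)$. The only cosmetic difference is that the paper phrases the first step as finding a simultaneous $0$-eigenvector for the commuting family $Y_\nu$ and then invoking centrality, whereas you argue directly that $\Symz(\nu)L$ is a proper graded submodule; both are the same degree argument in slightly different clothing.
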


\begin{proof}
Recall the commuting family of elements $Y_\nu$ from \eqref{E:Ynu}.
These elements are positively graded, and therefore act nilpotently
on any finite dimensional module. Given a simple module,  we can
find a simultaneous eigenvector for the action of $Y_\nu$
(necessarily with eigenvalue 0). By definition, $\Symz(\nu)$ acts as
0 on this vector. But, by Proposition \ref{P:Center}, $\Symz(\nu)$
is contained in the center of $\Hm(\nu)$ and so this determines its
action on the entire simple module. It now follows that every simple
module factors through the $2^{n_\one}(n!)^2$-dimensional algebra
$\Hm(\nu)/\Symz(\nu)\Hm(\nu)$, so there can only be finitely many.
\end{proof}

For each $M\in\Repm(\nu)$, we associate
$M^{\circledast}=\HOMm_\bbk(M,\bbk)$ ($\Pi$-twisted linear maps),
where $\bbk=\bbk_\zero$. There is an $\Hm(\nu)$-action on
$M^\circledast$ given by $(x f)(m)=f(\psi(x)m)$, for $x\in\Hm(\nu)$,
$f\in M^\circledast$, and $m\in M$, making $M^\circledast$ an object
in $\Repm(\nu)$. Using the definitions, we deduce that for any
simple module $L$, $L^\circledast\cong\Pi^aL\{a\}$ for some
$a\in\Z$. Replacing $L$ with $\Pi^aL\{a\}$, we obtain $L^\circledast\cong
L$.

This duality defines an involution
\begin{align}\label{E:RepBar}
\bar{\quad}:[\Repm(\nu)]\longrightarrow [\Repm(\nu)]
\end{align}
given by $\ov{\pi}=\pi$, $\ov{q}=\pi q^{-1}$, and $\ov{[M]}=[M^{\circledast}]$.

Define the $\A^\pi$-sesquilinear (i.e., antilinear in first
variable) Cartan pairing
\begin{align}\label{E:Cartanpairing}
\lan\cdot,\cdot\ran:[\Projm(\nu)]\times[\Repm(\nu)]\longrightarrow
\A^\pi
\end{align}
given by
\begin{align}\label{E:Cartanpairingdfn}
\lan[P],[M]\ran&=\dim_q^\pi \HOMm_\nu(P^\#,M).
\end{align}

For each simple module $L\in\Repm(\nu)$, there exists a (unique up
to $(\Z\times\Z_2)$-homogeneous isomorphism) projective
indecomposable cover $P_L\in\Projm(\nu)$. The modules $L$ and $P_L$
are dual with respect to \eqref{E:Cartanpairing}. Moreover, we have
$$
P_L^\#\cong P_{L^\circledast}.
$$

\subsection{More rank 1 calculations}

In this subsection, we will consider the algebra $\Hm(n\af_i)$,
which is isomorphic to either the nilHecke algebra or spin nilHecke
algebra, depending on whether $i$ is even or odd, respectively.
Accordingly, we will revert to the notation of $\S$\ref{subsec:rk1}
to better facilitate comparison with \cite[Section 3]{La} and
\cite[Section 2]{EKL}. Indeed, this section is essentially a review
of results from \cite{La,EKL} which we need, adapted for
computations with \emph{left} modules (cf. \eqref{E:Projective}
below) and enhanced by the insertion of $\pi$ to keep track of the
parity. This amounts to reading their diagrams from top to bottom,
as opposed to bottom to top (alternatively, applying the
antiautomorphism $\psi$). Additional signs also appear, caused by
\eqref{E:Relations8} which differs from the corresponding formula in
\emph{loc. cit.}, but is commonly used in the literature, cf.
\cite{BK2}.

The even and odd cases will be treated simultaneously by writing
$\del^\pm_r$ as necessary for the relevant divided difference
operator. Of course, both $\NH_n(=\NH^+_n$) and $\NHm_n$ can be
viewed as $(\Z,\Z_2)$-graded algebras as in the
$\S$\ref{subsec:gen&rel}. Our first observation is that
$(q,\pi)$-dimension of these algebras is
\begin{align}\label{E:dimqpNH}
\dim_q^\pi\NH^{\pm}_n=\frac{(\pi_i q_i)^{-{n\choose 2}}[n]_i!}{(1-\pi_i q_i^2)^n}.
\end{align}
This can be established exactly as in \cite[$\S3.1$]{La}, keeping
track of the parity when the superscript is $-$ (see also
\cite[Proposition~ 2.11]{EKL}).

Following \cite[(2.12)]{EKL}, define the algebra of (spin) symmetric
polynomials
\begin{align}\label{E:Lambda}
\Ld_n^\pm=\bigcap_{r=1}^{n-1}\ker(\del_r^\pm)\subset\bbk[Y]^\pm.
\end{align}
This algebra has a $(\Z\times\Z_2)$-homogeneous basis given by the
elementary (odd) symmetric functions
$\ep_\ld^\pm=\ep^\pm_{\ld_1}\cdots\ep^\pm_{\ld_h}$, where
$\ld=(\ld_1\geq \cdots\geq\ld_h)$ is a partition of $n$ and
\begin{align}\label{E:elemsym}
\ep_k^\pm(y_1,\ldots,y_n)=\sum_{1\leq r_1<\ldots<r_k\leq
n}(\pm1)^{r_1+\cdots+r_k-k}y_{r_1}\cdots y_{r_k},
\end{align}
see \cite[(2.21), Lemma 2.3, Remark 2.4]{EKL}. Then, $\ep_k^\pm$ has
bi-degree $(2k,p(i)k)\in\Z\times\Z_2$, and a straightforward
computation analogous to \cite[(2.18)]{EKL} proves that
\begin{align}\label{E:Ldqpidim}
\dim_q^\pi\Lambda^\pm_n=\frac{(\pi_i q_i)^{-{n\choose
2}}}{[n]_i!(1-\pi_i q_i^2)^n}
\end{align}

Below is a $\pi$-enhanced version of
\label{P:nilHecke}\cite[Proposition 3.5]{La} and \cite[Corollary
2.14]{EKL}.
\begin{prp}
The natural action of $\NH^\pm_n$ on $\bbk[Y]^\pm$ defines an
isomorphism
$$
\NH^\pm_n\cong\End_{\Ld_n^\pm}(\bbk[Y]^\pm)\cong\mathrm{Mat}\left((\pi_i
q_i)^{n\choose2}[n]_i!,\,\Ld_n^\pm\right).
$$
\end{prp}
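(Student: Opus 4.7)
The plan is to $\pi$-enhance the arguments of \cite[Proposition 3.5]{La} and \cite[Corollary 2.14]{EKL}, combining the faithful polynomial representation from \S\ref{subsec:rk1} with a $(q,\pi)$-dimension comparison based on \eqref{E:dimqpNH} and \eqref{E:Ldqpidim}. The two displayed isomorphisms will be handled separately: the matrix-algebra identification is formal once $\bbk[Y]^\pm$ is shown to be a graded free $\Ld_n^\pm$-module of the stated rank, and the identification with $\NH_n^\pm$ then comes from dimension counting combined with faithfulness of the polynomial representation.

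First I would verify that $\bbk[Y]^\pm$ is free of graded rank $(\pi_i q_i)^{n\choose 2}[n]_i!$ over $\Ld_n^\pm$, exhibiting the basis of (odd) Schubert polynomials $\del^\pm_{w^{-1} w_0}(y_1^{n-1} y_2^{n-2} \cdots y_{n-1})$ indexed by $w \in S_n$; the rank is forced to be the quotient $\dim_q^\pi \bbk[Y]^\pm / \dim_q^\pi \Ld_n^\pm = (1-\pi_i q_i^2)^{-n} \big/ \dim_q^\pi \Ld_n^\pm$ by \eqref{E:Ldqpidim}. This immediately yields the second displayed isomorphism from the definition of $\mathrm{Mat}(d, R)$ as the endomorphism algebra of a graded free module of graded rank $d$. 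Next, since $\Ld_n^\pm = \bigcap_r \ker \del_r^\pm$ by \eqref{E:Lambda}, the natural $\NH_n^\pm$-action on $\bbk[Y]^\pm$ is $\Ld_n^\pm$-linear and produces a graded algebra map $\rho : \NH_n^\pm \to \End_{\Ld_n^\pm}(\bbk[Y]^\pm)$, which is injective because the polynomial representation from \S\ref{subsec:rk1} is faithful. Finally, with $d = (\pi_i q_i)^{n\choose 2}[n]_i!$, a direct calculation using $\pi_i^2 = 1$ verifies the identity $d \cdot d|_{q\mapsto q^{-1}} \cdot \dim_q^\pi \Ld_n^\pm = \dim_q^\pi \NH_n^\pm$, matching \eqref{E:dimqpNH}; since $\rho$ is an injection of graded modules with equal graded dimensions, it must be an isomorphism.

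The main obstacle will be the first step: establishing the Schubert-polynomial basis of $\bbk[Y]^\pm$ over $\Ld_n^\pm$ in the odd case, where one must carefully track the signs arising from the skew-commutativity $y_r y_s = -y_s y_r$ and the twisted Leibniz rule \eqref{E:DivDiff-2}, ensuring simultaneously $\Ld_n^\pm$-linear independence and a spanning property. This is precisely the content of \cite[Section 2]{EKL}, so rather than redo the computation I would invoke their result and observe that inserting $\pi$ to record the parity of the grading goes through verbatim. Once the basis is secured, both the matrix-algebra identification and the final dimension count are formal, the only subtlety being the $\pi$-bookkeeping via $\pi_i^2 = 1$ and bar-invariance of the quantum integers $[k]_i$ (guaranteed by assumption (C6)).
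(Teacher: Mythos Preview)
Your proposal is correct and matches the paper's approach exactly: the paper gives no proof at all for this proposition, merely recording it as ``a $\pi$-enhanced version of \cite[Proposition 3.5]{La} and \cite[Corollary 2.14]{EKL}'', and your plan is precisely to $\pi$-enhance those arguments by invoking the Schubert basis of $\bbk[Y]^\pm$ over $\Ld_n^\pm$ from \cite{EKL}, then combining faithfulness of the polynomial representation with a graded-dimension comparison via \eqref{E:dimqpNH} and \eqref{E:Ldqpidim}.
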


Define the Demazure operator
\begin{align}\label{E:Demazure}
\ov{\del}_r^\pm=-\del_r^\pm y_r.
\end{align}
Then, using \eqref{E:Relations8},
$$
(\ov{\del}_r^\pm)^2=(-\dpm_ry_r)(-\dpm_ry_r)
=\dpm_r(\pm\dpm_ry_{r+1}-1)y_r=-\dpm_ry_r=\ov{\del}_r^\pm.
$$
Moreover, it is straightforward to verify that these elements
satisfy the type $A$ braid relations:
\begin{align*}
\ov{\del}^\pm_r\ov{\del}^\pm_s&=\ov{\del}^\pm_s\ov{\del}^\pm_r,&|r-s|>1,\\
\ov{\del}^{\pm}_r\ov{\del}^{\pm}_{r+1}\ov{\del}^{\pm}_r
&=\ov{\del}^{\pm}_{r+1}\ov{\del}^{\pm}_r\ov{\del}^{\pm}_{r+1}.&
\end{align*}
In particular, for each $w\in S_n$, the element $\ov{\del}_w^\pm$ is
well-defined in terms of any reduced expression of $w$.  Set
\begin{align}\label{E:idempotent}
\epm_n=\ov{\del}_{w_0}^\pm.
\end{align}

Since the spin nil-Coxeter algebra satisfies the braid relations for
the spin symmetric group as opposed to the standard braid relations,
the elements $\del^-_w$, $w\in S_n$ are only well-defined up to sign
and, therefore, we will fix reduced expressions $w=s_{r_1}\cdots
s_{r_k}$ for each element $w\in S_n$ and define
$\del^-_w=\del_{r_1}^-\cdots\del_{r_k}^-$ to remove this ambiguity.
For the moment, let $w_0\lan n\ran$ denote the longest word in
$S_n$. We define the fixed reduced expression for $w_0\lan n\ran$
inductively by $w_0\lan1\ran=1$, and $w_0\lan n\ran=s_1s_2\cdots
s_{n-1}w_0\lan n-1\ran $, for $n>1$. Then, we have a well-defined
element
\begin{align}\label{E:delw0}
\dpm_{w_0}=\dpm_{w_0\lan
n\ran}=\dpm_1\dpm_2\cdots\dpm_{n-1}\dpm_{w_0\lan n-1\ran}=\cdots.
\end{align}
For $k<n$, let ${}^\dagger S_k\leq S_n$ be the subgroup of
permutations of $\{n-k+1,\ldots,n\}$ and, for $w\in S_k$, let
${}^\dagger w\in{}^\dagger S_k$ denote the corresponding element.
The following useful fact is proved in \cite{EKL}.
\begin{lem}\label{L:daggeridem}
\cite[Lemma~ 3.2]{EKL} The following holds in $\NHm_n$:
$$
\del^-_{w_0}=\del^-_1\del^-_2\cdots\del^-_{n-1}\del^-_{{}^\dagger
w_0\lan n-1\ran}.
$$
\end{lem}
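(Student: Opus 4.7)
The plan is to prove the identity by induction on $n$, combining the definition of the fixed reduced expression in \eqref{E:delw0} with a careful rewriting of $\del^-_{w_0\lan n-1\ran}$ into $\del^-_{{}^\dagger w_0\lan n-1\ran}$ using the spin braid and anticommutation relations \eqref{E:spinnilCox}. The base cases $n=1,2$ are immediate from the definitions.

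For the inductive step, the first observation is a purely permutation-level fact: both
\[
s_1 s_2 \cdots s_{n-1} \cdot w_0\lan n-1\ran \quad \text{and} \quad {}^\dagger w_0\lan n-1\ran \cdot s_1 s_2 \cdots s_{n-1}
\]
are reduced expressions for $w_0\lan n\ran$ in $S_n$, each of length $\binom{n}{2}$. This follows from a direct check in one-line notation and encodes the fact that conjugation by $s_1 s_2 \cdots s_{n-1}$ shifts each simple reflection $s_r\in \langle s_1,\ldots,s_{n-2}\rangle$ to $s_{r+1}\in\langle s_2,\ldots,s_{n-1}\rangle$.

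Next, I would lift this to the spin nilCoxeter algebra by establishing a ``slide'' identity
\[
\del^-_1 \del^-_2 \cdots \del^-_{n-1} \cdot \del^-_{w_0\lan n-1\ran} \;=\; \pm\, \del^-_{{}^\dagger w_0\lan n-1\ran} \cdot \del^-_1 \del^-_2 \cdots \del^-_{n-1},
\]
by moving the generators of $w_0\lan n-1\ran$ past the prefix $\del^-_1\cdots\del^-_{n-1}$ one at a time. Each elementary shift $\del^-_r \leadsto \del^-_{r+1}$ uses one spin-braid move $\del^-_r \del^-_{r+1} \del^-_r = \del^-_{r+1} \del^-_r \del^-_{r+1}$ together with a controlled number of anticommutations $\del^-_a \del^-_b = -\del^-_b \del^-_a$ ($|a-b|>1$). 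Collecting the resulting signs over a reduced expression for $w_0\lan n-1\ran$ gives the overall sign, and the inductive hypothesis applied inside the copy of $\NHm_{n-1}$ generated by $\del^-_2,\ldots,\del^-_{n-1}$ identifies it with the sign appearing in the claim, which must be $+1$ for the stated identity to hold.

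The main obstacle is the sign bookkeeping in the spin setting. In the even nilCoxeter algebra, any two reduced expressions give exactly the same divided-difference product, but here every non-adjacent swap contributes a $-1$, so the argument hinges on showing that the total number of anticommutations performed during the slide has even parity. I expect this to reduce, via induction, to a parity statement about the (reduced, palindrome-free) word $s_1 s_2 \cdots s_{n-1}$, where the exchanges needed to push successive generators $s_{i_1},\ldots,s_{i_k}$ of $w_0\lan n-1\ran$ across it pair up and telescope cleanly. Once this sign is confirmed, combining the slide identity with the base formula from \eqref{E:delw0} yields the lemma.
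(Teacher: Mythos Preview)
The paper does not prove this lemma; it simply cites \cite[Lemma~3.2]{EKL}. So there is no proof in the paper to compare against, and the question is whether your argument establishes the displayed identity.

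There is a more basic problem: the identity as printed cannot hold. For $n\ge 3$ the word $s_1s_2\cdots s_{n-1}\cdot {}^\dagger w_0\lan n-1\ran$ is \emph{not} reduced --- already for $n=3$ it reads $s_1s_2s_2$ --- so the right-hand side $\del^-_1\del^-_2\cdots\del^-_{n-1}\,\del^-_{{}^\dagger w_0\lan n-1\ran}$ vanishes by $(\del^-_r)^2=0$, while $\del^-_{w_0}\neq 0$. The statement is evidently a mis-transcription of the result in \cite{EKL}; with the two factors on the right interchanged (or the prefix reversed to $\del^-_{n-1}\cdots\del^-_1$) one obtains a genuine reduced expression for $w_0\lan n\ran$, and the identity becomes meaningful.

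Your ``slide'' identity
\[
\del^-_1\del^-_2\cdots\del^-_{n-1}\cdot\del^-_{w_0\lan n-1\ran}\;=\;\pm\,\del^-_{{}^\dagger w_0\lan n-1\ran}\cdot\del^-_1\del^-_2\cdots\del^-_{n-1}
\]
is precisely this corrected version (your permutation-level check that ${}^\dagger w_0\lan n-1\ran\cdot s_1\cdots s_{n-1}$ is reduced for $w_0\lan n\ran$ is correct), so you are aiming at the right target. But you never reconcile your slide identity with the lemma's printed right-hand side, and indeed you cannot: the two expressions are not equal. Your closing sentence that ``combining the slide identity with \eqref{E:delw0} yields the lemma'' is therefore a gap --- what it yields is the corrected statement, not the one displayed.

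Beyond this, the entire content of the argument is the sign computation, and you do not carry it out. Saying that the anticommutations ``pair up and telescope cleanly'' is an expectation, not a proof; the parity count must be made explicit. Since the paper's intent is simply to quote the result from \cite{EKL}, the cleanest fix is to correct the transcription and cite that computation directly.
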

Set
\begin{align}\label{E:ydt}
y^{\dt_n}=(-1)^{{n-1}\choose 2}y_{1}^{n-1}y_{n+2}^{n-2}\cdots y_{n-1}.
\end{align}
\begin{lem}\label{L:Idempotent}
\cite[Proposition~ 3.5]{EKL} \cite[p. 2688]{KL2} We have
\begin{align}
\epm_n=(\pm1)^{n\choose 3}\dpm_{w_0}y^{\dt_n}\in\NH_n^\pm,
\end{align}
\end{lem}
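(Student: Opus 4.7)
The plan is to proceed by induction on $n$, using Lemma~\ref{L:daggeridem} as the key recursive tool. The base case $n=1$ is immediate: both $\epm_1$ and $(\pm 1)^{0}\dpm_{w_0\lan 1\ran}y^{\delta_1}$ reduce to the empty product $1$.

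For the inductive step, the reduced factorization $w_0\lan n\ran = s_1 s_2 \cdots s_{n-1}\cdot {}^\dagger w_0\lan n-1\ran$ (which also underlies Lemma~\ref{L:daggeridem}) lifts to the Demazure operators, giving
\[
\epm_n \;=\; \ov{\del}_{w_0\lan n\ran}^{\pm} \;=\; \ov{\del}_1^{\pm}\ov{\del}_2^{\pm}\cdots\ov{\del}_{n-1}^{\pm}\cdot {}^\dagger\epm_{n-1},
\]
where ${}^\dagger\epm_{n-1}$ denotes the element $\epm_{n-1}$ for the subalgebra generated by $\dpm_2,\ldots,\dpm_{n-1}$ and $y_2,\ldots,y_n$. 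Expanding $\ov{\del}_r^{\pm} = -\dpm_r y_r$ produces
\[
\prod_{r=1}^{n-1}\ov{\del}_r^{\pm} \;=\; (-1)^{n-1}\,\dpm_1 y_1\,\dpm_2 y_2\cdots \dpm_{n-1} y_{n-1}.
\]

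The central computation is to rewrite this into the form $\dpm_1\dpm_2\cdots\dpm_{n-1}\cdot y_1^{n-1}$, modulo an explicit sign. By \eqref{E:Relations6}, each $y_r$ (skew-)commutes with $\dpm_s$ whenever $s\neq r-1,r$, so one can freely push $y_r$ past the later $\dpm_{r+1},\ldots,\dpm_{n-1}$ and past subsequent $y$'s, picking up only explicit $\pm 1$'s (trivial for $i\in I_\zero$, tracked for $i\in I_\one$). The genuine crossings are governed by \eqref{E:Relations8}--\eqref{E:Relations9}, and combining these with $(\dpm_r)^2=0$ yields collapsing identities such as $\dpm_r y_{r+1}\dpm_r = \dpm_r$ that trim the expression to the desired shape. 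Applying the inductive hypothesis ${}^\dagger\epm_{n-1}=(\pm 1)^{\binom{n-1}{3}}\dpm_{{}^\dagger w_0\lan n-1\ran}\cdot {}^\dagger y^{\delta_{n-1}}$, concatenating, and regrouping the resulting powers of $(-1)$ via the combinatorial identity $\binom{n}{3}=\binom{n-1}{3}+\binom{n-1}{2}$ delivers the claimed formula.

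The principal obstacle is the meticulous sign bookkeeping in the odd case, where each elementary swap carries a factor of $-1$ and the non-trivial relations \eqref{E:Relations8}--\eqref{E:Relations9} spawn lower-order correction terms that must be shown either to vanish (typically via $(\dpm_r)^2 = 0$) or to conspire into the exponent $\binom{n}{3}$. An attractive alternative, which sidesteps much of this, is to invoke the faithful polynomial representation of Proposition~\ref{P:PolynomialRep}: exhibit $\epm_n$ as the idempotent projection onto a rank-one $\Ld_n^{\pm}$-summand of $\bbk[Y]^{\pm}$, and verify by a Schubert-calculus-style computation that $\dpm_{w_0}(y^{\delta_n})=\pm 1$ while $\dpm_{w_0}$ annihilates every monomial of strictly lower degree; faithfulness then forces the algebraic identity in $\NH_n^{\pm}$.
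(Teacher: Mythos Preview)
The paper does not prove this lemma; it simply cites \cite[Proposition~3.5]{EKL} and \cite[p.~2688]{KL2}. So you are supplying an argument the paper outsources entirely.

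Your inductive strategy is reasonable in outline, but there is a concrete gap at the step you label ``the central computation.'' Pushing each $y_r$ rightward past $\dpm_{r+1},\ldots,\dpm_{n-1}$ via \eqref{E:Relations6} does not change the subscript on $y_r$: what you obtain (up to sign) is $\dpm_1\dpm_2\cdots\dpm_{n-1}\cdot y_1 y_2\cdots y_{n-1}$, not $\dpm_1\dpm_2\cdots\dpm_{n-1}\cdot y_1^{n-1}$ as you claim. Turning the monomial $y_1 y_2\cdots y_{n-1}$, together with the inductively produced ${}^\dagger y^{\delta_{n-1}}$, into the staircase $y^{\delta_n}$ requires commuting the $y$'s through $\dpm_{{}^\dagger w_0\langle n-1\rangle}$, and that is precisely the substance of the identity --- your sketch does not address it. The collapsing rule $\dpm_r y_{r+1}\dpm_r=\dpm_r$ you invoke does help, but only once the $y$'s are adjacent to the right $\dpm$'s, and arranging that is the whole difficulty. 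There is also a minor slip in your factorization: $s_1 s_2\cdots s_{n-1}\cdot {}^\dagger w_0\langle n-1\rangle$ is not a reduced expression for $w_0\langle n\rangle$ (for $n=3$ it reads $s_1 s_2 s_2=s_1$). The paper's recursive definition uses the non-dagger $w_0\langle n-1\rangle$ after $s_1\cdots s_{n-1}$; the dagger variant requires the reversed prefix.

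Your alternative via the faithful polynomial representation is sound and is essentially the route taken in \cite{EKL}: one checks directly that $\dpm_{w_0}$ applied to the staircase monomial gives $\pm 1$ with the stated sign, so that $(\pm 1)^{\binom{n}{3}}\dpm_{w_0}y^{\delta_n}$ acts on $\bbk[Y]^\pm$ as the same projection onto $\Lambda_n^\pm$ as $\epm_n$ does, and faithfulness then forces equality in $\NH_n^\pm$.
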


Note that we have adapted this idempotent for \emph{left} modules.

\begin{lem}\label{L:idempotent identities}
\cite[Lemma~ 5]{KL2}\cite[Proposition~ 3.6]{EKL}
The following identity holds in $\NH_n^\pm$:
 $\epm_n\dpm_{w_0}=\dpm_{w_0}.$
\end{lem}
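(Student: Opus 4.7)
\bigskip

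\noindent\textbf{Proof proposal.} The plan is to reduce the identity to a statement in the polynomial representation and then exploit the Leibnitz rule. By Proposition~\ref{P:nilHecke}, the action of $\NH_n^\pm$ on $\bbk[Y]^\pm$ is faithful, so it suffices to verify $\epm_n \dpm_{w_0}(f) = \dpm_{w_0}(f)$ for every $f \in \bbk[Y]^\pm$. Using Lemma~\ref{L:Idempotent} to rewrite $\epm_n = (\pm 1)^{\binom{n}{3}}\,\dpm_{w_0} y^{\dt_n}$, the claim becomes
\[
(\pm 1)^{\binom{n}{3}}\,\dpm_{w_0}\!\left(y^{\dt_n}\cdot \dpm_{w_0}(f)\right) \;=\; \dpm_{w_0}(f).
\]

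Next I would establish two auxiliary facts. First, the image of $\dpm_{w_0}$ lies in the (spin) symmetric polynomials $\Ld_n^\pm$. This follows because for any $r$, one can rearrange a chosen reduced expression of $w_0$ (using the braid/commutation relations \eqref{E:spinnilCox}) so that $\dpm_r\dpm_{w_0}$ produces a factor of $(\dpm_r)^2 = 0$; hence $\dpm_r \circ \dpm_{w_0} = 0$ (up to sign), so $g := \dpm_{w_0}(f) \in \Ld_n^\pm = \bigcap_r \ker \dpm_r^\pm$. Second, invoking the Leibnitz rule \eqref{E:DivDiff-2} and the fact that $\dpm_r(g) = 0$, we can pull $g$ through each $\dpm_r$ in a reduced expression of $\dpm_{w_0}$, producing only a global parity sign from the action of $s_r$ on $g$. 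The outcome is an identity of the form
\[
\dpm_{w_0}(y^{\dt_n}\, g) \;=\; \varepsilon \cdot \dpm_{w_0}(y^{\dt_n})\, g,
\]
for some sign $\varepsilon\in\{\pm 1\}$.

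It then remains to compute $\dpm_{w_0}(y^{\dt_n})$. Here I would use the recursive decomposition \eqref{E:delw0}, $\dpm_{w_0\lan n\ran} = \dpm_1 \dpm_2 \cdots \dpm_{n-1}\, \dpm_{{}^\dagger w_0\lan n-1\ran}$, together with the definitions \eqref{E:DivDiff-1}--\eqref{E:DivDiff-2} of the (odd) divided differences, and proceed by induction on $n$. Applying $\dpm_{n-1},\dpm_{n-2},\dots,\dpm_1$ successively to $y^{\dt_n} = (-1)^{\binom{n-1}{2}} y_1^{n-1}y_2^{n-2}\cdots y_{n-1}$ reduces the problem to the case $n-1$, and one finds $\dpm_{w_0}(y^{\dt_n}) = \pm 1$ with exactly the sign needed so that $\varepsilon\,(\pm 1)^{\binom{n}{3}}\,\dpm_{w_0}(y^{\dt_n}) = 1$.

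The main obstacle is the sign bookkeeping in the odd case: the spin nil-Coxeter braid relations \eqref{E:spinnilCox} and the skew-commutation of the $y_r$'s introduce signs both in the Leibnitz step (when pulling $g \in \Ld_n^-$ past $y^{\dt_n}$ and across the $s_r$-actions) and in the recursive evaluation of $\dpm_{w_0}(y^{\dt_n})$. Matching these against the prefactor $(\pm 1)^{\binom{n}{3}}$ in Lemma~\ref{L:Idempotent} is the delicate part; everything else is essentially a direct transcription of the even nilHecke argument in \cite{La,KL2}.
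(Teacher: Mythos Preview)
The paper does not supply its own proof of this lemma; it simply cites \cite[Lemma~5]{KL2} for the even case and \cite[Proposition~3.6]{EKL} for the odd case. Your approach via the faithful polynomial representation, combined with Lemma~\ref{L:Idempotent} and the Leibnitz rule, is the route taken in those references, so there is nothing in the paper to compare against beyond noting that your outline is sound.

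Two minor remarks. First, no extraneous sign $\varepsilon$ arises in your Leibnitz step: with $g=\dpm_{w_0}(f)\in\Ld_n^\pm$ kept on the right, \eqref{E:DivDiff-2} gives $\dpm_r(hg)=\dpm_r(h)\,g$ exactly (the second term $s_r(h)\dpm_r(g)$ vanishes since $\dpm_r(g)=0$), and iterating along any reduced word yields $\dpm_{w_0}(y^{\dt_n}g)=\dpm_{w_0}(y^{\dt_n})\cdot g$ on the nose. The only sign to chase is therefore in the scalar $\dpm_{w_0}(y^{\dt_n})$. Second, the decomposition you quote with ${}^\dagger w_0\lan n-1\ran$ is Lemma~\ref{L:daggeridem}, not \eqref{E:delw0}.

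If you would rather sidestep the inductive sign computation entirely, there is a one-line argument available once Lemma~\ref{L:Idempotent} is in hand: $\dpm_{w_0}$ spans the one-dimensional minimal $\Z$-degree component of $\NH_n^\pm$ (by Theorem~\ref{T:PBW}), so $\epm_n\dpm_{w_0}=c\,\dpm_{w_0}$ for some scalar $c$; idempotency of $\epm_n$ forces $c^2=c$, and $c=0$ would give $\epm_n^2=(\pm1)^{\binom{n}{3}}(\epm_n\dpm_{w_0})\,y^{\dt_n}=0$, contradicting $\epm_n\neq 0$.
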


Identify $\epm_{n-1}$ with the image of $\epm_{n-1}\otimes 1$ under
the natural inclusion
$\NH^\pm_{n-1}\otimes\NH^\pm_1\hookrightarrow\NH^\pm_n$   and let
$^\dagger\epm_{n-1}$ denote the image of  $1\otimes\epm_{n-1}$ under
the inclusion
$\NH^\pm_1\otimes\NH^\pm_{n-1}\hookrightarrow\NH^\pm_n$. The
following identities are \cite[(10)-(11)]{KL2}.
\begin{lem}\label{L:IdemIdentities} The following hold in $\NH^\pm_n$:
\begin{align}
\label{E:idempotentmult}&\epm_{n-1}\epm_n=\epm_n,\;\;\;
{}^\dagger\epm_{n-1}\epm_n=\epm_n,\\
\label{E:idempotentcrossing}
 &\epm_n\dpm_1\cdots\dpm_{n-1}\epm_{n-1}
 =\dpm_1\cdots\dpm_{n-1}\epm_{n-1}     ,\\
\label{E:idempotentcrossing2}
 &\epm_n\dpm_1\cdots\dpm_{n-1}{}^\dagger\epm_{n-1}
 =\dpm_1\cdots\dpm_{n-1}{}^\dagger\epm_{n-1}.
\end{align}
\end{lem}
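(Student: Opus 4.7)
The plan is to prove each of the three identities using the fact that the Demazure operators $\ov{\del}^\pm_w$ are well-defined for $w\in S_n$ (since the $\ov{\del}^\pm_r$ satisfy the ordinary braid relations, as noted in the text), together with the factorizations of $\dpm_{w_0}$ from Lemmas \ref{L:Idempotent}, \ref{L:daggeridem} and the relation $\epm_n\dpm_{w_0}=\dpm_{w_0}$ from Lemma \ref{L:idempotent identities}. Throughout, $\epm_n=\ov{\del}^\pm_{w_0\lan n\ran}$ by definition.

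For $\epm_{n-1}\epm_n=\epm_n$: I would check that $w_0\lan n\ran=w_0\lan n-1\ran\cdot s_1s_2\cdots s_{n-1}$ is a reduced expression, which is straightforward once one verifies as a product of permutations that $i\mapsto n+1-i$ (and the length adds correctly as $\binom{n-1}{2}+(n-1)=\binom{n}{2}$). Because $\ov{\del}^\pm_w$ is well-defined, this gives
\[
\epm_n=\ov{\del}^\pm_{w_0\lan n-1\ran}\,\ov{\del}^\pm_1\cdots\ov{\del}^\pm_{n-1}=\epm_{n-1}\,\ov{\del}^\pm_1\cdots\ov{\del}^\pm_{n-1},
\]
whence $\epm_{n-1}\epm_n=(\epm_{n-1})^2\ov{\del}^\pm_1\cdots\ov{\del}^\pm_{n-1}=\epm_n$ by idempotency of $\epm_{n-1}$. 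The identity ${}^\dagger\epm_{n-1}\epm_n=\epm_n$ is handled in parallel by verifying that $w_0\lan n\ran={}^\dagger w_0\lan n-1\ran\cdot s_1s_2\cdots s_{n-1}$ is also a reduced expression, giving $\epm_n={}^\dagger\epm_{n-1}\ov{\del}^\pm_1\cdots\ov{\del}^\pm_{n-1}$ and then applying the same idempotency argument.

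For \eqref{E:idempotentcrossing}: Use Lemma \ref{L:Idempotent} inside $\NH^\pm_{n-1}\hookrightarrow\NH^\pm_n$ to write $\epm_{n-1}=(\pm1)^{\binom{n-1}{3}}\dpm_{w_0\lan n-1\ran}y^{\dt_{n-1}}$. Since $s_1 s_2\cdots s_{n-1}\cdot w_0\lan n-1\ran$ is a reduced expression for $w_0\lan n\ran$ (the same length check as above), the product $\dpm_1\cdots\dpm_{n-1}\dpm_{w_0\lan n-1\ran}$ agrees up to sign with $\dpm_{w_0}$, so that
\[
\dpm_1\cdots\dpm_{n-1}\epm_{n-1}=\pm\,\dpm_{w_0}\,y^{\dt_{n-1}}.
\]
Since $y^{\dt_{n-1}}\in\bbk[Y]^\pm$ stands to the right of $\dpm_{w_0}$, applying $\epm_n$ and invoking Lemma \ref{L:idempotent identities} gives
\[
\epm_n\dpm_1\cdots\dpm_{n-1}\epm_{n-1}=\pm\,(\epm_n\dpm_{w_0})y^{\dt_{n-1}}=\pm\,\dpm_{w_0}y^{\dt_{n-1}}=\dpm_1\cdots\dpm_{n-1}\epm_{n-1},
\]
where both occurrences of the sign match because they originate from the same factorization. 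Identity \eqref{E:idempotentcrossing2} is entirely analogous: apply Lemma \ref{L:Idempotent} to ${}^\dagger\epm_{n-1}$ and invoke Lemma \ref{L:daggeridem} — which is precisely the statement that $\dpm_1\dpm_2\cdots\dpm_{n-1}\dpm_{{}^\dagger w_0\lan n-1\ran}=\dpm_{w_0}$ in the fixed reduced expression — then apply Lemma \ref{L:idempotent identities} as before.

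The main obstacle, and essentially the only one, is keeping track of signs in the spin (odd) case, where the spin nil-Coxeter relations \eqref{E:spinnilCox} make $\dpm_w$ depend on the choice of reduced expression. This is circumvented by using the Demazure operators $\ov{\del}^\pm_w$ wherever possible (they are independent of reduced expression) and by observing that for \eqref{E:idempotentcrossing}--\eqref{E:idempotentcrossing2} the ambiguous sign appears on both sides of the identity, coming from the same factorization, and therefore cancels.
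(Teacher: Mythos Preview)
Your approach matches the paper's: it too reduces \eqref{E:idempotentmult} to well-definedness of the Demazure operators $\ov{\del}^\pm_w$ and derives \eqref{E:idempotentcrossing}--\eqref{E:idempotentcrossing2} from Lemmas \ref{L:Idempotent}, \ref{L:daggeridem}, and \ref{L:idempotent identities} in exactly the way you do. One minor slip: in the paper's convention (take inverses in \eqref{E:delw0}) the correct factorization is $w_0\lan n\ran=w_0\lan n-1\ran\cdot s_{n-1}\cdots s_1$, not $w_0\lan n-1\ran\cdot s_1\cdots s_{n-1}$ (your ${}^\dagger$ factorization, by contrast, is correct), but your argument only needs \emph{some} reduced factorization beginning with $w_0\lan n-1\ran$, so this is harmless; note also that by \eqref{E:delw0} the equality $\dpm_1\cdots\dpm_{n-1}\dpm_{w_0\lan n-1\ran}=\dpm_{w_0}$ holds on the nose with the fixed reduced expressions, so your sign-tracking there is unnecessary.
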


\begin{proof}
Using the definition \eqref{E:Demazure}, formulae
\eqref{E:idempotentmult} reduce to standard properties of the
Demazure operators. Formula \eqref{E:idempotentcrossing} is
immediate from Lemmas \ref{L:Idempotent} and \ref{L:idempotent
identities} since
\begin{align*}
\epm_n\dpm_1\cdots\dpm_{n-1}\epm_{n-1}
&=\epm_n\dpm_1\cdots\dpm_{n-1}\dpm_{w_0\lan n-1\ran}y^{\dt_{n-1}}\\
    &=\epm_n\dpm_{w_0\lan n\ran}y^{\dt_{n-1}}
    =\dpm_{w_0\lan n\ran}y^{\dt_{n-1}}
    =\dpm_1\cdots\dpm_{n-1}\epm_{n-1}.
\end{align*}
The proof of \eqref{E:idempotentcrossing2} is the same, except
inserting Lemma \ref{L:daggeridem} for $\dpm_{w_0}$.
\end{proof}

\begin{lem}\label{L:Annihilators}\cite[(12)-(13)]{KL2}
The following hold in $\NH^\pm_n$:
$$
\dpm_{n-1}\cdots\dpm_2\dpm_1y_1^a\epm_n
=\begin{cases}(-1)^{n-1}\epm_n&\mbox{if }a=n-1\\
 0&\mbox{if }a<n-1,
 \end{cases}
 $$
and
$$
\dpm_1\dpm_2\cdots\dpm_{n-1}y_n^a\epm_n=
\begin{cases}\epm_n&\mbox{if }a=n-1\\
  0&\mbox{if }a<n-1.
\end{cases}
$$
\end{lem}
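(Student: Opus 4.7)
The plan is to pass to the faithful polynomial representation of $\NH^\pm_n$ on $\bbk[Y]^\pm$ given by Proposition \ref{P:PolynomialRep} and reduce both identities to scalar polynomial computations. The crucial first observation is that $\dpm_r\epm_n=0$ for every $1\leq r\leq n-1$. By Lemma \ref{L:Idempotent}, $\epm_n$ is a scalar multiple of $\dpm_{w_0}y^{\dt_n}$, and in the (spin) nil-Coxeter algebra $\dpm_r\dpm_{w_0}=0$ since a reduced expression for $w_0$ can be chosen to start with $s_r$, contributing a factor $(\dpm_r)^2=0$. Consequently, as an operator on $\bbk[Y]^\pm$, the image of $\epm_n$ lies in the (spin) symmetric subalgebra $\Ld^\pm_n=\bigcap_r\ker\dpm_r$.

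Next, divided differences act $\Ld^\pm_n$-linearly on the right factor: the Leibniz rule $\dpm_r(hg)=\dpm_r(h)g+s_r(h)\dpm_r(g)$ collapses to $\dpm_r(hg)=\dpm_r(h)g$ whenever $g\in\Ld^\pm_n$. Iterating, for any $f,h\in\bbk[Y]^\pm$,
\begin{equation*}
\dpm_{k_1}\cdots\dpm_{k_m}\bigl(h\cdot\epm_n(f)\bigr)=\bigl(\dpm_{k_1}\cdots\dpm_{k_m}(h)\bigr)\cdot\epm_n(f).
\end{equation*}
By faithfulness of the polynomial action, each identity in the lemma reduces to the corresponding scalar polynomial identity
\begin{equation*}
\dpm_1\cdots\dpm_{n-1}(y_n^a)=\delta_{a,n-1},\qquad \dpm_{n-1}\cdots\dpm_1(y_1^a)=(-1)^{n-1}\delta_{a,n-1},
\end{equation*}
interpreted as elements of $\bbk$. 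For $a<n-1$ the vanishing is immediate from the $\Z$-grading, since each $\dpm_r$ lowers degree by $2$, so the outcome has strictly negative degree and must vanish.

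For $a=n-1$ the outcome is a degree-zero polynomial, hence a scalar, which I would compute by induction on $n$. I peel off the innermost (respectively, outermost) operator using the Leibniz rule, obtaining $\dpm_{n-1}(y_n^{n-1})=\sum_{j=0}^{n-2}\epsilon_j\, y_{n-1}^j\, y_n^{n-2-j}$ for appropriate signs $\epsilon_j\in\{\pm 1\}$ ($\epsilon_j=1$ in the even case, $\epsilon_j=(-1)^j$ in the odd case via $s_r(y_k)=-y_{s_r(k)}$). Applying $\dpm_1\cdots\dpm_{n-2}$ to each summand, only the $j=n-2$ term survives for degree reasons, and that surviving term matches the inductive hypothesis at rank $n-1$. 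The main obstacle is the sign bookkeeping in the odd case: the twisted $S_n$-action on $\bbk[Y]^-$ and the sign in Lemma \ref{L:Idempotent} (through the factor $(\pm 1)^{n\choose 3}$ and the exponent in $y^{\dt_n}$) must be reconciled against the stated constants, so careful verification at a small base case ($n=1$ or $n=2$) is needed to pin down the overall sign before the induction runs.
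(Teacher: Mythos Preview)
Your approach is correct and rests on exactly the same two ingredients as the paper's one-line proof: the fact that $\dpm_r\epm_n=0$, and the commutation of $\dpm_r$ past powers of the $y$'s. The only difference is packaging. The paper works directly in $\NH^\pm_n$, using relation \eqref{E:Relations8} (together with \eqref{E:Relations6} and \eqref{E:Relations9}) to push each $\dpm_r$ rightward until it meets $\epm_n$ and dies; you instead pass to the faithful polynomial module, observe that the image of $\epm_n$ lies in $\Ld^\pm_n$, and use right $\Ld^\pm_n$-linearity of the divided differences (which is just the Leibniz rule plus $\dpm_r\epm_n=0$ again) to strip off $\epm_n(f)$ and reduce to the scalar $\dpm_1\cdots\dpm_{n-1}(y_n^{a})$. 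Your detour through Proposition~\ref{P:PolynomialRep} is unnecessary but harmless, and it has the virtue of making the degree argument for $a<n-1$ completely transparent.

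Your caution about the odd-case signs is well placed. Carrying out your induction in the spin case, the surviving term at each step is the $j=n-2$ summand with coefficient $\epsilon_{n-2}=(-1)^{n-2}$, so the scalar $c_n:=\del^-_1\cdots\del^-_{n-1}(y_n^{\,n-1})$ satisfies $c_n=(-1)^{n-2}c_{n-1}$ with $c_2=1$, giving $c_n=(-1)^{\binom{n-1}{2}}$ rather than $1$; the same discrepancy appears in the first identity. This does not affect the vanishing for $a<n-1$, which is the part actually used in the proof of Theorem~\ref{T:CategoricalSerre}, but it confirms that the sign bookkeeping you flagged genuinely needs to be done.
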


\begin{proof}
The proof follows by induction using \eqref{E:Relations8} and the
fact that $\dpm_r\epm_n=0$.\end{proof}

\subsection{Categorical Serre relations}\label{subsec:CategoricalSerreRelations}

From now on, we write $\mathbf{e}_{i,n}=\mathbf{e}_n^\pm$ since the
$\pm$ notation can be recovered from the parity of $i$, and
translate $\NH^\pm_n$ to the notation of $\Hm(n\af_i)$. This defines
the unique projective indecomposable $\Hm(n\af_i)$-module
\begin{align}\label{E:Projective}
P_{i^{(n)}} :=\Pi^{p(i){n\choose2}}\Hm(n\af_i)
\mathbf{e}_{i,n}\left\{-{n\choose2}\right\},
\end{align}
and
$$
\Hm(n\af_i)\cong\bigoplus_{[n]_i!}P_{i^{(n)}}.
$$
Here for $f(q,\pi)=\sum_{k\in\Z}(a_{k,\zero}+\pi
a_{k,\one})q^k\in\Z_{\geq0}[q,q^{-1}]^\pi$ and for a
$(\Z\times\Z_2)$-graded vector space $M$, we have set
$$
\bigoplus_{f(q,\pi)}M
=M^{\oplus_f}=\bigoplus_{k\in\Z}\left(M^{\oplus_{a_{k,\zero}}}\oplus(\Pi
M)^{\oplus_{a_{k,\one}}}\right)\{k\}.
$$
For example, if $i\in I_\one$,  $ H(2\af_i)\cong
P_{i^{(2)}}^{\oplus[2]_i!}=(\Pi P_{i^{(2)}})\{1\}\oplus
P_{i^{(2)}}\{-1\}.
$

Now, let $\nu\in Q^+$ be arbitrary. For $\ui\in I^\nu$, we may
consider any grouping of the form
\begin{align}\label{E:uiGrouping}
\ui=(\underbrace{i_1,\ldots,i_1}_{k_1},
\ldots,\underbrace{i_t,\ldots,i_t}_{k_t}).
\end{align}
Identify $\Hm(k_1\af_{i_1})\otimes\cdots\otimes\Hm(k_t\af_{i_t})$
with its image in $\Hm(\nu)$ under the natural embedding, and define
\begin{align*}
\mathbf{e}_{\ui,\uk} := &\mathbf{e}_{i_1,k_1}\otimes\cdots\otimes
\mathbf{e}_{i_t,k_t},\\
 p(\ui,\uk):=& p(i_1){k_1\choose 2}+\ldots+p(i_t){k_t\choose2}, \\
{\uk\choose 2} :=& \sum_{a=1}^t{k_a\choose 2}.
\end{align*}
We further define the projective module
\begin{align}\label{E:Projective module}
P_{\ui^{(\uk)}}:=P_{i_1^{(k_1)}\cdots
i_t^{(k_t)}}=\Pi^{p(\ui,\uk)}\Hm(\nu)
\mathbf{e}_{\ui,\uk}\left\{-{\uk\choose2}\right\}.
\end{align}

Recall $p(k;i,j)$ from \eqref{E:pkij}.

\begin{thm}\label{T:CategoricalSerre}
For $i,j\in I$, $i\neq j$, let $N=1-a_{ij}$. There exists a split
exact sequence of $\Hm(N\alpha_i+\alpha_j)$-modules:
\begin{align*}
\xymatrix{0\ar[r]&\Pi^{p(0;i,j)}P_{i^{(N)}j}\ar[r]
&\Pi^{p(1;i,j)}P_{i^{(N-1)}ji}\ar[r]
&\cdots\ar[r]&\Pi^{p(N;i,j)}P_{ij^{(N)}}\ar[r]&0}.
\end{align*}
In particular, there is an isomorphism
\begin{align*}
\bigoplus_{k=0}^{\lfloor
\frac{N+1}{2}\rfloor}\Pi^{p(2k;i,j)}P_{i^{(N-2k)}ji^{(2k)}}
\cong\bigoplus_{k=0}^{\lfloor
\frac{N}{2}\rfloor}\Pi^{p(2k+1;i,j)}P_{i^{(N-2k-1)}ji^{(2k+1)}}.
\end{align*}
\end{thm}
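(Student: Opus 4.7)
The plan is to lift the quantum Serre relation (Theorem \ref{th:serre}) to the categorical level, following the strategy of \cite{KL1, KL2} and \cite{EKL} while carefully tracking the super signs. The proof proceeds in three steps: construct an explicit complex, verify $d^2=0$, and establish exactness via a graded character computation.

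For the first step, for $0 \le k \le N-1$ I would define a degree-preserving map
$$d_k : \Pi^{p(k;i,j)} P_{i^{(N-k)} j i^{(k)}} \longrightarrow \Pi^{p(k+1;i,j)} P_{i^{(N-k-1)} j i^{(k+1)}}$$
by right multiplication by an explicit element of $\Hm(N\af_i+\af_j)$ that effects one $j$-$i$ crossing (essentially $\tau_{N-k}$) sandwiched between the source and target idempotents $\mathbf{e}_{i,\cdot}$, in analogy with the Chevalley--Serre maps of \cite[\S2.3]{KL1}. The parity shifts $\Pi^{p(k;i,j)}$ are chosen so that at the Grothendieck group level the alternating sum of the $[d_k]$ matches the super Serre relation \eqref{eq:serretheta} term by term.

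For the second step, $d_{k+1}\circ d_k=0$ reduces to a local calculation on the stretch of strands immediately adjacent to the $j$-strand: two consecutive $j$-crossings separated by one $i$-strand, all sandwiched between $\mathbf{e}_{i,\cdot}$ idempotents. Applying Lemma \ref{L:IdemIdentities} to slide the idempotents past the crossings, and the quadratic and braid-like relations \eqref{E:Quadratic Relations}--\eqref{E:Braid Relations} to rearrange the interior, the composition collapses to an expression of the form $\dpm_r \mathbf{e}_{i,n}$, which vanishes by Lemma \ref{L:Annihilators}. The super signs picked up from pulling the $j$-strand past $i$-strands via \eqref{E:Relations5}--\eqref{E:Relations9} should contribute precisely the increment $p(k+1;i,j)-p(k;i,j)$, which is absorbed into the parity shifts on the source and target.

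For exactness, the $(q,\pi)$-graded characters of the terms, computed via Theorem \ref{T:PBW} and \eqref{E:dimqpNH}, satisfy an alternating-sum identity that is proportional to the image of $\sum_k (-1)^k \pi^{p(k;i,j)} \theta_i^{(N-k)} \theta_j \theta_i^{(k)}$ under the natural map to the formal character ring; this element vanishes in $\fap$ by Theorem \ref{th:serre}, so the Euler characteristic of the complex is zero. Combined with $d^2=0$ and the fact that $\Hm$ is almost positively graded, a standard minimality argument upgrades this to genuine exactness, and bounded exact complexes of projectives then split, yielding the asserted isomorphism between the even- and odd-indexed direct summands. The main technical obstacle I anticipate is the sign bookkeeping in step two: the ordinary $(-1)$'s from the skew-commutation relations and the $\pi$-signs from parity shifts must assemble precisely into $p(k;i,j)=kp(i)p(j)+\tfrac12 k(k-1)p(i)$ from \eqref{E:pkij}, and the scalar produced by the divided-difference calculation must simultaneously carry the sign $(-1)^k$ matching the algebraic Serre relation used in step three; any off-by-one error here would destroy either $d^2=0$ or the Euler characteristic match.
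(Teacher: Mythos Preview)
Your third step contains a genuine gap: vanishing of the Euler characteristic together with $d^2=0$ does not imply exactness, and there is no ``standard minimality argument'' that rescues this. A complex such as $0\to P\xrightarrow{0}P\to 0$ has zero class in $K_0$ and is trivially minimal, yet is not exact. At best, once you know the alternating sum of the $[P_{i^{(N-k)}ji^{(k)}}]$ vanishes in $K_0$ (which you can indeed extract from the bilinear-form computation underlying Theorem~\ref{th:serre}, without circularity), Krull--Schmidt would give you the direct-sum isomorphism in the ``In particular'' clause, but not the split exact sequence asserted in the theorem. You would still owe a proof that your specific differentials are the components of a contractible complex.

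The paper does not argue via characters at all. Instead it constructs, alongside the differentials $\af_{k+1,k}=\tau_1\cdots\tau_{k+1}\bfe(k)$, explicit maps in the reverse direction $\af_{k,k+1}=\tau_{n-1}\cdots\tau_{k+1}\bfe(k+1)$, and the heart of the proof is the identity
\[
(-1)^{k}\bigl(\af_{k,k-1}\af_{k-1,k}+\af_{k,k+1}\af_{k+1,k}\bigr)=(-1)^{d_{ij}}\xi\,\bfe(k),
\]
obtained by expanding $\tau_{k+1}\tau_k\tau_{k+1}-\tau_k\tau_{k+1}\tau_k$ via the braid-like relation~\eqref{E:Braid Relations}, computing the resulting $\calQ_{ij}$-difference quotient explicitly (this is where the structure of $\calQ_{ij}$ forced by Lemma~\ref{L:Qconditions} and condition (C6) enters), and then applying Lemma~\ref{L:Annihilators} to kill all but one monomial. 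Together with the boundary cases $\af_{0,1}\af_{1,0}=(-1)^{d_{ij}}\bfe(0)$ and $\af_{n-1,n-2}\af_{n-2,n-1}=(-1)^{d_{ij}}\bfe(n-1)$, this exhibits $\af''\af'$ as the identity on the odd-indexed sum, proving the splitting directly. The verification that $\af_{k+1,k}\af_{k,k-1}=0$ is also more delicate than you suggest: it is done by induction on $N$, sliding a braid relation past the idempotent via Lemma~\ref{L:IdemIdentities} and reducing to the statement one rank lower. Finally, the parity bookkeeping is handled at the very end by computing $p(\af_{k+1,k})\equiv k+p(j)\pmod 2$ and matching this against $p(k;i,j)$; this is where the $\Pi$-shifts in the statement are pinned down.
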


\begin{proof}
Assume $i$ is even, so $p(k;i,j)\equiv0$. Then, this result is
proved in \cite{KL2} (all the relevant maps being even). The main
technical tools needed are Lemma \ref{L:idempotent identities} and
Lemma \ref{L:Annihilators}. From now on, we assume $i$ is odd. The
necessary technical facts carry over and, once we keep careful track
of the parity, essentially the same proof as in \cite{KL2} works.

In the special case $a_{ij}=0$, the theorem states that there is an
isomorphism $P_{ji}\cong \Pi^{p(j)}P_{ij}$. The relevant map is
given by right multiplication by $\tau_1e(ij)\in\Hm(\af_i+\af_j)$.
In order to be a morphism in $\Projm(\af_i+\af_j)$, this map must
preserve the $\Z_2$-grading. To see this, note that $\tau_1e(ij)$ is
odd when $j$ is odd, and even when $j$ is even. In particular, the
corresponding morphism is a parity preserving isomorphism $P_{ji}\to
\Pi^{p(j)} P_{ij}$, as required.

Now consider the case $a_{ij}\neq0$. Let $n=N+1$, and write
$P_k=P_{i^{(k)}ji^{(n-k-1)}}$, and
$\bfe(k)=\bfe_{i^{(k)}ji^{(n-k-1)}}$ ($0\leq k<n$). Let
$$\af_{k,k+1}=\tau_{n-1}\cdots\tau_{k+2}\tau_{k+1}\bfe({k+1}),$$
and
$$\af_{k+1,k}=\tau_1\tau_2\cdots\tau_{k+1}\bfe(k),$$
for $0\leq k<n$. Right multiplication by $\af_{k,k+1}$ and
$\af_{k+1,k}$ define elements of
$$\Homm_{N\af_i+\af_j}(\Pi^{p(N-k;i,j)}P_{k}, \Pi^{p(N-k-1;i,j)}P_{k+1}),$$
and
$$\Homm_{N\af_i+\af_j}(\Pi^{p(N-k-1;i,j)}P_{k+1}, \Pi^{p(N-k;i,j)}P_{k}),$$
respectively. 

Assume $0\leq k<n-1$. Via the graphical calculus developed in \cite{KL2} one readily shows
by Lemma~ \ref{L:idempotent identities} that
\begin{align*}
\af_{k,k+1}\af_{k+1,k}&=\tau_{n-1}\cdots\tau_{k+1}\bfe(k+1)\tau_1\cdots\tau_{k+1}\bfe(k)\\
    &=\tau_{n-1}\cdots\tau_{k+1}\tau_1\cdots\tau_{k+1}\bfe(k)\\
 &=(-1)^{k-1}(\tau_{n-1}\cdots\tau_{k+2})
  (\tau_1\cdots\tau_{k-1})(\tau_{k+1}\tau_k\tau_{k+1})\bfe(k)\\
 &=(-1)^{(k-1)(n-k-1)}(\tau_1\cdots\tau_{k-1})
 (\tau_{n-1}\cdots\tau_{k+2})(\tau_{k+1}\tau_k\tau_{k+1})\bfe(k)
\end{align*}
where the minus signs are due to \eqref{E:Relations7}. Similarly, for $0<k\leq n-1$,
\begin{align*}
\af_{k,k-1}\af_{k-1,k}&=\tau_1\cdots\tau_k\tau_{n-1}\cdots\tau_k\bfe(k)\\
    &=(-1)^{(n-1)-(k+2)+1}(\tau_1\cdots\tau_{k-1})
    (\tau_{n-1}\cdots\tau_{k+2})(\tau_k\tau_{k+1}\tau_k)\bfe(k),
\end{align*}
cf. \cite[p.2693]{KL2}. Using (C4), $n-1=N=1-a_{ij}$ is odd, so
$(n-1)-(k+2)+1\equiv k$ and $(k-1)(n-k-1)\equiv (k-1)$ (mod 2). Now,
we have
\begin{align}\label{E:alphadifference}
(-1)^{k}(\af_{k,k-1}&\af_{k-1,k}+\af_{k,k+1}\af_{k+1,k})=\\
\nonumber=&(-1)^{k}\af_{k,k-1}\af_{k-1,k}-(-1)^{k-1}\af_{k,k+1}\af_{k+1,k}\\
\nonumber=&\tau_1\cdots\tau_{k-1}\tau_{n-1}\cdots\tau_{k+2}
(\tau_k\tau_{k+1}\tau_k-\tau_{k+1}\tau_k\tau_{k+1})\bfe(k)\\
\nonumber=&\tau_1\cdots\tau_{k-1}\tau_{n-1}\cdots\tau_{k+2}\\
\nonumber&\times\left((-1)^{p(j)}(y_{k+2}-y_k)\frac{\calQ_{ij}
(y_{k+2},y_{k+1})-\calQ_{ij}(y_k,y_{k+1})}{y_{k+2}^2-y_k^2}\right)\bfe(k).
\end{align}
Set $\xi=(-1)^{1+p(j)}$. Then, it follows from the proof of Lemma
\ref{L:Qconditions} that
$$
\calQ_{ij}(u,v)=(-1)^d(u^{2a}+\xi
v^{2b})^c=(-1)^d\sum_{m=0}^c\xi^{c-m}{c\choose
m}u^{2am}v^{2b(c-m)}
$$
for some $a,b,c,d\in\Z_{\geq0}$, where $d=d_{ij}$, $2ac=-a_{ij}$ and
$2bc=-a_{ji}$. Now, a calculation gives
\begin{align}\label{E:Qbraid}
(-1)^{p(j)+d} & (y_{k+2}-y_k)\frac{\calQ_{ij}(y_{k+2},y_{k+1})
-\calQ_{ij}(y_k,y_{k+1})}{y_{k+2}^2-y_k^2} \\
\nonumber&=(-1)^{d+1}\sum_{m=0}^c\sum_{l=1}^{am}\xi^{c-m+1}{c\choose
m}(y_{k+2}^{2(am-l)+1}y_k^{2l-2}-y_{k+2}^{2(am-l)}y_k^{2l-1})y_{k+1}^{2b(c-m)}.
\end{align}
Since $\bfe(k)=\bfe_{i,k}\otimes 1_{\af_j}\otimes \bfe_{i,n-k-1}$,
$y_{k+1}\bfe(k)=\bfe(k)y_{k+1}\bfe(k)$, and we see from Lemma
\ref{L:Annihilators} that the only terms in  terms in
\eqref{E:Qbraid} which are nonzero in \eqref{E:alphadifference}
involve the monomial $y_{k+2}^{n-k-2}y_k^{k-1}$ (see
\cite[p.2694]{KL2} for a graphical interpretation). By degree
considerations, the  terms in \eqref{E:Qbraid} with $m<c$ do not
contribute to \eqref{E:alphadifference}, and so
\begin{align*}
(-1)^{k} &(\af_{k,k-1}\af_{k-1,k}+\af_{k,k+1}\af_{k+1,k})
 \\
=&(-1)^{d+1}\xi\tau_1\cdots\tau_{k-1}\tau_{n-1}\cdots\tau_{k+2}
\sum_{l=1}^{-a_{ij}/2}(y_{k+2}^{-a_{ij}-2l+1}y_k^{2l-2}
-y_{k+2}^{-a_{ij}-2l}y_k^{2l-1})\bfe(k).
\end{align*}
The monomial $y_{k+2}^{n-k-2}y_k^{k-1}$ occurs when
$l=\frac{k+1}{2}$ or $l=\frac{k}{2}$, depending on whether $k$ is
even or odd. In either case, we arrive at
\begin{align}\label{E:afaf+afaf}
(-1)^{k}(\af_{k,k-1}\af_{k-1,k}+\af_{k,k+1}\af_{k+1,k})
=(-1)^{d+1}\xi(-1)^{n-1}\bfe(k)=(-1)^d\xi\bfe(k),
\end{align}
where we have used that $n-1=N=1-a_{ij}$ is odd by (C4).

For $k=n-1$, we have
\begin{align*}
\af_{n-1,n-2}\af_{n-2,n-1}&=\tau_{1}\cdots\tau_{n-2}\tau_{n-1}^2\bfe(n-1)\\
    &=\tau_{1}\cdots\tau_{n-2}\mathsf{Q}_{ij}(y_{n-1},y_{n})\bfe(n-1)\\
    &=(-1)^d\sum_{m=0}^c\xi^{c-m}{c\choose m}y_{n}^{2b(m-c)}(\tau_{1}
    \cdots\tau_{n-2})y_{n-1}^{2am}\bfe(n-1).
\end{align*}
Using Lemma \ref{L:Annihilators} applied to
$\Hm((n-1)\af_i)\subset\Hm(n\af_i)$, the only nonzero term in the
sum above corresponds to $m=c$ (so the exponent of $y_n$ is
$2ac=-a_{ij}=(n-1)-1$), and gives
$$
\af_{n-1,n-2}\af_{n-2,n-1}=(-1)^{d_{ij}}\bfe(n-1).
$$
By a similar argument, adapted to the copy
${}^\dagger\Hm((n-1)\af_i)\subset\Hm(n\af_i)$ of $\Hm((n-1)\af_i)$
\emph{embedded on the right}, we can show that
$$\af_{0,1}\af_{1,0}=(-1)^{d_{ij}}\bfe(0).$$

The maps $\af_{k+1,k}$ are going to be maps in a chain complex.
Therefore, we prove by induction that, for \emph{any} $N\geq2$,
$\af_{k+1,k}\af_{k,k-1}=0$ in $\Hm(N\af_i+\af_j)$, for $1\leq k\leq
N-1$. We want to emphasize that this particular statement holds
independently of the meaning of $N$ given in the statement of the
theorem (as will be necessary when making the inductive step). To
that end, let
\begin{align}\label{E:uik}
\ui_k=(\underbrace{i,\ldots,i}_{k},j,\underbrace{i,\ldots,i}_{n-k-1}).
\end{align}
Then, we have $\bfe(k)=e(\ui_k)\bfe(k)$.

When $k=1$, we calculate as above that
\begin{align*}
\af_{2,1}\af_{1,0}=\tau_1\tau_2\bfe(1)\tau_1\bfe(0)=\tau_1\tau_2\tau_1\bfe(0).
\end{align*}
Recalling that $\bfe(0)=\bfe(0)=1_{\af_j}\otimes e_{i,n-1}$, we
deduce from properties of the spin nilHecke algebra that
$\tau_2\bfe(0)=0$. Therefore, applying \eqref{E:Braid Relations},
\begin{align*}
\af_{2,1}\af_{1,0}=\tau_1\tau_2\tau_1e(\ui_0)\bfe(0)=\tau_2\tau_1\tau_2\bfe(0)=0
\end{align*}
 proving the base case. For the inductive step,
\begin{align*}
\af_{k+1,k}\af_{k,k-1}&=\tau_1\cdots\tau_{k+1}\bfe(k)\tau_1\cdots\tau_k\bfe(k-1)\\
    &=\tau_{1}\cdots\tau_{k+1}\tau_{1}\cdots\tau_{k}e(\ui_{k-1})\bfe(k-1)\\
    &=\tau_{1}\cdots\tau_{k+1}\tau_{1}e(\ui_k)\tau_2\cdots\tau_{k}\bfe(k-1)\\
    &=(-1)^{p(j)+k-2}(\tau_1\tau_2\tau_1)e(\ui_{k+1})(\tau_3\cdots\tau_{k+1}
    \tau_2\cdots\tau_k)\bfe(k-1)\\
    &=(-1)^{p(j)+k-2}(\tau_2\tau_1\tau_2)(\tau_3\cdots\tau_{k+1}\tau_2\cdots\tau_k)\bfe(k-1),
\end{align*}
where we have again use \eqref{E:Braid Relations}. By Lemma
\ref{L:IdemIdentities}, we have that
$$
\bfe(k-1)={}^\dagger\bfe(k-2)\bfe(k-1),
$$
where ${}^\dagger\bfe(k-2)=1_{\af_i}\otimes e_{i,k-2}\otimes
1_{\af_j}\otimes e_{i,n-k}$. Therefore, we may apply induction using
${}^\dagger\Hm((N-1)\af_i+\af_j)\subset\Hm(N\af_i+\af_j)$ to show
that
$$
\tau_2\cdots\tau_{k+1}\tau_2\cdots\tau_k\bfe(k-1)=0.
$$

Finally, we may define isomorphisms
\begin{align*}
\xymatrix{{\displaystyle\bigoplus_{k=0}^{\frac{n-2}{2}}
\Pi^{p(N-2k-1;i,j)}P_{2k+1}}\ar@/^/[r]^{\af'}
&{\displaystyle\bigoplus_{k=0}^{\frac{n}{2}}
\Pi^{p(N-2k;i,j)}P_{2k}}\ar[l]^{\af''}}
\end{align*}
where $\af', \af''$ are given by the sum of maps
$$
\af'=\sum_{k=0}^{n/2-1}(-\xi)(\af_{2k+1,2k}
+\af_{2k+1,2k+2})+\af_{n-1,n-2}
$$
and
$$
\af''=\sum_{k=0}^{n/2}(-1)^{d_{ij}}(\af_{2k,2k+1}
+\af_{2k,2k-1})+(-1)^{d_{ij}}\af_{n-2,n-1}.
$$

For the last step, we need to check that these are actually
morphisms in our category. That is, we need to show that these are
\emph{even} elements of the respective $\Homm$-spaces. Recall
$\ui_k$ from \eqref{E:uik}. We may rewrite
$$
\af_{k,k+1}=\tau_{n-1}\cdots\tau_{k+1}e(\ui_{k+1})\bfe_{k+1}
=\tau_{n-1}e(\ui_{k})\cdots\tau_{k+2}e(\ui_{k})\tau_{k+1}e(\ui_{k+1})\bfe(k+1),
$$
and
$$
\af_{k+1,k}=\tau_1\cdots\tau_{k+1}e(\ui_{k})\bfe_{k}
=\tau_1e(\ui_{k+1})\cdots \tau_{k}e(\ui_{k+1})\tau_{k+1}e(\ui_{k})\bfe(k).
$$
Therefore, the parity of $\af_{k+1,k}$ is $k+p(j)$ (mod 2) and the
parity of $\af_{k,k+1}$ is $(n-1)-(k+2)+1+p(j)\equiv k+p(j)$ (mod
2), where we have again used that $n-1=N=1-a_{ij}$ is odd.

When $j\in I_\zero$, this means we have the following sequence of
morphisms in our category:
$$
\xymatrix{0\ar[r]&P_N\ar[r]&P_{N-1}\ar[r] &\Pi
P_{N-2}\ar[r]\ar[r]&\cdots\ar[r]&\Pi^{N\choose 2}P_0\ar[r]&0}
$$
where the maps are
$$
\af_{N-k,N-k-1}\in\Hom_{N\af_i+\af_j}(\Pi^{k\choose
2}P_{N-k},\Pi^{{k+1}\choose 2}P_{N-k-1}).
$$
For $j\in I_\one$,
$$
\xymatrix{0\ar[r]&P_N\ar[r]&\Pi P_{N-1}\ar[r]
 &\Pi P_{N-2}\ar[r]&\cdots\ar[r]&\Pi^{{N+1}\choose 2}P_0\ar[r]&0}
$$
is a sequence of morphisms in our category, where the maps are given by
$$
\af_{N-k,N-k-1}\in\Hom_{N\af_i+\af_j}(\Pi^{{k+1}\choose
2}P_{N-k},\Pi^{{k+2}\choose 2}P_{N-k-1}).
$$
Now, the proof follows from the simple computation
$$
p(k;i,j)=\begin{cases}{k\choose2}&\mbox{if }j\in
I_\zero,\\{{k+1}\choose2}&\mbox{if }j\in I_\one.\end{cases}
$$
\end{proof}

\begin{example}
Assume $a_{ij}=-4$ (so $n=6$), where $i$ is odd and $j$ is even,
say.  Then, the maps $\af'$ and $\af''$ are defined by summing the
maps down the left and right columns, respectively, in the following
diagram:
\[
\xymatrix{ P_1 \ar[rr]^{-\xi\af_{1,0}}\ar[ddrr]_{-\xi\af_{1,2}}
 && \Pi P_0 \ar[rr]^{(-1)^{d_{ij}}\af_{0,1}} && P_1 \\
 \oplus && \oplus && \oplus \\
 \Pi P_3 \ar[rr]_{-\xi\af_{3,2}}\ar[ddrr]_{-\xi\af_{3,4}}
 && P_2 \ar[rr]_{(-1)^{d_{ij}}\af_{2,3}}\ar[uurr]_{(-1)^{d_{ij}}\af_{2,1}}
 && \Pi P_3 \\
 \oplus
 && \oplus && \oplus \\
 P_5 \ar[rr]_{\af_{5,4}} && \Pi P_4 \ar[rr]_{(-1)^{d_{ij}}\af_{4,5}}
 \ar[uurr]_{\af_{4,3}}
 && P_5 }
\]
In particular, it follows by \eqref{E:afaf+afaf} that the
composition is
\begin{align*}
\af''\af'
 &=(-1)^{d_{ij}}\xi(\af_{1,0}\af_{0,1}+\af_{1,2}\af_{2,1})
  +(-1)^{d_{ij}}\xi(\af_{3,2}\af_{2,3}+\af_{3,4}\af_{4,3})+(-1)^{d_{ij}}
  \af_{5,4}\af_{4,5}\\
 &=\bfe(1)+\bfe(3)+\bfe(5).
\end{align*}
\end{example}

\section{Categorification of quantum superalgebras} \label{S:Categorification}

\subsection{Induction and restriction}\label{subsec:IndRes}

Let $\mu,\nu\in Q^+$. Assume throughout the section that $\height\,
\mu=m$ and $\height\, \nu=n$, and let $D_{m,n}$ be the set of
minimal left $S_m\times S_n$-coset representatives in $S_{m+n}$.
The natural embedding
$\Hm(\mu)\otimes\Hm(\nu)\hookrightarrow\Hm(\mu+\nu)$ maps
$e(\ui)\otimes e(\uj)$ to $e(\ui\uj)$ for all $\ui\in I^\mu$ and
$\uj\in I^\nu$, and $\ui\uj\in I^{\mu+\nu}$ is obtained by
concatenation. The image of the identity element $1_\mu\otimes
1_\nu$ is the idempotent
$$1_{\mu,\nu}=\sum_{\ui\in I^\mu,\;\uj\in I^\nu}e(\ui\uj).$$
Define the functor
\begin{align}
\label{E:Restriction}
\Res_{\mu,\nu}^{\mu+\nu}:\Modm(\mu+\nu)\longrightarrow
\Modm(\mu)\otimes\Modm(\nu),
\end{align}
by
$$\Res_{\mu,\nu}^{\mu+\nu}M=1_{\mu,\nu}M,$$
and the functor
\begin{align}
\label{E:Induction}
\Ind_{\mu,\nu}^{\mu+\nu}:\Modm(\mu)\otimes\Modm(\nu)\longrightarrow
\Modm(\mu+\nu),
\end{align}
by
$$\Ind_{\mu,\nu}^{\mu+\nu}(M\otimes N)=\Hm(\mu+\nu)
1_{\mu,\nu}\bigotimes_{\Hm(\mu)\otimes\Hm(\nu)}(M\otimes N).$$

\begin{prp}\label{P:free}
The module $1_{\mu,\nu}\Hm(\mu+\nu)$ is a free graded left
$\Hm(\mu)\otimes\Hm(\nu)$ module.
\end{prp}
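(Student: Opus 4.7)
I would exhibit an explicit free $\Hm(\mu)\otimes\Hm(\nu)$-basis of $1_{\mu,\nu}\Hm(\mu+\nu)$ by combining the PBW theorem (Theorem~\ref{T:PBW}) with the minimal-length coset decomposition of $S_{m+n}$. Since the module structure is by left multiplication, I would use the decomposition $S_{m+n}=\bigsqcup_{w\in D}(S_m\times S_n)\,w$ into right cosets, where $D$ consists of minimal length representatives (so $\ell(xw)=\ell(x)+\ell(w)$ for $x\in S_m\times S_n$, $w\in D$); for each $w\in D$ fix a reduced expression and the corresponding element $\tau_w\in\Hm(\mu+\nu)$. The candidate basis is
\[
\mathcal{B}_{\mu,\nu}=\{\,1_{\mu,\nu}\tau_w e(\ul)\,:\,w\in D,\ \ul\in I^{\mu+\nu},\ 1_{\mu,\nu}e(w\cdot\ul)\neq 0\,\}.
\]

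For spanning, I start with a PBW basis element $1_{\mu,\nu}\tau_u y^{\underline{a}}e(\ul)$ from Theorem~\ref{T:PBW}. Decomposing $u=xw$ uniquely and invoking Proposition~\ref{P:Dependence on Reduced Words}(a) yields
\[
\tau_u e(\ul)=\pm\tau_x\tau_w e(\ul)+\sum_{u'<u}\tau_{u'}f_{u'}(y)e(\ul);
\]
the leading term equals $\pm\tau_x\cdot\big(1_{\mu,\nu}\tau_w e(\ul)\big)\cdot y^{\underline{a}}$ with $\tau_x\in\Hm(\mu)\otimes\Hm(\nu)$. Commuting $y^{\underline{a}}$ back through $\tau_w$ via relations~\eqref{E:Relations6}--\eqref{E:Relations9} expresses everything as a left $\Hm(\mu)\otimes\Hm(\nu)$-linear combination of members of $\mathcal{B}_{\mu,\nu}$ plus further Bruhat-lower remainders, and a downward induction on $\ell(u)$ finishes the spanning claim. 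For linear independence I would compare graded $(q,\pi)$-dimensions: the bijection $u\leftrightarrow(x,w)$ refines the PBW count of $1_{\mu,\nu}\Hm(\mu+\nu)$ into exactly the rank of the free $\Hm(\mu)\otimes\Hm(\nu)$-module on $\mathcal{B}_{\mu,\nu}$, since $x\in S_m\times S_n$ acts within each of $\{1,\ldots,m\}$ and $\{m+1,\ldots,m+n\}$ and hence preserves the condition $w\cdot\ul\in I^\mu I^\nu$. Any nontrivial $\Hm(\mu)\otimes\Hm(\nu)$-relation among members of $\mathcal{B}_{\mu,\nu}$ would expand, via the PBW basis of $\Hm(\mu)\otimes\Hm(\nu)$, into a $\bbk$-dependence among PBW basis elements of $\Hm(\mu+\nu)$, contradicting Theorem~\ref{T:PBW}.

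The principal obstacle is super-sign bookkeeping. Relations~\eqref{E:Relations5}--\eqref{E:Relations9} introduce ordinary signs $(-1)^{p(\cdot)p(\cdot)}$ whenever odd Jucys--Murphy or intertwining elements are transposed, and these must be tracked when expanding $\tau_u$ as $\pm\tau_x\tau_w$ and when pushing $y^{\underline{a}}$ past $\tau_w$. Since the signs do not involve $\pi$, they leave the $(q,\pi)$-dimension matching intact, so freeness itself is not threatened; however, a clean identification of coefficients demands a parity-sensitive adaptation of the standard KLR rearrangement argument, which is where the bulk of the technical work of the proof will reside.
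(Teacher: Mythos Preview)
Your strategy---combine the PBW theorem with the coset decomposition $S_{m+n}=(S_m\times S_n)\,D_{m,n}$---is exactly the paper's approach; the paper's one-line proof simply asserts that $\{1_{\mu,\nu}\tau_w : w\in D_{m,n}\}$ is a free left $\Hm(\mu)\otimes\Hm(\nu)$-basis, leaving the PBW verification you sketch implicit.

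There is, however, a genuine slip in your candidate basis $\mathcal{B}_{\mu,\nu}$: the elements $1_{\mu,\nu}\tau_w e(\ul)$ are \emph{not} free generators. For $b=1_{\mu,\nu}\tau_w e(\ul)$ with $w\cdot\ul=\ui\uj$, the idempotent $e(\ui'\uj')\in\Hm(\mu)\otimes\Hm(\nu)$ annihilates $b$ whenever $\ui'\uj'\neq\ui\uj$; thus the cyclic left module generated by $b$ is the proper projective summand $(\Hm(\mu)\otimes\Hm(\nu))e(\ui\uj)$, not a free rank-one module. Your linear-independence argument (``any relation expands to a $\bbk$-dependence among PBW elements'') fails precisely here, and your dimension comparison, if carried out, would detect the overcount: the free module on $\mathcal{B}_{\mu,\nu}$ has rank $|D_{m,n}|\cdot|I^\mu|\cdot|I^\nu|$ rather than $|D_{m,n}|$.

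The fix is to drop the $e(\ul)$'s and take $\{1_{\mu,\nu}\tau_w : w\in D_{m,n}\}$ as the basis, exactly as the paper does. For fixed $w$, summing your elements over all admissible $\ul$ recovers $1_{\mu,\nu}\tau_w$, and the corresponding projectives $(\Hm(\mu)\otimes\Hm(\nu))e(w\cdot\ul)$ assemble into a single free copy of $\Hm(\mu)\otimes\Hm(\nu)$. With this correction your spanning argument (via Proposition~\ref{P:Dependence on Reduced Words} and downward Bruhat induction) and your PBW-based independence argument both go through.
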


\begin{proof}
The set  $\{1_{\mu,\nu}\tau_w|w\in D_{m,n}\}$ is a basis for
$1_{\mu,\nu}\Hm(\mu+\nu)$ as a free graded left
$\Hm(\mu)\otimes\Hm(\nu)$-module.
\end{proof}

\begin{cor}\label{C:free}
The functors $\Res_{\mu,\nu}^{\mu+\nu}$ and
$\Ind_{\mu,\nu}^{\mu+\nu}$ take finitely generated projective
modules to finitely generated projective modules.
\end{cor}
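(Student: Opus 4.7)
The plan is to deduce both statements from Proposition \ref{P:free} by bookkeeping which side of the bimodule $1_{\mu,\nu}\Hm(\mu+\nu)$ (respectively $\Hm(\mu+\nu)1_{\mu,\nu}$) carries the relevant module structure, and then observing that both functors are realized as tensor products with a bimodule that is projective on each side.

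For the restriction functor, the key observation is that $\Res_{\mu,\nu}^{\mu+\nu}$ is realized as $1_{\mu,\nu}\otimes_{\Hm(\mu+\nu)}(-)$, and $1_{\mu,\nu}\Hm(\mu+\nu)$ is a free graded left $\Hm(\mu)\otimes\Hm(\nu)$-module of finite rank $|D_{m,n}|$ by Proposition~\ref{P:free}. I would first note that any finitely generated projective $M\in\Modm(\mu+\nu)$ is a direct summand of a finite direct sum of shifted and parity-shifted copies of $\Hm(\mu+\nu)$ itself (equivalently, of its projective indecomposables $P_{\ui}$). Applying the additive functor $1_{\mu,\nu}\cdot$ commutes with direct sums and preserves summands, so $1_{\mu,\nu}M$ is a summand of a finite direct sum of shifted copies of $1_{\mu,\nu}\Hm(\mu+\nu)$; the latter being free of finite rank over $\Hm(\mu)\otimes\Hm(\nu)$ gives that $\Res_{\mu,\nu}^{\mu+\nu}M$ is finitely generated projective.

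For the induction functor, I would first establish the right-handed analogue of Proposition~\ref{P:free}: the module $\Hm(\mu+\nu)1_{\mu,\nu}$ is a free graded right $\Hm(\mu)\otimes\Hm(\nu)$-module with basis $\{\tau_{w^{-1}}1_{\mu,\nu}\,|\,w\in D_{m,n}\}$. This can be derived either by a direct PBW-type argument paralleling Proposition~\ref{P:free}, or more cleanly by transporting the left-module basis through the anti-automorphism $\psi$ from Proposition~\ref{P:Anti-AutomorphismPsi}, noting that $\psi(1_{\mu,\nu})=1_{\mu,\nu}$ and that $\psi$ respects the subalgebra $\Hm(\mu)\otimes\Hm(\nu)\hookrightarrow\Hm(\mu+\nu)$. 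Simultaneously, the bimodule $\Hm(\mu+\nu)1_{\mu,\nu}=\bigoplus_{\ui\in I^\mu,\,\uj\in I^\nu}\Hm(\mu+\nu)e(\ui\uj)$ is manifestly a direct summand of $\Hm(\mu+\nu)$ as a left $\Hm(\mu+\nu)$-module, hence is projective on the left as well.

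Given a finitely generated projective $P\in\Modm(\mu)\otimes\Modm(\nu)$, write $P$ as a direct summand of a finite direct sum of shifted copies of $\Hm(\mu)\otimes\Hm(\nu)$. Since $\Hm(\mu+\nu)1_{\mu,\nu}\otimes_{\Hm(\mu)\otimes\Hm(\nu)}(\Hm(\mu)\otimes\Hm(\nu))\cong\Hm(\mu+\nu)1_{\mu,\nu}$ and tensor products commute with direct sums and preserve summands, $\Ind_{\mu,\nu}^{\mu+\nu}P$ is a summand of finitely many shifted copies of $\Hm(\mu+\nu)1_{\mu,\nu}$, which in turn is a summand of $\Hm(\mu+\nu)$; this delivers the required finitely generated projectivity in $\Modm(\mu+\nu)$. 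The only potential subtlety, which I would be careful about, is the parity bookkeeping in the super setting: the grading and parity shifts must be tracked through the tensor product as in \eqref{E:cattensorform}, but since all the isomorphisms above are homogeneous of bi-degree zero, no sign issues arise.
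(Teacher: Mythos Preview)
Your argument is correct and is exactly the standard deduction the paper has in mind; the paper states the corollary without proof, treating it as immediate from Proposition~\ref{P:free}. One minor remark: for induction you do not actually need the right-module freeness of $\Hm(\mu+\nu)1_{\mu,\nu}$ that you establish via $\psi$---the observation that $\Hm(\mu+\nu)1_{\mu,\nu}$ is a direct summand of $\Hm(\mu+\nu)$ as a left module, together with additivity of $\Ind$, already suffices.
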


We have
$$
\Ind_{\mu,\nu}^{\mu+\nu}( P_\ui\otimes P_\uj) =P_{\ui\uj}.
$$
Passing to direct summands, we deduce that the same holds if we
replace $P_\ui$ and $P_{\uj}$ by $P_{\ui^{(\uk)}}$ and
$P_{\uj^{(\ul)}}$, respectively.

For any $\nu\in Q^+$, define the parity
\begin{align}\label{E:pnu}
p(\nu)=p(i_1)+\cdots+ p(i_n),
\end{align}
where $\ui=(i_1,\ldots,i_n)\in I^\nu$.

\begin{thm} [Mackey Theorem]  \label{T:Mackey}
Let $\mu_1,\mu_2,\nu_1,\nu_2\in Q^+$ be such that
$\mu_1+\nu_1=\mu_2+\nu_2=\af$. Then, the graded
$(\Hm(\mu_1)\otimes\Hm(\nu_1),\Hm(\mu_2)\otimes\Hm(\nu_2))$-bimodule
$$
\Res^{\af}_{\mu_1,\nu_1}\Ind_{\mu_2,\nu_2}^{\af}(\Hm(\mu_2)\otimes\Hm(\nu_2))
$$
has a filtration by graded bimodules isomorphic to
\begin{align*}
\Pi^{p(\ld)p(\nu_1+\ld-\nu_2)}\bigg(&(1_{\mu_1}
\Hm(\af)1_{\mu_1-\ld,\ld}\otimes1_{\nu_1}\Hm(\af)1_{\nu_1+\ld-\nu_2,\nu_2-\ld})\\
\otimes_{H}&(1_{\mu_1-\ld,\mu_2-\ld-\mu_1}\Hm(\af)1_{\mu_2}\otimes
1_{\ld,\nu_2-\ld}\Hm(\af)1_{\nu_2})\bigg)\{-(\ld,\nu_1+\ld-\nu_2)\}
\end{align*}
over all $\ld\in Q^+$ such that all the terms above are in $Q^+$. In
this expression, we have denoted
$$
H=\Hm(\mu_1-\ld)\otimes\Hm(\ld)\otimes\Hm(\nu_1+\ld-\nu_2)\otimes\Hm(\nu_2-\ld).
$$
\end{thm}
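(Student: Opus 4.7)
My plan is to adapt the Mackey filtration argument for KLR algebras \cite{KL1, Ro1} to the spin quiver Hecke setting, paying careful attention to the super signs arising from the relations of $\S$\ref{subsec:gen&rel}. By Proposition~\ref{P:free} and Theorem~\ref{T:PBW}, the bimodule $1_{\mu_1,\nu_1}\Hm(\af)1_{\mu_2,\nu_2}$ is spanned over $\bbk$ by elements $e(\ui)\tau_w P(y)e(\uj)$ with $\ui\in I^{\mu_1}I^{\nu_1}$, $\uj\in I^{\mu_2}I^{\nu_2}$, $w\in S_{m+n}$, and $P(y)$ a monomial in $y_1,\ldots,y_{m+n}$. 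The first step is to organize these basis elements according to the $(S_{\mu_1}\times S_{\nu_1})$-$(S_{\mu_2}\times S_{\nu_2})$-double coset of $w$ in $S_{m+n}$. By the standard block-matrix bijection, double cosets are parameterized by $\ld\in Q^+$ subject to the positivity constraints stated in the theorem, with $\ld$ recording the weight of the ``top-left block'' of the associated $2\times 2$ matrix of compositions whose row sums are $(\mu_1,\nu_1)$ and column sums are $(\mu_2,\nu_2)$.

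Next, I would fix minimal-length double coset representatives $w_\ld$, totally order the $\ld$'s in a Bruhat-compatible fashion, and define $F_{\leq\ld}$ as the $\bbk$-span of those basis elements whose underlying permutation belongs to a double coset indexed by some $\ld'\leq\ld$. Using the commutation and braid-like relations \eqref{E:Relations6}--\eqref{E:Relations9} and \eqref{E:Braid Relations}, one verifies that $F_{\leq\ld}$ is stable under left multiplication by $\Hm(\mu_1)\otimes\Hm(\nu_1)$ and right multiplication by $\Hm(\mu_2)\otimes\Hm(\nu_2)$: any reduction of a non-reduced expression either preserves the double coset class or strictly shortens the underlying permutation, and applications of \eqref{E:Quadratic Relations} and \eqref{E:Braid Relations} only introduce extra $y$-factors absorbed into lower pieces of the filtration. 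The successive quotient $F_{\leq\ld}/F_{<\ld}$ is then generated by the image of $\tau_{w_\ld}$, and since $w_\ld$ conjugates the refined parabolic subalgebra $H=\Hm(\mu_1-\ld)\otimes\Hm(\ld)\otimes\Hm(\nu_1+\ld-\nu_2)\otimes\Hm(\nu_2-\ld)$ in the prescribed way, this quotient is identified with the tensor product of induced bimodule pieces displayed in the theorem.

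What remains is the grading/parity shift computation. By \eqref{E:ZGrading}, the degree of $\tau_{w_\ld}$ equals $-\sum(\af_{i_r},\af_{i_s})$ summed over the inversions of $w_\ld$; since $w_\ld$ precisely interchanges the two off-diagonal blocks of weights $\ld$ and $\nu_1+\ld-\nu_2$, this sum collapses to $-(\ld,\nu_1+\ld-\nu_2)$, yielding the shift $\{-(\ld,\nu_1+\ld-\nu_2)\}$. Similarly, by \eqref{E:Z2Grading}, the parity of $\tau_{w_\ld}$ is the mod-$2$ sum of $p(i_r)p(i_{r+1})$ over crossings of $w_\ld$, which after a combinatorial regrouping collapses to $p(\ld)p(\nu_1+\ld-\nu_2)$, reflecting exactly the super-interchange of those two blocks. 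I expect the main obstacle to be the bookkeeping of auxiliary super signs produced by \eqref{E:Relations5}--\eqref{E:Relations9} when reducing a general basis element $e(\ui)\tau_w P(y)e(\uj)$ into $\tau_{w_\ld}$-form, and verifying that every such extra sign either cancels or lies in $F_{<\ld}$; this should reduce to a length induction together with a parity argument on inversions, parallel to the sign tracking in the proofs of Proposition~\ref{prop:bform} and Theorem~\ref{th:serre}.
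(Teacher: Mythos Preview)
Your proposal is correct and follows essentially the same approach as the paper: the paper's proof is a one-line reference stating that the argument is ``exactly the same as \cite[Proposition 2.18]{KL1}, with the parity corresponding to the parity of the diagram appearing in the proof,'' and your sketch spells out precisely that adaptation --- the double-coset filtration, the identification of the subquotients, and the computation of the degree and parity shifts from the crossings of the minimal representative $w_\ld$. One small bookkeeping slip: with the matrix whose row sums are $(\mu_1,\nu_1)$ and column sums are $(\mu_2,\nu_2)$, the block $\ld$ sits in the off-diagonal position (top-right), not top-left; this does not affect your argument, since it is the two off-diagonal blocks $\ld$ and $\nu_1+\ld-\nu_2$ that are interchanged by $w_\ld$, exactly as you then use.
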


\begin{proof}
The proof is exactly the same as \cite[Proposition 2.18]{KL1}, with
the parity corresponding to the parity of the diagram appearing in
the proof.
\end{proof}

The following formula for restriction follows from Theorem
\ref{T:Mackey}.

\begin{prp}\label{P:Restriction}
For $\uk\in I^{\mu+\nu}$, we have
$$
\Res_{\mu,\nu}^{\mu+\nu}P_\uk=\bigoplus_{\substack{w\in
D_{m,n}\\\uk=w^{-1}(\ui\uj)}}\Pi^{p(\tau_we(\ui\uj))}(P_\ui\otimes
P_\uj)\{\deg(\tau_we(\ui\uj))\}.
$$
\end{prp}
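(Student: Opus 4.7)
The plan is to extract the statement directly from the basis statement in Proposition~\ref{P:free}, without invoking the full Mackey filtration of Theorem~\ref{T:Mackey}. By definition $P_\uk=\Hm(\mu+\nu)e(\uk)$, so
\[
\Res_{\mu,\nu}^{\mu+\nu}P_\uk=1_{\mu,\nu}\Hm(\mu+\nu)e(\uk),
\]
and Proposition~\ref{P:free} exhibits $1_{\mu,\nu}\Hm(\mu+\nu)$ as a free left $\Hm(\mu)\otimes\Hm(\nu)$-module with basis $\{1_{\mu,\nu}\tau_w\mid w\in D_{m,n}\}$. Right-multiplying by $e(\uk)$ immediately yields
\[
\Res_{\mu,\nu}^{\mu+\nu}P_\uk=\bigoplus_{w\in D_{m,n}}(\Hm(\mu)\otimes\Hm(\nu))\cdot 1_{\mu,\nu}\tau_we(\uk),
\]
so the task reduces to identifying each summand.

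The next step is to evaluate $1_{\mu,\nu}\tau_we(\uk)$ explicitly. Iterating the sign-free relation \eqref{E:Relations4} along any reduced expression of $w$ gives $\tau_we(\uk)=e(w\cdot\uk)\tau_w$. Left multiplication by $1_{\mu,\nu}=\sum_{\ui\in I^\mu,\,\uj\in I^\nu}e(\ui\uj)$ then annihilates the summand unless $w\cdot\uk$ splits as a concatenation $\ui\uj$ with $\ui\in I^\mu$, $\uj\in I^\nu$, that is, unless $\uk=w^{-1}(\ui\uj)$. The set of surviving $w$ matches the index set in the statement, and for each such $w$ the generator becomes $e(\ui\uj)\tau_we(\uk)\ne 0$.

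It remains to identify each surviving cyclic submodule with a projective. The map
\[
P_\ui\otimes P_\uj\;\longrightarrow\;(\Hm(\mu)\otimes\Hm(\nu))\cdot e(\ui\uj)\tau_we(\uk),\qquad ae(\ui)\otimes be(\uj)\mapsto(a\otimes b)\tau_we(\uk),
\]
is an isomorphism of ungraded $\Hm(\mu)\otimes\Hm(\nu)$-modules by the freeness statement of Proposition~\ref{P:free}. The generator $e(\ui)\otimes e(\uj)$ on the source has bi-degree $(0,0)$, while $e(\ui\uj)\tau_we(\uk)$ has $\Z$-degree $\deg(\tau_we(\ui\uj))$ and $\Z_2$-parity $p(\tau_we(\ui\uj))$, read off by applying \eqref{E:ZGrading} and \eqref{E:Z2Grading} to any reduced word for $w$ and noting that each elementary crossing $\tau_r$ contributes a bi-degree depending only on the pair of labels being swapped, which is insensitive to whether one reads the sequence at the source or the target end of the braid. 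Shifting the target accordingly produces the factor $\Pi^{p(\tau_we(\ui\uj))}(P_\ui\otimes P_\uj)\{\deg(\tau_we(\ui\uj))\}$, and the main bookkeeping obstacle is precisely this degree/parity matching. Summing the resulting summands over the allowed $w\in D_{m,n}$ yields the stated decomposition.
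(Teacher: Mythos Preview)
Your argument is correct and is essentially the standard direct proof. The paper derives the proposition as a corollary of the Mackey filtration (Theorem~\ref{T:Mackey}), but that theorem is itself established via the free basis $\{1_{\mu,\nu}\tau_w\mid w\in D_{m,n}\}$ of Proposition~\ref{P:free}; you have simply short-circuited the general bimodule statement and specialised immediately to $P_\uk$. The two routes are the same in substance, yours being the more economical for this particular consequence.

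One small point about your final bookkeeping. Your ``source/target'' remark correctly explains why the bi-degree of the generator $e(\ui\uj)\tau_we(\uk)$ can be read off either from the bottom labelling $\uk$ or from the top labelling $\ui\uj$: these are literally the same element, so $\deg(\tau_we(\uk))=\deg(e(\ui\uj)\tau_w)$. But this is \emph{not} the same as $\deg(\tau_we(\ui\uj))$ with $\ui\uj$ on the right, which is a different algebra element (its strands carry a different colouring) and in general has a different degree --- try $w=s_1s_2\in D_{1,2}$ with $\uk=(k_1,k_2,k_3)$ having $k_2\ne k_3$. What you have actually established is the decomposition with shifts $\deg(e(\ui\uj)\tau_w)$ and $p(e(\ui\uj)\tau_w)$; the expression $\tau_we(\ui\uj)$ in the stated formula should be read in that sense.
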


\subsection{Grothendieck group as a bialgebra}
The exact functors \eqref{E:Restriction} and \eqref{E:Induction}
give rise to exact functors
\begin{align*}
\Ind=\bigoplus_{\mu,\nu} \Ind_{\mu,\nu}^{\mu+\nu}: \bigoplus_{\mu,\nu\in
Q^+}\Modm(\mu)\otimes\Modm(\nu)\longrightarrow \bigoplus_{\ld\in
Q^+}\Modm(\ld),
\end{align*}
and
\begin{align*}
\Res=\bigoplus_{\substack{\ld,\mu,\nu\\\mu+\nu=\ld}}
\Res_{\mu,\nu}^\ld: \bigoplus_{\ld\in Q^+}\Modm(\ld)\longrightarrow
\bigoplus_{\mu,\nu\in Q^+}\Modm(\mu)\otimes\Modm(\nu).
\end{align*}
We set
$$
[\Projm]=\bigoplus_{\nu\in
Q^+}[\Projm(\nu)]\andeqn[\Repm]=\bigoplus_{\nu\in
Q^+}[\Repm(\nu)].
$$

For $x,y\in[\Projm]$, we will simply write $xy=[\Ind](x, y)$. Define
multiplication on $[\Projm]\otimes[\Projm]$ by
\begin{equation}
(x_1\otimes x_2)(y_1\otimes
y_2)=\pi^{p(\mu)p(\nu)}q^{-(\mu,\nu)}(x_1y_1\otimes x_2y_2),
\end{equation}
for $x_1,x_2,y_1,y_2\in[\Projm]$ such that $x_2\in[\Projm(\mu)]$ and
$y_1\in[\Projm(\nu)]$, and the notation $p(\nu)$ is given in
\eqref{E:pnu}.

\begin{prp} \label{P:ResHom}
$[\Projm]$ and $[\Repm]$ with $[\Ind]$ as multiplication are
associative unital $\A^\pi$-algebras. $[\Projm]$ and $[\Repm]$ with
$[\Res]$  are coassociative counital $\A^\pi$-coalgebras. Together,
$([\Projm], [\Ind], [\Res])$ is a bialgebra.
\end{prp}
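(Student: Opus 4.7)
The plan is to verify the algebra, coalgebra, and bialgebra axioms separately; the first two are essentially formal consequences of transitivity of induction and restriction, while the third is a direct application of Theorem~\ref{T:Mackey}. The algebra unit and coalgebra counit are both provided by the class $[\mathbf{1}]\in[\Projm(0)]$ via the identification $\Hm(0)\cong\bbk$, which serves simultaneously as the identity element for induction and as the image of the counit on the $\mu=0$ component.

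For the algebra structure, associativity of $[\Ind]$ reduces to the natural isomorphism
$$\Ind_{\mu+\nu,\rho}^{\mu+\nu+\rho}\circ(\Ind_{\mu,\nu}^{\mu+\nu}\otimes\mathrm{id})\cong\Ind_{\mu,\nu+\rho}^{\mu+\nu+\rho}\circ(\mathrm{id}\otimes\Ind_{\nu,\rho}^{\nu+\rho}),$$
both sides being naturally isomorphic to $\Hm(\mu+\nu+\rho)\otimes_{\Hm(\mu)\otimes\Hm(\nu)\otimes\Hm(\rho)}(-)$ by associativity of tensor products over nested subalgebras. Exactness of $\Ind$ on projectives (Corollary~\ref{C:free}) together with the freeness of $1_{\mu,\nu}\Hm(\mu+\nu)$ as a left $\Hm(\mu)\otimes\Hm(\nu)$-module (Proposition~\ref{P:free}) make $[\Ind]$ well-defined and $\A^\pi$-bilinear on Grothendieck groups; the same argument applies to $[\Repm]$ since the PBW basis of Theorem~\ref{T:PBW} shows that induction preserves finite-dimensionality. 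Dually, coassociativity of $[\Res]$ reduces to the idempotent identity $1_{\mu,\nu,\rho}=(1_{\mu,\nu}\otimes 1_\rho)\cdot 1_{\mu+\nu,\rho}=(1_\mu\otimes 1_{\nu,\rho})\cdot 1_{\mu,\nu+\rho}$ inside $\Hm(\mu+\nu+\rho)$, and $[\Res]$ is exact because $1_{\mu,\nu}\Hm(\mu+\nu)$ is free on both sides.

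The core assertion is the bialgebra compatibility: $[\Res]:[\Projm]\to[\Projm]\otimes[\Projm]$ should be an algebra morphism with respect to the twisted product in the statement. Given $P\in\Projm(\mu)$, $Q\in\Projm(\nu)$, the identity $[\Res]([P][Q])=[\Res](P)\cdot[\Res](Q)$ is exactly the content of Theorem~\ref{T:Mackey}: the bimodule filtration of $\Res_{\mu_1,\nu_1}^{\mu+\nu}\Ind_{\mu,\nu}^{\mu+\nu}(P\otimes Q)$ displayed there, passed to the Grothendieck group, becomes a sum over $\lambda\in Q^+$ of tensor products whose parity shift $\pi^{p(\lambda)p(\nu_1+\lambda-\nu_2)}$ and grading shift $q^{-(\lambda,\nu_1+\lambda-\nu_2)}$ are exactly the factors $\pi^{p(|x_2|)p(|y_1|)}q^{-(|x_2|,|y_1|)}$ built into the twisted product on $[\Projm]\otimes[\Projm]$. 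Reindexing the Mackey sum so that $\lambda$ parametrizes the pair $(|x_2|,|y_1|)$ (where $[\Res](P)=\sum x_1\otimes x_2$ and $[\Res](Q)=\sum y_1\otimes y_2$) yields the desired identity.

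The principal obstacle is sign bookkeeping: one must confirm that the super signs produced by Theorem~\ref{T:Mackey}, which arise diagrammatically from Koszul contributions when odd strands cross, agree with the twist $\pi^{p(\mu)p(\nu)}$ hard-coded into the multiplication on $[\Projm]\otimes[\Projm]$ via the convention~\eqref{E:pnu}. Once the sign and grading conventions of Section~\ref{subsec:ModCat} are used consistently throughout the Mackey computation, the matching is automatic; all remaining steps are formal.
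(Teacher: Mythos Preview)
Your proposal is correct and follows essentially the same approach as the paper. The paper's proof is extremely terse---it dispatches the algebra and coalgebra axioms as ``clear from the properties of induction and restriction functors'' and attributes the bialgebra compatibility directly to Theorem~\ref{T:Mackey}---whereas you spell out the transitivity isomorphisms, the idempotent identities, and the sign-matching in the Mackey filtration explicitly; but the underlying logic is identical.
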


\begin{proof}
The first two claims are clear from the properties of induction and
restriction functors. The third one means that $[\Res]$ is an
algebra homomorphism, which follows from Theorem \ref{T:Mackey}.
\end{proof}

Recall from \eqref{E:ProjmBilinForm} the bilinear pairing
$(\cdot,\cdot)$ on $[\Projm(\nu)]$. This extends naturally to a
pairing $(\cdot,\cdot)$ on $[\Projm]$ by letting $[\Projm(\nu)]$ be
orthogonal for different $\nu$.

\begin{prp}\label{P:ProjmBilinForm}
The pairing $(\cdot,\cdot)$ on $[\Projm]$ satisfies $(1,1)=1$, and
\begin{enumerate}
\item[(a)]
 $(P_i,P_j)=\delta_{ij}(1-\pi_iq_i^2)^{-1}$;

\item[(b)]
 $(x,yy')=([\Res](x), y\otimes y')$;

\item[(c)]
$(xx',y)=(x\otimes x',[\Res](y))$;

\item[(d)]
$(x\otimes x', y \otimes y') = (x,y)(x', y'),$ for all $x,x',y,y'
\in[\Projm]$.
\end{enumerate}
\end{prp}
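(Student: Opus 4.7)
The plan is to mirror, on the categorical side, the algebraic proof of Proposition~\ref{prop:bform}, in the same order: establish (d) and (a) as essentially direct computations, then derive (b) and (c) as consequences of the compatibility between $[\Ind]$ and $[\Res]$ under the tensor product of graded $(q,\pi)$-dimensions.

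First, (d) is immediate from \eqref{E:cattensorform} and \eqref{E:ProjFormDfn}. For projective modules $P,P'$ over $\Hm(\mu)$ and $Q,Q'$ over $\Hm(\nu)$, the graded vector space $(P\otimes Q)^\psi\otimes_{\Hm(\mu)\otimes\Hm(\nu)}(P'\otimes Q')$ factors as $(P^\psi\otimes_{\Hm(\mu)} P')\otimes(Q^\psi\otimes_{\Hm(\nu)}Q')$, and the graded super-dimension is multiplicative on tensor products.  For (a), the case $i\neq j$ is trivial because $\af_i\neq\af_j$ places $P_i$ and $P_j$ in orthogonal components. For $i=j$,
$(P_i,P_i)=\dim_q^\pi e(i)\Hm(\af_i)e(i)$; by the PBW basis theorem (Theorem~\ref{T:PBW}) and \S\ref{subsec:rk1}, this is the (skew-)polynomial ring $\bbk[y_1]^{\pm}$ generated by one element of bi-degree $(2s_i,p(i))$, giving $\sum_{k\geq 0}(\pi_i q_i^2)^k = (1-\pi_iq_i^2)^{-1}$.

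For (b), the argument is pure adjunction. With $x=[P]$, $y=[Q]$, $y'=[Q']$, expand
\begin{align*}
(x,yy') &= \dim_q^\pi P^\psi\otimes_{\Hm(\mu+\nu)}\bigl(\Hm(\mu+\nu)1_{\mu,\nu}\otimes_{\Hm(\mu)\otimes\Hm(\nu)}(Q\otimes Q')\bigr)\\
 &= \dim_q^\pi (P^\psi 1_{\mu,\nu})\otimes_{\Hm(\mu)\otimes\Hm(\nu)}(Q\otimes Q').
\end{align*}
Because $\psi$ fixes each $e(\ui)$, we have $\psi(1_{\mu,\nu})=1_{\mu,\nu}$ and hence $P^\psi 1_{\mu,\nu}=(1_{\mu,\nu}P)^\psi = \Res_{\mu,\nu}^{\mu+\nu}(P)^\psi$ as right $\Hm(\mu)\otimes\Hm(\nu)$-modules. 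Applying (d) to the result yields $([\Res]x,\,y\otimes y')$.

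For (c), the plan is the dual manipulation, converting $\Ind$ into a tensor product on the \emph{right}. For any $\Hm(\mu)\otimes\Hm(\nu)$-module $M$, the $\psi$-twist of $\Ind(M)$ is isomorphic as a right $\Hm(\mu+\nu)$-module to $M^\psi\otimes_{\Hm(\mu)\otimes\Hm(\nu)}1_{\mu,\nu}\Hm(\mu+\nu)$, again using that $\psi$ fixes the idempotent $1_{\mu,\nu}$ and is a parity-preserving anti-automorphism. Plugging this in gives
\begin{align*}
(xx',y)&=\dim_q^\pi\bigl((X\otimes X')^\psi\otimes_{\Hm(\mu)\otimes\Hm(\nu)}1_{\mu,\nu}\Hm(\mu+\nu)\bigr)\otimes_{\Hm(\mu+\nu)}Y\\
 &=\dim_q^\pi (X\otimes X')^\psi\otimes_{\Hm(\mu)\otimes\Hm(\nu)}(1_{\mu,\nu}Y)=(x\otimes x',[\Res]y),
\end{align*}
using $1_{\mu,\nu}Y=\Res_{\mu,\nu}^{\mu+\nu}Y$ and (d) in the last step.

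The main obstacle is purely bookkeeping in (c): one must verify that the isomorphism $\Ind(M)^\psi\cong M^\psi\otimes_{\Hm(\mu)\otimes\Hm(\nu)}1_{\mu,\nu}\Hm(\mu+\nu)$ is an honest isomorphism of $(\Z\times\Z_2)$-graded right modules, without picking up extra super signs from re-ordering factors through $\psi$. This comes down to the observation that $\psi$ preserves both gradings and fixes the idempotents $e(\ui)$, so no sign appears beyond those already tracked inside $M^\psi$; once this is checked, no twist factor $\pi^{p(\mu)p(\nu)}q^{-(\mu,\nu)}$ needs to be inserted because the multiplications in (b) and (c) are $[\Ind]$, not the twisted product on $[\Projm]\otimes[\Projm]$.
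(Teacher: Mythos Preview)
Your proof is correct and follows essentially the same approach as the paper's. Both arguments compute (a) as the graded $(q,\pi)$-dimension of $e(i)\Hm(\af_i)e(i)$, derive (b) by collapsing $P^\psi\otimes_{\Hm(\mu+\nu)}\Hm(\mu+\nu)1_{\mu,\nu}$ to $(1_{\mu,\nu}P)^\psi$ using that $\psi$ fixes the idempotent $1_{\mu,\nu}$, obtain (c) by the dual manipulation, and prove (d) via the even factorization $(P\otimes Q)^\psi\otimes_{\Hm(\mu)\otimes\Hm(\nu)}(P'\otimes Q')\cong(P^\psi\otimes_{\Hm(\mu)}P')\otimes(Q^\psi\otimes_{\Hm(\nu)}Q')$; the only difference is the order in which you present the parts and the additional care you take in (c) with the graded super-sign bookkeeping.
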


\begin{proof}
This is exactly as in \cite[Proposition 3.3]{KL1}. Indeed, (a) is
calculated as the graded dimension of
$e(i)\Hm(\af_i)e(j)=\delta_{ij}\Hm(\af_i)$. Property (b) is
calculated as in \cite[Proposition 3.3]{KL1} using now
\eqref{E:cattensorform}: For $X\in\Projm(\mu+\nu)$,
$Y\in\Projm(\mu)$ and $Y'\in\Projm(\nu)$,
\begin{align*}
([X],[Y][Y'])&=([X],[\Ind_{\mu,\nu}^{\mu+\nu}Y\otimes Y'])\\
 &=\dim_q^\pi(X^\psi\otimes_{\Hm(\mu+\nu)}\Hm(\mu+\nu)1_{\mu,\nu}
 \otimes_{\Hm(\mu)\otimes\Hm(\nu)}Y\otimes Y')\\
 &=\dim_q^\pi(X^\psi1_{\mu,\nu}\otimes_{\Hm(\mu)\otimes\Hm(\nu)}Y\otimes Y')\\
 &=\dim_q^\pi((1_{\mu,\nu}X)^\psi\otimes_{\Hm(\mu)\otimes\Hm(\nu)}Y\otimes Y')\\
 &=([\Res^{\mu+\nu}_{\mu,\nu}X],[Y\otimes Y'])
 =([\Res^{\mu+\nu}_{\mu,\nu}X],[Y]\otimes [Y']).
\end{align*}
A nearly identical calculation gives (c). Finally, to prove (d), it
is enough to observe that there is an even isomorphism of
$(\Z\times\Z_2)$-graded vector spaces
$$
(P\otimes Q)^\psi\otimes_{\Hm(\mu)\otimes\Hm(\nu)}(P'\otimes Q')
\cong(P^\psi\otimes_{\Hm(\mu)}P')\otimes(Q^\psi\otimes_{\Hm(\nu)}Q')
$$
for any $P,P'\in\Projm(\mu)$ and $Q,Q'\in\Projm(\nu)$.
\end{proof}

\begin{example}
We compute $([P_{ii}],[P_{ii}])$ in two ways. Note that $P_{ii}
=\Hm(2\af_i)$. First, by definition and using \eqref{E:dimqpNH}, we
have
$$
([P_{ii}],[P_{ii}])=\dim_q^\pi\Hm(2\af_i)
=\frac{\pi_iq_i^{-1}[2]_i}{(1-\pi_iq_i^2)^2}.
$$
On the other hand, using Propositions \ref{P:Restriction} and
\ref{P:ProjmBilinForm}(b), we deduce that
\begin{align*}
([P_{ii}],[P_{ii}])
    &=([\Res^{2\af_i}_{\af_i,\af_i}P_{ii}], [P_i]\otimes[P_i])\\
    &=((1+\pi^{p(\tau_1)}q^{\deg(\tau_1)})[P_i]\otimes [P_i],[P_i]\otimes[P_i])\\
    &=(1+\pi_iq_i^{-2})([P_i],[P_i])^2.
\end{align*}
By Proposition \ref{P:ProjmBilinForm}(a), the two answers agree,
since $\pi_iq_i^{-1}[2]_i=1+\pi_iq^{-2}$.
\end{example}

\subsection{The homomorphism $\gm^\pi$}

Recall the algebra $\fap$  from Section~\ref{S:QKM}. The following
is a super analogue of \cite[Proposition 3.4]{KL1}.

\begin{prp}\label{P:InjGmHom}
There exists an injective $\A^\pi$-homomorphism
$\gm^\pi:\fap\to[\Projm]$ defined by
$$\gm^\pi(\te_{\ui}^{(\uk)})=P_{\ui^{(\uk)}},
$$
for each
$\te_{\ui}^{(\uk)}:=\te_{i_1}^{(k_1)}\cdots\te_{i_h}^{k_h}\in\fap$.
The following properties hold under $\gm^\pi$:
\begin{enumerate}
%
\item[(a)]
The comultiplication map
$(\fap)_{\mu+\nu}\to(\fap)_\mu\otimes(\fap)_\nu$ corresponds to the
exact functor $\Res^{\mu+\nu}_{\mu,\nu}$.

\item[(b)]
The bar involution on $\fap$ corresponds to the duality functor
$\#$.

\item[(c)]
The bilinear form $(\cdot,\cdot)$ on $\fap$ corresponds to the
bilinear form $(\cdot,\cdot)$ on $[\Projm]$, i.e.,
$(x,y)=(\gm^\pi(x),\gm^\pi(y))$, for all $x,y \in \fap.$
\end{enumerate}
\end{prp}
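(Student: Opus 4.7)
The plan is to adapt the proof of \cite[Proposition~3.4]{KL1} to the present covering setting, keeping track of the parameter $\pi$ at every step in parallel with the $q$-grading. First, I would define an $\A^\pi$-algebra homomorphism $\ffpr^\pi\to[\Projm]$ by $\theta_i\mapsto[P_i]$; this is well-defined because the multiplication on $[\Projm]$ introduced in Section~\ref{subsec:IndRes}, namely $[P][Q]=\pi^{p(\mu)p(\nu)}q^{-(\mu,\nu)}[\Ind_{\mu,\nu}^{\mu+\nu}(P\otimes Q)]$, carries exactly the twist of \eqref{eq:prod}. The quantum Serre relations \eqref{eq:serretheta} hold in the image by Theorem~\ref{T:CategoricalSerre}: the alternating sum of Grothendieck classes with signs $(-1)^{a'}\pi^{p(a';i,j)}$ corresponds to the alternating sum of terms in the given split exact sequence, and therefore vanishes. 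Combined with the categorical divided-powers identity $\Hm(k\alpha_i)\cong\bigoplus_{[k]_i!}P_{i^{(k)}}$ from \eqref{E:Projective}, this produces the homomorphism $\gamma^\pi:\fap\to[\Projm]$ of the statement.

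For property (a), I would apply Proposition~\ref{P:Restriction} inductively: restriction of $P_{\ui^{(\uk)}}$ along any refinement is a sum over shuffles, each contributing $\pi^{p(\tau_we(\ui\uj))}q^{\deg\tau_we(\ui\uj)}[P_{\ui'}\otimes P_{\uj'}]$. Grouping terms by the rank-one case and invoking Lemma~\ref{lem:rtheta} matches these contributions with the coefficients $(\pi_iq_i)^{-tt'}$ in the coproduct, once the gradings \eqref{E:ZGrading}--\eqref{E:Z2Grading} are compared against the parity/shift convention of \eqref{E:Projective module}. For (b), on generators $\overline{[P_i]}=[P_i^{\#}]=[P_i]$ by \eqref{E:Pound}, while the rule $\bar q=\pi q^{-1}$ categorifies to \eqref{eq:barq}; the images $[P_{i^{(k)}}]$ of divided powers are bar-invariant because the parity/grading shift $\Pi^{p(i){k\choose 2}}\{-{k\choose 2}\}$ in \eqref{E:Projective} has been chosen to be self-dual under $\#$, matching the bar-invariance of $\theta_i^{(k)}$ in $\fap$ from Section~\ref{SS:barinv}. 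For (c), I would argue by induction on the height of $\nu$: the base case $(P_i,P_j)=\delta_{ij}(1-\pi_iq_i^2)^{-1}$ is Proposition~\ref{P:ProjmBilinForm}(a), matching Proposition~\ref{prop:bform}(a) exactly, and the four axioms (a)--(d) of the two propositions are formally identical, so the forms agree on all of $\fap\times\fap$ via $\gamma^\pi$.

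Injectivity then follows from non-degeneracy of $(\cdot,\cdot)$ on $\fap$ (established just after Proposition~\ref{prop:bform}): if $\gamma^\pi(x)=0$ then $(x,y)=(\gamma^\pi(x),\gamma^\pi(y))=0$ for every $y\in\fap$, whence $x=0$. The main obstacle I anticipate is the super-sign bookkeeping in the verification of (a): each shuffle contribution in Proposition~\ref{P:Restriction} carries both a $q$-shift and a parity shift $\Pi^{p(\tau_we(\ui\uj))}$, and these two shifts must combine to give exactly $(\pi_iq_i)^{-tt'}$ from Lemma~\ref{lem:rtheta}. Once the rank-one match is pinned down using the spin nilHecke calculations of the preceding section (in particular \eqref{E:dimqpNH} and Lemmas~\ref{L:idempotent identities}, \ref{L:IdemIdentities}), the extension to arbitrary $(\ui,\uk)$ follows the KL1 template verbatim.
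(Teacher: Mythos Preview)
Your approach is essentially the paper's, but there is one misstatement and one place where the paper is more efficient. The multiplication on $[\Projm]$ is simply $[P][Q]=[\Ind_{\mu,\nu}^{\mu+\nu}(P\otimes Q)]$, with \emph{no} twist; the formula $\pi^{p(\mu)p(\nu)}q^{-(\mu,\nu)}$ you quote is the twist on $[\Projm]\otimes[\Projm]$, and \eqref{eq:prod} is likewise the twist on $\ffpr^\pi\otimes\ffpr^\pi$, not on $\ffpr^\pi$ itself. This does not break your argument (the free-algebra map $\theta_i\mapsto[P_i]$ is a homomorphism for the untwisted products on both sides), but you should correct the sentence.

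For (a), the paper avoids your inductive shuffle computation entirely: since Proposition~\ref{P:ResHom} shows $[\Res]$ is an algebra homomorphism $[\Projm]\to[\Projm]\otimes[\Projm]$ for the twisted product, and $r$ is an algebra homomorphism $\fqp\to\fqp\otimes\fqp$ for the same twist \eqref{eq:prod}, compatibility need only be checked on the generators $\theta_i\leftrightarrow[P_i]$, which is immediate. Your route via Proposition~\ref{P:Restriction} and Lemma~\ref{lem:rtheta} would also work, but the sign bookkeeping you flag as an obstacle is bypassed by the paper's argument. The remaining points---descent via Theorems~\ref{th:serre} and \ref{T:CategoricalSerre}, (c) from the identical axiomatics of Propositions~\ref{prop:bform} and \ref{P:ProjmBilinForm}, (b) from \eqref{E:Pound} and \eqref{eq:barq}, and injectivity from non-degeneracy---match the paper exactly.
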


\begin{proof}
We start with an $\Q(q)^\pi$-homomorphism $\gm^\pi_\Q$ from the free
algebra $\ffpr$ to $[\Projm]$ which sends each $\theta_i$ to
$P_{\ui}$. By Theorem~\ref{th:serre} on quantum Serre and
Theorems~\ref{T:CategoricalSerre} on categorical Serre, $\gm^\pi_\Q$
descends to a homomorphism $\gm^\pi:\fap\to[\Projm]$ as defined in
the theorem. Property~(c) on the compatibility of bilinear forms
follows from Proposition~\ref{prop:bform} and
Proposition~\ref{P:ProjmBilinForm}. The injectivity of $\gm^\pi$
follows from the non-degeneracy of $(\cdot,\cdot)$ on $\fap$.
Property~(a) follows from Proposition \ref{P:ResHom} and then
checking that the homomorphisms $r$ and $[\Res]$ are compatible on
the generators $\theta_i$ and $[P_i]$ under $\gm^\pi$. The
bar-involution on $[\Projm]$ fixes each $P_i$ and satisfies
\eqref{eq:barq}, and (b) follows.
\end{proof}

Let $[\Projm(\nu)]_{\pi=1}$ (resp. $[\Projm(\nu)]_{\pi=-1}$) be the
quotient  of $[\Projm(\nu)]$ by the relations
$$
[M]=[\Pi M], \qquad \text{(respectively, } [M]=-[\Pi M]),
$$
for every $M\in\Projm(\nu)$. Recall the algebra $\fa$ and $\fam$
from Section~\ref{S:QKM}.

\begin{cor}\label{C:InjGmHom} There exist injective homomorphisms
$\gm^+:\fa\to[\Projm]_{\pi=1}$ and $\gm^-:\fam\to[\Projm]_{\pi=-1}$
satisfying the analogous properties of Proposition \ref{P:InjGmHom}.
\end{cor}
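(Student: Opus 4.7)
The plan is to obtain $\gamma^+$ and $\gamma^-$ by specializing the homomorphism $\gamma^\pi$ of Proposition \ref{P:InjGmHom} at $\pi = 1$ and $\pi = -1$, respectively. Since $\gamma^\pi$ is $\A^\pi$-linear and $\A^\pi/(\pi \mp 1) \cong \A$, base change along these quotient maps yields $\A$-linear homomorphisms. On the target side, $[\Projm] \otimes_{\A^\pi} \A$ agrees with $[\Projm]_{\pi = \pm 1}$ by the definition of these quotient categories. On the source side, the quantum integers $[k]_i$ from \eqref{eq:pi} specialize to $[k]_i^+$ from \eqref{eq:pi=1} at $\pi = 1$ and to $[k]_i^-$ from \eqref{eq:pi=-1} at $\pi = -1$, while the quantum Serre relations \eqref{eq:serretheta} specialize to \eqref{eq:serre+} and \eqref{eq:superserre} respectively; combined with Theorem \ref{th:KMpres}, this identifies $\fap$ specialized at $\pi = 1$ (resp. $\pi = -1$) with $\fa$ (resp. $\fam$), and the induced maps on divided powers read $\theta_i^{(k)} \mapsto [P_{i^{(k)}}]$ as desired.

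I would then check that properties (a)--(c) of Proposition \ref{P:InjGmHom} pass through the specialization. The comultiplication $r$ on $\fap$ uses the twist $\pi^{p(x_2)p(x_1')}q^{-(|x_2|,|x_1'|)}$ on $\fap \otimes \fap$, which becomes $(\pm 1)^{p(x_2)p(x_1')}q^{-(|x_2|,|x_1'|)}$ after specialization, exactly the twists defining the standard (super)comultiplications on $\fa$ and $\fam$; since $\Res$ is independent of $\pi$, property (a) transfers. The bar involution $\bar{q} = \pi q^{-1}$ on $\fap$ becomes $\bar{q} = q^{-1}$ at $\pi = 1$ (classical bar on $\fa$) and $\bar{q} = -q^{-1}$ at $\pi = -1$ (the novel bar on $\fam$ from \S\ref{subsec:superKM}), matching the descent of the duality $\#$ to $[\Projm]_{\pi = \pm 1}$ and giving (b). Property (c) follows since the $\Q(q)^\pi$-valued forms on $\fap$ and $[\Projm]$ are $\A^\pi$-linear and hence specialize compatibly to $\Q(q)$-valued forms.

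The main obstacle will be verifying injectivity of $\gamma^+$ and $\gamma^-$, since non-degeneracy of the bilinear form on $\fap$ over $\A^\pi$ does not automatically survive specialization. For $\gamma^+$, this reduces to Lusztig's classical non-degeneracy result \cite[Proposition 1.2.3]{Lu}. For $\gamma^-$, I would rerun the proof of Proposition \ref{prop:bform} with $\pi = -1$ held fixed throughout; the argument goes through formally, using that $(\theta_i,\theta_i) = (1-\pi_i q_i^2)^{-1}$ remains a nonzero element of $\Q(q)$ after specialization (equalling $(1-q_i^2)^{-1}$ for $i \in I_\zero$ and $(1+q_i^2)^{-1}$ for $i \in I_\one$), and the multiplicativity \eqref{eq:tensorform} propagates non-degeneracy from generators to all weight spaces by induction on height. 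Once non-degeneracy of the specialized forms is established, the injectivity of $\gamma^\pm$ is immediate: if $\gamma^\pm(x) = 0$, then property (c) gives $(x,y) = 0$ for all $y$, forcing $x = 0$.
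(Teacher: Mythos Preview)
Your overall strategy---specialize $\gamma^\pi$ at $\pi=\pm 1$ and identify the source via Theorem~\ref{th:KMpres}---is exactly what the paper intends (the corollary is stated without proof there), and your verification that properties (a)--(c) descend is correct.

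However, your injectivity argument for $\gamma^-$ contains a genuine error. You claim that ``the multiplicativity \eqref{eq:tensorform} propagates non-degeneracy from generators to all weight spaces by induction on height.'' This is false: the bilinear form on the \emph{free} algebra $\ffpr$ (with $\pi=-1$) already satisfies all the multiplicativity properties of Proposition~\ref{prop:bform} and is non-degenerate in degree one, yet it is degenerate in higher weight spaces---indeed, Theorem~\ref{th:serre} exhibits the Serre elements as nonzero elements of its radical. Rerunning Proposition~\ref{prop:bform} only reconstructs the form; it says nothing about its radical.

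The correct input is precisely Theorem~\ref{th:KMpres}(a), which you already invoked to identify the source. That theorem (see the remark following it) asserts that $\fqm$, defined by Serre relations, coincides with the quotient of the free algebra by the radical of the specialized form; hence the form on $\fqm$ is non-degenerate by construction, and your final step ($(x,y)=0$ for all $y$ forces $x=0$) then goes through. Alternatively, and perhaps more directly: since $2$ is invertible in $\Q(q)$, the ring $\Q(q)^\pi$ splits as $\Q(q)\times\Q(q)$ via the idempotents $\tfrac{1\pm\pi}{2}$, so the injective map $\gamma^\pi_{\Q(q)}$ decomposes as a direct sum $\gamma^+_{\Q(q)}\oplus\gamma^-_{\Q(q)}$, each summand therefore injective; injectivity over $\A$ then follows since $\fam\subset\fqm$ is torsion-free.
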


\begin{thm}\label{T:Gm+iso}
The map $\gm^+:\fa\to[\Projm]_{\pi=1}$ is an isomorphism of
bialgebras.
\end{thm}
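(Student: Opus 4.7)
The map $\gm^+$ is an injective $\mathcal{A}$-algebra homomorphism by Corollary~\ref{C:InjGmHom}, and it intertwines the comultiplication $r$ on $\fa$ with $[\Res]$ on $[\Projm]_{\pi=1}$ by specializing Proposition~\ref{P:InjGmHom}(a) at $\pi=1$; in particular $\gm^+$ is already a bialgebra homomorphism. The remaining task is to establish surjectivity, after which the statement of the theorem follows by combining the three properties.

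The plan for surjectivity is to show that the classes $\{[P_\ui] : \ui \in I^\nu\}$, each of which lies in the image of $\gm^+$ since $\gm^+(\te_\ui)=[P_\ui]$, generate $[\Projm(\nu)]_{\pi=1}$ as an $\mathcal{A}$-module. The starting point is the left-module decomposition $\Hm(\nu) = \bigoplus_{\ui \in I^\nu} P_\ui$, which realizes every indecomposable projective $P_L$ as a direct summand of some $P_\ui$, up to grading shift (the parity shift becoming redundant after $\pi=1$). The task then reduces to inverting, over $\mathcal{A}$, the Cartan/decomposition matrix relating the $[P_\ui]$'s to the basis $\{[P_L]\}$ of indecomposable projectives.

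To establish this invertibility, I would follow the strategy of Khovanov--Lauda~\cite{KL1} adapted to the spin setting. One introduces a lexicographic order on $I^\nu$ together with a matching order on isomorphism classes of simples, by assigning to each simple $L$ its lex-minimal word $\ui_L$ (the smallest $\ui$ with $e(\ui)L\neq 0$), and then uses the Mackey filtration of Theorem~\ref{T:Mackey} applied iteratively to produce an expansion of the form
\[
[P_\ui] = [P_{L_\ui}] + (\text{lower-order terms with coefficients in } \mathcal{A})
\]
in $[\Projm(\nu)]_{\pi=1}$, where $L_\ui$ is the simple with minimal word $\ui$. This unitriangular shape makes the transition matrix invertible over $\mathcal{A}$, so $\{[P_\ui]\}$ spans, yielding surjectivity.

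The main obstacle is carefully tracking the super signs through the iterated Mackey restriction of Theorem~\ref{T:Mackey} and through the computation of the graded characters $\dim_q^\pi e(\uj)L$ that define the minimal-word invariant in the spin setting. However, since we are specializing at $\pi=1$, spin signs either cancel in pairs or collapse into ordinary signs, and the classical KLR analysis of Khovanov--Lauda carries over with appropriate bookkeeping (aided by the fact that the divided-power projectives $P_{\ui^{(\uk)}}$ and categorical Serre relations of Theorem~\ref{T:CategoricalSerre} are available to recognize when $[P_\ui]$ contains a divided-power factor $[P_{i^{(k)}}]$). A subtlety worth verifying is that the minimal word is well-defined at the $\pi=1$ level; this follows from the fact that $e(\ui)L\neq 0$ is a parity-independent condition.
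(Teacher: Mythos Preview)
Your proposal is correct and follows essentially the same route as the paper, which simply cites the surjectivity argument of \cite[\S3.2]{KL1} and \cite[Chapter~5]{Kl} and notes that at $\pi=1$ nothing obstructs it. Two minor refinements: the unitriangularity should be indexed by the words $\ui_L$ that actually occur as lex-minimal words of simples (the map $L\mapsto\ui_L$ is injective, not a bijection onto $I^\nu$), and the key technical input is the linear independence of the characters $\ch_q^+L$ of simples---which works at $\pi=1$ precisely because graded dimensions are nonnegative and hence never cancel---rather than the Mackey theorem directly; Mackey enters only through the shuffle-lemma computation of characters of induced modules used in that independence proof.
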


\begin{proof}
Recall $\fa$ is the usual half the quantum Kac-Moody algebra. The
surjectivity of $\gm^+$ can be established following \cite[Chapter
5]{Kl} exactly as was done in \cite[$\S$3.2]{KL1}.\end{proof}

\subsection{The isomorphisms $\gamma^-$ and $\gamma^\pi$}

The arguments in \cite[Chapter 5]{Kl} are insufficient to derive the
surjectivity of $\gm^-$ directly. In particular, the proof of the
independence of characters \cite[Theorem 5.3.1]{Kl} fails since we
have $\ch_q^- L=0$ if $\ch_q^\pi L=\ch_q^\pi \Pi L$ (see
\eqref{E:FormalChar} and \eqref{E:CharSpecialization}). We now give
an alternative argument based on representation theory of quantum
Kac-Moody superalgebras to show this never happens.

Denote by $\Qch M =\sum_\mu \dim M_\mu e^\mu$ the (formal) character
of an algebra or an module $M=\oplus_{\mu} M_\mu$ which is graded by
$Q^+$ and free over the base ring $\A$ or $\Q(q)$. We have a natural
partial order $\geq$ on the collection of characters: for characters
$g, h$, we have $g \ge h$ if and only if $g -h$ is a nonnegative
integer linear combination of $e^\mu$ for $\mu \in Q^+$.

 \begin{lem}  \label{lem:sameCh}
 We have $\Qch \fa =\Qch \fam$.
 \end{lem}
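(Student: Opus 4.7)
The plan is to compare $\Qch\,\fa$ and $\Qch\,\fam$ via PBW-type product formulas indexed by positive roots, and then equate the root multiplicities of $\g(A)$ and $\g(\und A)$ through a denominator identity deduced from the super Weyl--Kac character formula of \cite{BKM}.

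The first step is to invoke the (quantum) PBW theorem for both $\fq$ and $\fqm$. Under assumption (C6), every odd simple root $\alpha_i$ of $A$ satisfies $(\alpha_i,\alpha_i)=2s_i \ne 0$ (non-isotropy), so the corresponding odd root vectors do not square to zero and admit unrestricted nonnegative integer exponents in the PBW basis. A short root-string argument shows further that no positive root of $\g(A)$ is isotropic, so the PBW basis takes the same shape as in the non-super case. Passing to $Q^+$-characters (which are insensitive to $q$ and to the choice of $\A$-form), this yields
\[
\Qch\,\fa = \prod_{\alpha \in \Delta^+(\und A)}(1-e^\alpha)^{-\operatorname{mult}_{\und \g}\alpha},
\qquad
\Qch\,\fam = \prod_{\alpha \in \Delta^+(A)}(1-e^\alpha)^{-\operatorname{mult}_{\g}\alpha}.
\]

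The second step is to specialize the super Weyl--Kac character formula to the trivial integrable module, obtaining the denominator identity
\[
\prod_{\alpha \in \Delta^+(A)}(1-e^{-\alpha})^{\operatorname{mult}_{\g}\alpha} = \sum_{w\in W}(-1)^{\ell(w)} e^{w\rho-\rho},
\]
where $W$ is the Weyl group and $\rho$ the half-sum of positive roots; each of these depends only on the GCM and not on the parity assignment. The right-hand side therefore coincides formally with the classical Weyl--Kac denominator identity for $\g(\und A)$. Equating the two left-hand sides and invoking uniqueness of such infinite product expansions forces $\operatorname{mult}_{\g(A)}\alpha = \operatorname{mult}_{\g(\und A)}\alpha$ for every $\alpha\in Q^+$, which gives the desired equality of characters.

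The main obstacle is the super Weyl--Kac character formula under (C6) itself; however this is precisely the input from \cite{BKM} that the authors have already cited in the paragraph preceding Theorem~\ref{th:KMpres}, so in this paper it may be invoked directly. The only other delicate point is ruling out isotropic positive roots in $\Delta^+(A)$, which is expected to follow from an induction on root height exploiting the non-isotropy of all odd simple roots granted by (C6).
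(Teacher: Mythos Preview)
Your second step is essentially the paper's argument and is the right idea: both $\Qch\fa$ and $\Qch\fam$ equal $\big(\sum_{w\in W}\sgn(w)e^{w\rho-\rho}\big)^{-1}$ because $W$ and $\rho$ depend only on the underlying GCM, and the super Weyl--Kac denominator formula (via \cite{Kac} and \cite{BKM}) holds under (C6). The paper simply stops there, without passing through root multiplicities.

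Your first step, however, contains a real error. The claim that non-isotropic odd root vectors ``admit unrestricted nonnegative integer exponents in the PBW basis'' is false: in the PBW theorem for a Lie superalgebra (or its quantization), odd basis vectors appear with exponent $0$ or $1$ regardless of whether they square to zero. What is true is that $e_\alpha^2$ is a nonzero multiple of $e_{2\alpha}$, so higher powers of $e_\alpha$ survive in $U(\mathfrak{n}^+)$ but are not PBW monomials---they are absorbed into the even root vector $e_{2\alpha}$. Consequently the correct product formula for $\Qch\fam$ has factors $(1+e^\alpha)^{\text{mult}}$ for odd $\alpha$, not $(1-e^\alpha)^{-\text{mult}}$. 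Already for $\osp(1|2)$ your formula would give $(1-e^\alpha)^{-1}(1-e^{2\alpha})^{-1}$, whereas the correct character is $(1-e^{2\alpha})^{-1}(1+e^\alpha)=(1-e^\alpha)^{-1}$.

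This also invalidates your multiplicity-matching conclusion: $\operatorname{mult}_{\g(A)}\alpha$ and $\operatorname{mult}_{\g(\und A)}\alpha$ are \emph{not} equal in general (for $\osp(1|2)$ versus $\mathfrak{sl}_2$, the root $2\alpha$ has multiplicity $1$ and $0$ respectively). The equality $\Qch\fa=\Qch\fam$ comes from a cancellation in the super product, not from a termwise match. Drop the PBW detour and argue directly with the sum form of the denominator, as the paper does.
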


\begin{proof}
By \cite{Kac}, the super Weyl-Kac character formula for integrable
modules holds and therefore so does the super Weyl-Kac denominator
formula. According to \cite{BKM}, the integrable modules of quantum
Kac-Moody superalgebras have the same characters as for their
classical counterpart at $q\mapsto 1$ (generalizing Lusztig's work
for quantum Kac-Moody \cite{Lu1}). Hence the super Weyl-Kac
denominator formula holds, and so
$$
\Qch \fam = \Big(\sum_{w\in W} \text{sgn} (w) e^{w\rho -\rho}
\Big)^{-1},
$$
where $\rho$ and the Weyl group $W$ for the Kac-Moody superalgebra
associated to the GCM $A$ \cite{Kac}  coincide with the counterparts
for the usual Kac-Moody algebra associated to the GCM $A^+$.  On the
other hand, it is well known that $\Qch \fa$ is given by exactly
the same formula (by a combination of Weyl-Kac denominator formula
and Lusztig's result for quantum Kac-Moody). Hence the lemma
follows.
\end{proof}

Recall a finite-dimensional simple module $S$ of an associative
superalgebra $A$ is of type $\texttt M$ if it remains to be simple
with the $\Z_2$-grading forgotten, and is of type $\texttt Q$
otherwise. Recall the parity-shift functor $\Pi$. It is known (cf.
for example \cite[Chapter~12]{Kl}) that a simple $A$-module $S$ is
of type $\texttt Q$ if and only if there exists an even isomorphism
of $A$-modules: $\Pi S \cong S$.

\begin{prop}  \label{equa4chl}
We have
$$
\Qch\fam =  \Qch [\Projm]_{\pi=-1} = \Qch [\Projm]_{\pi=1}
= \Qch \fa.
$$
\end{prop}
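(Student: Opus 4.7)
The plan is to sandwich all four characters between obvious bounds, namely to prove the chain
\[
\Qch\fa \;=\; \Qch\fam \;\leq\; \Qch[\Projm]_{\pi=-1} \;\leq\; \Qch[\Projm]_{\pi=1} \;=\; \Qch\fa,
\]
which forces every inequality to become an equality. Of the four steps, three are immediate from material already established: the outer equality is Lemma~\ref{lem:sameCh}, the right-hand equality follows from the bialgebra isomorphism $\gamma^+\colon\fa\xrightarrow{\sim}[\Projm]_{\pi=1}$ of Theorem~\ref{T:Gm+iso}, and the first inequality is the weight-by-weight dimension comparison coming from the injection $\gamma^-\colon\fam\hookrightarrow[\Projm]_{\pi=-1}$ of Corollary~\ref{C:InjGmHom}.

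The substantive step is therefore the middle inequality $\Qch[\Projm]_{\pi=-1}\leq\Qch[\Projm]_{\pi=1}$, which I would derive from an analysis of the $\A^\pi$-module structure of $[\Projm(\nu)]$. By Proposition~\ref{P:FinManySimples} there are finitely many simples at weight $\nu$ up to grading and parity shift, each giving rise to an indecomposable projective cover $P_L$; moreover a grading-degree argument (using the degree lower bound on $\Hm(\nu)$ noted in $\S$\ref{subsec:ModCat}) rules out any nontrivial $\Z$-shift $P_L\cong P_L\{a\}$, so the only possible self-shift is $P_L\cong\Pi P_L$. Call $L$ of type $\texttt{M}$ if no such self-isomorphism exists and of type $\texttt{Q}$ otherwise. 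Then $[P_L]$ generates a free summand $\A^\pi$ in the type $\texttt{M}$ case, and a summand $\A^\pi/(1-\pi)\cong\A$ in the type $\texttt{Q}$ case. Specializing, a type $\texttt{M}$ summand becomes a free rank-one $\A$-module at both $\pi=\pm1$, whereas a type $\texttt{Q}$ summand specializes to $\A$ at $\pi=1$ but to $\A^\pi/(1-\pi,\,1+\pi)\cong\A/2\A$ at $\pi=-1$, i.e.\ to pure $2$-torsion. Taking graded $\A$-ranks modulo torsion (equivalently, dimensions over $\Q(q)$) therefore gives the desired inequality, with the deficit at each weight equal to the number of type $\texttt{Q}$ classes there.

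Chaining these four bounds forces equality throughout, proving the proposition. The main obstacle is the structural classification in the previous paragraph: one must rule out exotic self-shifts of indecomposable projectives so that the type $\texttt{M}$/$\texttt{Q}$ dichotomy is exhaustive, and justify Krull--Schmidt-style uniqueness of $P_L$ in the $(\Z\times\Z_2)$-graded setting. Everything else is bookkeeping with the already-established bilinear-form and induction/restriction machinery of $\S$\ref{sec:Representations}. As a byproduct, equality in the middle step will force the number of type $\texttt{Q}$ simples at each weight to vanish, thereby recovering the conjecture of \cite{KKT} that every simple $\Hm(\nu)$-module is of type $\texttt{M}$.
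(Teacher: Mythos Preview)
Your proposal is correct and follows essentially the same sandwich argument as the paper: the chain $\Qch\fam \leq \Qch[\Projm]_{\pi=-1} \leq \Qch[\Projm]_{\pi=1} = \Qch\fa$ together with Lemma~\ref{lem:sameCh} forces all equalities, with the middle inequality coming from the type $\texttt{M}$/$\texttt{Q}$ dichotomy for simples. Your treatment of that middle step is in fact more careful than the paper's---you make explicit the $\A^\pi$-module analysis and the torsion that appears at $\pi=-1$, whereas the paper simply asserts that $[\Projm]_{\pi=-1}$ has a basis indexed by the type $\texttt{M}$ simples.
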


\begin{proof}
Using the Cartan pairing \eqref{E:Cartanpairing}, $[\Projm]_{\pi=1}$
has a basis of projective indecomposable modules $P_L$ labeled by
all simple modules $L$ of the spin quiver Hecke algebra (of both
type $\texttt M$ and type $\texttt Q$). On the other hand, since a
type $\texttt Q$ simple module $L$ always admits an even isomorphism
$\Pi L \cong L$,  therefore $[\Projm]_{\pi=-1}$ has a basis labeled
by the type $\texttt M$ simple modules. Hence, we have $\Qch
[\Projm]_{\pi=1} \ge \Qch [\Projm]_{\pi=-1}$. By Theorem~
\ref{T:Gm+iso}, the map $\gm^+$ is an isomorphism. Combining this
with the injection $\gamma^-$ gives us a sequence of inequalities of
formal characters:
$$
\Qch \fam \leq \Qch [\Projm]_{\pi=-1} \leq \Qch
[\Projm]_{\pi=1} = \Qch \fa.
$$
All inequalities much be equalities by Lemma~\ref{lem:sameCh}.
\end{proof}

The equality $\Qch \fam = \Qch [\Projm]_{\pi=-1}$ above
implies the following.
\begin{cor}  \label{cor:iso-}
The map  $\gamma^-_{\Q(q)}: \fqm \rightarrow
{\Q(q)}\otimes_{\A}[\Projm]_{\pi=-1}$ is an isomorphism of $\Z
\times \Z_2$-graded algebras.
\end{cor}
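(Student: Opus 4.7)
The plan is to deduce this corollary as a direct formal consequence of the work already done, with Proposition~\ref{equa4chl} doing essentially all the heavy lifting. From Corollary~\ref{C:InjGmHom} we already have an injective, weight-preserving, $\Z_2$-graded $\A^\pi$-algebra homomorphism $\gamma^-: \fam \to [\Projm]_{\pi=-1}$. Tensoring with $\Q(q)$ over $\A$ produces a $\Q(q)$-algebra homomorphism $\gamma^-_{\Q(q)}: \fqm \to \Q(q)\otimes_{\A}[\Projm]_{\pi=-1}$ which retains both gradings, and the task is to show it is bijective.

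First, I would verify that injectivity is preserved under base change. The algebra $\fam$ is torsion-free over $\A$ (being an $\A$-form of $\fqm$), and each weight space $([\Projm]_{\pi=-1})_\nu$ is a free $\A$-module with basis given by classes of projective indecomposables (after identifying $[M] = -[\Pi M]$, the type $\mathtt{Q}$ indecomposables become zero while the type $\mathtt{M}$ ones survive, consistent with the basis noted in the proof of Proposition~\ref{equa4chl}). Consequently the kernel of $\gamma^-_{\Q(q)}$ is obtained from the kernel of $\gamma^-$ by extension of scalars, and the latter is zero.

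Next I would invoke Proposition~\ref{equa4chl}, which gives the key equality
\[
\Qch \fam = \Qch [\Projm]_{\pi=-1}.
\]
This means that in each weight $\nu\in Q^+$ the two $\A$-modules $(\fam)_\nu$ and $([\Projm]_{\pi=-1})_\nu$ have the same (finite) rank. After tensoring with $\Q(q)$, the corresponding weight spaces become $\Q(q)$-vector spaces of equal finite dimension, and an injective $\Q(q)$-linear map between such is automatically surjective. Since $\gamma^-_{\Q(q)}$ is weight-preserving, summing over $\nu\in Q^+$ yields surjectivity globally, and the fact that $\gamma^-$ already respects the $\Z_2$-grading shows $\gamma^-_{\Q(q)}$ is an isomorphism of $Q\times\Z_2$-graded $\Q(q)$-algebras.

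The only substantive obstacle is the character equality in Proposition~\ref{equa4chl}, which rests on Lemma~\ref{lem:sameCh} (super Weyl--Kac denominator formula plus the result of \cite{BKM} that integrable modules over the quantum Kac--Moody superalgebra have the same characters as in the classical limit); once this is in hand, the corollary is formal and requires no further representation-theoretic input.
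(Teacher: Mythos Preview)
Your proposal is correct and follows exactly the same route as the paper: the paper's entire proof is the single sentence ``The equality $\Qch \fam = \Qch [\Projm]_{\pi=-1}$ above implies the following,'' and you have simply spelled out why that equality, together with the injectivity of $\gamma^-$ from Corollary~\ref{C:InjGmHom} and flatness of $\Q(q)$ over $\A$, forces $\gamma^-_{\Q(q)}$ to be an isomorphism weight-by-weight. Your remarks on freeness of the weight spaces and the role of type~$\texttt{M}$ versus type~$\texttt{Q}$ simples match the discussion in the proof of Proposition~\ref{equa4chl}.
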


Arguing as in \cite[Proposition 3.20]{KL1} and using
Corollary~\ref{cor:iso-}, we now deduce the part (a) of the
following theorem. Part (b) then follows from (a),
Proposition~\ref{P:InjGmHom} and Theorem~\ref{T:Gm+iso}.

\begin{thm}
\begin{enumerate}
\item[(a)]
The map $\gm^-:\fam\to[\Projm]_{\pi=-1}$ is an isomorphism of
$\Z\times\Z_2$-graded $\A$-algebras.

\item[(b)]
The map $\gm^\pi:\fap\to[\Projm]$ is an isomorphism of
$\Z\times\Z_2$-graded $\A^\pi$-algebras.
\end{enumerate}
\end{thm}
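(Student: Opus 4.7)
The plan splits naturally along the two parts of the theorem.

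\medskip

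\noindent\textbf{Part (a).} The starting point is Corollary~\ref{cor:iso-}, which states that $\gm^-_{\Q(q)} := \Q(q)\otimes_{\A}\gm^-$ is an isomorphism of $\Z\times\Z_2$-graded $\Q(q)$-algebras. Since $\gm^-$ is $\A$-linear and injective by Corollary~\ref{C:InjGmHom}, only surjectivity of $\gm^-$ over $\A$ remains. I would follow the strategy of \cite[Proposition~3.20]{KL1}: for each weight $\nu\in Q^+$ and each simple $\Hm(\nu)$-module $L$, express the projective indecomposable class $[P_L]$ as an $\A$-linear combination of the classes $\gm^-(\te_{\ui}^{(\uk)}) = [P_{\ui^{(\uk)}}]$. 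Using Proposition~\ref{P:Restriction} together with the Cartan pairing \eqref{E:Cartanpairing}, one establishes a unitriangular decomposition of each $[P_{\ui^{(\uk)}}]$ into projective indecomposables with respect to a suitable ordering on simples (the ordering coming from the combinatorics of the word $\ui$); unitriangular inversion then places $[P_L]$ in $\gm^-(\fam)$. A key input is the assertion that every simple $\Hm(\nu)$-module is of type $\texttt{M}$: otherwise a type~$\texttt{Q}$ simple $L$ with $\Pi L\cong L$ would yield a strict character inequality $\Qch[\Projm]_{\pi=-1} < \Qch[\Projm]_{\pi=1}$, contradicting the chain of equalities in Proposition~\ref{equa4chl}.

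\medskip

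\noindent\textbf{Part (b).} This is formal once (a) is in hand. By Proposition~\ref{P:InjGmHom}, $\gm^\pi$ is injective, so there is a short exact sequence of $\A^\pi$-modules
\[
0\longrightarrow \fap \xrightarrow{\gm^\pi} [\Projm]\longrightarrow C \longrightarrow 0.
\]
Applying $-\otimes_{\A^\pi}\A^\pi/(\pi-1) \cong -\otimes_{\A^\pi}\A$, and using the identifications $\fap/(\pi-1)\fap \cong \fa$ and $[\Projm]/(\pi-1)[\Projm] \cong [\Projm]_{\pi=1}$, right-exactness produces an exact sequence whose middle arrow is $\gm^+$; Theorem~\ref{T:Gm+iso} then gives $C/(\pi-1)C = 0$. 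Symmetrically, part~(a) gives $C/(\pi+1)C = 0$. Hence $C=(\pi-1)C=(\pi+1)C$. For any $x\in C$, write $x = (\pi+1)y$ with $y\in C$, and then $y = (\pi-1)z$ with $z\in C$; since $(\pi+1)(\pi-1) = \pi^2-1=0$ in $\A^\pi$, this forces $x=0$. Thus $C=0$ and $\gm^\pi$ is surjective.

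\medskip

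\noindent\textbf{Main obstacle.} The substantive difficulty is in part~(a): establishing the triangular decomposition of the induction products $[P_{\ui^{(\uk)}}]$ into projective indecomposables over $\A$, and ruling out type~$\texttt{Q}$ simples via the character identities of Lemma~\ref{lem:sameCh} and Proposition~\ref{equa4chl}. Once part~(a) is known, part~(b) is a short $\A^\pi$-module theoretic consequence that exploits the relation $\pi^2=1$ in a crucial way to annihilate the cokernel.
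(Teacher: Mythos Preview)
Your proposal is correct and follows essentially the same approach as the paper: part~(a) is reduced to the argument of \cite[Proposition~3.20]{KL1} once Corollary~\ref{cor:iso-} is in hand, and part~(b) is deduced from~(a) together with Proposition~\ref{P:InjGmHom} and Theorem~\ref{T:Gm+iso}. The paper leaves the deduction of~(b) entirely implicit, so your cokernel argument exploiting $(\pi-1)(\pi+1)=0$ is a clean and welcome way to fill in what the paper only gestures at; the one small caveat is that the identification $\fap/(\pi-1)\fap\cong\fa$ is not literally established in the paper, but your argument only needs the surjectivity of the induced map $\fap/(\pi-1)\fap\to[\Projm]_{\pi=1}$, which follows since this map factors through $\gm^+$.
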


\subsection{The type $\texttt{M}$ phenomenon}
From the proof of  Proposition~\ref{equa4chl}, the identity
$$
\Qch [\Projm]_{\pi=-1} =\Qch [\Projm]_{\pi=1}
$$
holds, and it implies (and is indeed equivalent to) the following
property of simple modules of a spin quiver Hecke algebra (which was
conjectured in \cite[page 3]{KKT}).

\begin{prp}
Each simple module of a spin quiver Hecke algebra is of type
$\texttt M$.
\end{prp}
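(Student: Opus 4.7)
The plan is to extract the statement directly from the chain of character (in)equalities assembled in the proof of Proposition~\ref{equa4chl}, where the identity
\[
\Qch [\Projm]_{\pi=-1}=\Qch [\Projm]_{\pi=1}
\]
was established. The proof then amounts to a careful bookkeeping of which indecomposable projectives survive each of the two specializations.

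First I would recall the standard dichotomy: a finite-dimensional simple supermodule $L$ is of type $\texttt M$ precisely when $\Pi L\not\cong L$, and of type $\texttt Q$ precisely when there is an even isomorphism $\Pi L\cong L$. The same dichotomy is inherited by the projective covers $P_L$, since $P_{\Pi L}\cong\Pi P_L$, so a type $\texttt Q$ simple forces an even isomorphism $\Pi P_L\cong P_L$.

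Next I would unpack the two specializations of $[\Projm]$. In $[\Projm]_{\pi=1}$, where $[\Pi M]=[M]$, the classes $[P_L]$ survive for every simple $L$ and together form an $\A$-basis indexed by isomorphism classes of simples modulo parity and grading shift. In $[\Projm]_{\pi=-1}$, where $[\Pi M]=-[M]$, any type $\texttt Q$ simple $L$ satisfies $[P_L]=[\Pi P_L]=-[P_L]$, hence $[P_L]=0$; the remaining classes, indexed by type $\texttt M$ simples, form a basis, with linear independence guaranteed by non-degeneracy of the Cartan pairing \eqref{E:Cartanpairing} against the corresponding simples in $[\Repm]$. Taking formal characters of these bases yields
\[
\Qch [\Projm]_{\pi=1}-\Qch [\Projm]_{\pi=-1}=\sum_{L\text{ of type }\texttt Q}\Qch P_L,
\]
summed over type $\texttt Q$ simples up to parity and grading shift.

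Combining this with the established character equality forces $\sum_{L\text{ type }\texttt Q}\Qch P_L=0$. Since each $\Qch P_L$ is a non-negative element in the partial order on characters recalled before Lemma~\ref{lem:sameCh}, the sum can vanish only if it is empty, so no type $\texttt Q$ simple exists. I do not anticipate a genuine obstacle here: every ingredient has already been proved in earlier sections, and the argument is essentially a dimension/character count. The only minor point requiring care is the verification that type $\texttt M$ projective indecomposables remain linearly independent after passing to $[\Projm]_{\pi=-1}$, which is handled cleanly by the Cartan pairing.
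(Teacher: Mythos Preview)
Your proposal is correct and follows essentially the same approach as the paper: the paper simply observes that the identity $\Qch [\Projm]_{\pi=-1}=\Qch [\Projm]_{\pi=1}$ established in Proposition~\ref{equa4chl} implies (indeed is equivalent to) the absence of type $\texttt Q$ simples, and you have spelled out precisely why---the difference of the two characters counts the type $\texttt Q$ simples, so it vanishes only when there are none. The bookkeeping you provide (that type $\texttt Q$ projective indecomposables die in $[\Projm]_{\pi=-1}$ while type $\texttt M$ ones remain independent via the Cartan pairing) is exactly the content implicit in the paper's one-line deduction.
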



\begin{thebibliography}{ABm}

\bibitem[BKM]{BKM} G.~Benkart, S.-J.~Kang and D.~Melville,
{\em Quantized enveloping algebras for Borcherds superalgebras},
Trans. AMS. {\bf 350} (1998), 3297--3319.

\bibitem[BK1]{BK}
J. ~Brundan and A.~Kleshchev, {\em Hecke-Clifford superalgebras,
crystals of type $A_{2l}^{(2)}$ and modular branching rules for
$\widehat{S}_n$}, Represent. Theory {\bf 5}  (2001), 317--403
(electronic).

\bibitem[BK2]{BK2}
J. ~Brundan and A.~Kleshchev, {\em Blocks of cyclotomic Hecke
algebras and Khovanov-Lauda algebras}, Invent. Math. {\bf 178}
(2009),  451--484.

\bibitem[BKW]{BKW}
J. ~Brundan, A.~Kleshchev and W.~Wang, {\em Graded Specht modules},
J. Reine Angew. Math. {\bf 655} (2011), 61--87.

\bibitem[BS]{BS} J.~Brundan and C.~Stroppel, {\em Highest weight
categories arising from Khovanov's diagram algebra III: Category
$\mathcal O$}, Repr. Theory {\bf 15} (2011), 170--243.

\bibitem[CKL]{CKL}
S. Cautis, J. Kamnitzer and A. Licata, \emph{Coherent sheaves and
categorical $sl_2$ actions},  Duke Math. J. {\bf 154} (2010),
135--179.

\bibitem[CR]{CR} J. Chuang and R. Rouquier,
{Derived equivalences for symmetric groups and
$sl_2$-categorification}, Ann. of Math. (2) {\bf 167} (2008),
245--298.

\bibitem[CW]{CW}  S.~Clark and W.~ Wang,
{\em  Canonical basis for quantum $\mathfrak{osp}(1|2)$}, Lett. Math. Phys. 103 (2013), 207--231.

\bibitem[EKL]{EKL} A.~Ellis, M.~Khovanov and A.~Lauda,
{\em The odd nilHecke algebra and its diagrammatics},
arXiv:1111.1320.

\bibitem[Gr]{Gr} N. Geer,
\emph{Etingof-Kazhdan quantization of Lie superbialgebras}. Adv.
Math. {\bf 207} (2006),  1--38.

\bibitem[HS]{HS} D. Hill, J. Sussan,
\emph{The Khovanov-Lauda 2-category and categorifications of a level
two quantum $sl_n$ representation}. Int. J. Math. Math. Sci. 2010,
Art. ID 892387, 34 pp..

\bibitem[HM]{HM}   J.~Hu and A.~ Mathas,
{\em Graded cellular bases for the cyclotomic
Khovanov-Lauda-Rouquier algebras of type $A$},  Adv. Math. {\bf 225}
(2010),  598--642.


\bibitem[Kac]{Kac}
V.~Kac, {\em Infinite-dimensional algebras, Dedekind's
$\eta$--function, classical M\"obius function and the very strange
formula}, Adv. Math. {\bf 30} (1978), 85--136.

\bibitem[KK]{KK} S.-J.~Kang and M.~Kashiwara,
{\em Categorification of highest weight modules via
Khovanov-Lauda-Rouquier algebras},  Invent. Math. 190 (2012), no. 3, 699–742.

\bibitem[KKO]{KKO} S.-J.~Kang, M.~Kashiwara and S.-J. Oh,
{\em Supercategorification of quantum Kac-Moody algebras}, arXiv:1206.5933.

\bibitem[KKT]{KKT} S.-J.~Kang, M.~Kashiwara and S.~Tsuchioka,
{\em Quiver Hecke superalgebras}, arXiv:1107.1039.

\bibitem[KOP]{KOP}  S.-J.~ Kang, S.-J.~ Oh and E.~ Park,
{\em Categorification of quantum generalized Kac-Moody algebras and
crystal bases}, arXiv:1102.5165.

\bibitem[KW]{KW} T.~Khongsap and W.~ Wang,
{\em Hecke-Clifford algebras and spin Hecke algebras I: the
classical affine type}, Transformation Groups {\bf 13} (2008),
389--412.

\bibitem[Kh1]{Kh1}M. Khovanov,
\emph{Categorifications from planar diagramatics}, Jpn. J. Math. 5 (2010), no. 2, 153–181.

\bibitem[Kh2]{Kh2} M.~Khovanov,
{\em How to categorify one-half of quantum $gl(1|2)$}, arXiv:1007.3517.

\bibitem[KL1]{KL1} M.~Khovanov and A.~Lauda,
{\em A diagrammatic approach to categorification of quantum groups I},
Represent. Theory {\bf 13} (2009), 309--347.

\bibitem[KL2]{KL2} M.~Khovanov and A.~Lauda,
{\em A diagrammatic approach to categorification of quantum groups
II}, Trans. AMS. {\bf 363} (2010), 2685--2700.

\bibitem[Kle1]{Kl} A.~Kleshchev, {\em Linear and projective representations of
symmetric groups}, Cambridge Tracts in Mathematics, {\bf 163}.
Cambridge University Press, Cambridge, 2005.

\bibitem[Kle2]{Kl2} A.~Kleshchev, {\em Representation theory of symmetric
groups and related Hecke algebras}, Bull. Amer. Math. Soc. (N.S.)
{\bf 47} (2010), 419--481.

 
\bibitem[La]{La} A. Lauda,
\emph{A categorification of quantum $\Sl(2)$}, Adv. Math. {\bf 225}
(2010), 3327--3424.

\bibitem[LV]{LV} A.~Lauda and M.~Vazirani,
{\em Crystals from categorified quantum groups}, Adv. Math. {\bf
228} (2011), 803--861.

\bibitem[vdL]{vdL} J. van de Leur,
  {\em A classification of contragredient Lie superalgebras of finite growth}, Comm. Algebra {\bf 17} (1989), 1815Ð-1841.
 

\bibitem[Lu1]{Lu1}
G.~Lusztig, {\em Quantum deformations of certain simple modules over
enveloping algebras}, Adv. Math. {\bf 70} (1988), 237--249.

\bibitem[Lu2]{Lu}
G.~Lusztig, {\em Introduction to quantum groups}, Birrh\"auser,
1994.


\bibitem[Naz] {Naz} M.~Nazarov,
{\em  Young's symmetrizers for projective
representations of the symmetric group}, Adv. Math. {\bf 127}
(1997), 190--257.

\bibitem[Ro1]{Ro1} R.~Rouquier,
{\em $2$-Kac-Moody algebras}, arxiv:0812.5023.

\bibitem[Ro2]{Ro2} R.~Rouquier,
{\em  Quiver Hecke algebras and 2-Lie algebras},  Algebra Colloq. 19 (2012), no. 2, 359–410.

\bibitem[Ts]{Ts} S.~Tsuchioka,
{\em Hecke-Clifford superalgebras and crystals of type
$D_{l}^{(2)}$}, Publ. RIMS {\bf 46}  (2010), 423--471.


\bibitem[VV]{VV} M.~Varagnolo and E.~Vasserot,
{\em Canonical bases and KLR algebras}, J. Reine Angew. Math. 659 (2011), 67–100.

\bibitem[Wa]{Wa} W.~ Wang,
\textit{Double affine Hecke algebras for the spin symmetric group},
Math. Res. Lett. {\bf 16} (2009), 1071--1085, arXiv:math/0608074.


\bibitem[Web]{Web} B.~Webster,
{\em Knot invariants and higher representation theory I:
diagrammatic and geometric categorification of tensor products},
arXiv:1001.2020.

\bibitem[Ya]{Ya} H.~Yamane,
{\em Quantized enveloping algebras associated with simple Lie
superalgebras and their universal R-matrices}, Publ. Res. Inst.
Math. Sci. {\bf 30} (1994), 15--87.
\end{thebibliography}
\end{document}